%
%
%
\documentclass[12pt]{amsart}
\usepackage{a4wide,enumerate,color}
\usepackage{xcolor}
\usepackage{bm} 
\usepackage{scrextend} 
\allowdisplaybreaks
\usepackage{enumitem}
\let\pa\partial  
\let\na\nabla  
\let\eps\varepsilon  
  
\newcommand{\R}{{\mathbb R}} 
\newcommand{\diver}{\operatorname{div}}  
\newcommand{\E}{{\mathcal E}}

\newcommand{\brho}{\bm{\rho}}

\newtheorem{theorem}{Theorem}   
\newtheorem{lemma}[theorem]{Lemma}   
\newtheorem{proposition}[theorem]{Proposition}   
\newtheorem{remark}[theorem]{Remark}   
  
\newtheorem{definition}{Definition}

 
\begin{document}  

\title[Euler flows for multicomponent systems]{High-friction limits of
Euler flows for multicomponent systems}

\author[X. Huo]{Xiaokai Huo}
\address{Computer, Electrical and Mathematical Science and Engineering Division,
King Abdullah University of Science and Technology (KAUST),
Thuwal 23955-6900, Saudi Arabia}
\email{xiaokai.huo@kaust.edu.sa}

\author[A. J\"ungel]{Ansgar J\"ungel}
\address{Institute for Analysis and Scientific Computing, Vienna University of  
	Technology, Wiedner Hauptstra\ss e 8--10, 1040 Wien, Austria}
\email{juengel@tuwien.ac.at} 

\author[A. Tzavaras]{Athanasios E. Tzavaras}
\address{Computer, Electrical and Mathematical Science and Engineering Division,
King Abdullah University of Science and Technology (KAUST),
Thuwal 23955-6900, Saudi Arabia}
\email{athanasios.tzavaras@kaust.edu.sa}

\date{\today}

\thanks{
Part of this manuscript was written during the stay of the second author at the 
King Abdullah University of Science and Technology (KAUST). He thanks KAUST for
the hospitality and support during his stay. 
Furthermore, he acknowledges partial support from   
the Austrian Science Fund (FWF), grants F65, P27352, P30000, and W1245} 

\begin{abstract}
The high-friction limit in Euler-Korteweg equations for fluid mixtures is
analyzed. The convergence of the solutions towards the zeroth-order limiting system
and the first-order correction is shown, assuming suitable uniform bounds.
Three results are proved: The first-order correction system is shown to be of
Maxwell-Stefan type and its diffusive part is parabolic in the sense of
Petrovskii. The high-friction limit towards the first-order Chapman-Enskog 
approximate system is proved in the weak-strong solution context for general 
Euler-Korteweg systems. Finally, the limit towards the
zeroth-order system is shown for smooth solutions in the isentropic case
and for weak-strong solutions in the Euler-Korteweg case.
These results include the case of constant capillarities and multicomponent
quantum hydrodynamic models.
\end{abstract}

\keywords{High-friction limit, relaxation limit, Euler-Korteweg equations,
Maxwell-Stefan systems, relative entropy method.}  
 
\subjclass[2010]{35L65, 35B40, 35K40}  

\maketitle


\section{Introduction}

Multicomponent flows appear in many applications including sedimentation,
dialysis, electrolysis, and ion transport \cite{WeKr99}. These flows may
be described by Euler or Euler-Korteweg 
equations for the various species, coupled through
interaction forces proportional to the difference of the partial velocities. 
The equations can be simplified when the interaction is strong, leading in
the zeroth-order limit to the Euler equations for the partial particle densities 
and the common velocity
and in the first-order correction to diffusive systems of Maxwell-Stefan type
coupled with the momentum balance equation for the barycentric velocity. 
While such relaxation and high-friction
limits are widely explored in mono-species situations, there are no results
for multicomponent Euler-Korteweg flows. The aim of this paper is to
compute the Chapman-Enskog expansion and 
to justify the expansion via a relative entropy approach, extending
results for the mono-species case to fluid mixtures \cite{GLT17,LaTz13,LaTz17}. 

We consider the following Euler-Korteweg equations for multicomponent fluids,
\begin{align}
  \pa_t\rho_i + \diver(\rho_i v_i) &= 0, \label{1.eq1} \\
  \pa_t(\rho_i v_i) + \diver(\rho_i v_i\otimes v_i)
	&= -\rho_i\na\frac{\delta\E}{\delta\rho_i}(\brho) 
	- \frac{1}{\eps}\sum_{j=1}^nb_{ij}\rho_i\rho_j(v_i-v_j), \label{1.eq2}
\end{align}
where $i=1,\ldots,n$, $x\in\R^3$, $t>0$, and $\brho =\brho(x,t)
= (\rho_1,\ldots,\rho_n)(x,t)$. The initial conditions are
$$
  \rho_i(\cdot,0)=\rho_{i}^0, \quad v_i(\cdot,0)=v_{i}^0 \quad\mbox{in }\R^3,\ 
	i=1,\ldots,n.
$$
The variables $\rho_i$ are the partial densities and $v_i$ the partial
velocities. The parameters $b_{ij}\ge 0$ model the interaction of the
$i$th and $j$th components with a strength that is measured by $\eps>0$.
Model \eqref{1.eq1}-\eqref{1.eq2} belongs to the general realm of multicomponent
fluid mixtures whose thermodynamical structure has been extensively analyzed;
see, e.g., \cite{BoDr15, MuRu93, RuSi09} and references therein.
On the other hand, we adopt the mathematical structure espoused in \cite{GLT17}, 
in that the dynamics of the flow is determined by the functional $\E(\brho)$ 
of potential energy, with $\delta\E/\delta\rho_i$ standing for the variational 
derivatives with respect to the partial densities $\rho_i$.
Several isothermal models fit into this framework.
In this work, we consider energies of the form
$$
  \E(\brho) = \int_{\R^3}\sum_{i=1}^n F_i(\rho_i,\na\rho_i)dx.
$$

For instance, when $F_i=h_i(\rho_i)$ for some (convex) function $h_i$ we obtain the 
equations of multicomponent system of gas dynamics with friction. When 
\begin{equation}\label{1.korte}
  F_i = h_i(\rho_i) + \frac12\kappa_i(\rho_i)|\na\rho_i|^2 \, ,
\end{equation}
we obtain  the multicomponent Euler-Korteweg system
$$
	\pa_t(\rho_iv_i) + \diver(\rho_iv_i\otimes v_i)
	=   \diver S_i[\rho_i]
  - \frac{1}{\eps}\sum_{j=1}^n b_{ij}\rho_i\rho_j(v_i-v_j),
$$
where
$$
  S_i[\rho_i] 
	:= \bigg(-p_i(\rho_i)-\frac12\big(\rho_i\kappa'_i(\rho_i)+\kappa_i(\rho_i)\big)
	|\na\rho_i|^2+\diver(\rho_i\kappa_i(\rho_i)\na\rho_i)
	\bigg){\mathbb I}
	- \kappa_i(\rho_i)\na\rho_i\otimes\na\rho_i
$$
is the stress tensor associated with the $i$th component and 
$p_i(\rho_i) = \rho_i h_i'(\rho_i)-h_i(\rho_i)$ is the partial pressure.
A special case is the selection $\kappa_i(\rho_i)= k_i/(4 \rho_i)$ with 
$k_i =\mbox{const.}$, which yields the multicomponent quantum hydrodynamic 
system with friction,
$$
	\pa_t(\rho_iv_i) + \diver(\rho_iv_i\otimes v_i) + \nabla p_i(\rho_i)
	=  \frac{1}{2} k_i \rho_i\na\bigg(\frac{\Delta\sqrt{\rho_i}}{\sqrt{\rho_i}}\bigg)
  - \frac{1}{\eps}\sum_{j=1}^n b_{ij}\rho_i\rho_j(v_i-v_j),
$$
used to describe quantum effects in semiconductors \cite{Jue09} or 
multicomponent quantum plasmas \cite{MSS08}.
The dependence of $F_i$ on the density (and its gradient) of the $i$th component
is crucial; the general case leads to mixed terms like $\pa F_i/\pa\rho_j$
that we cannot control.

The interaction term (the last term in \eqref{1.eq2}) has an alignment effect on the
partial velocities, and we expect that all partial velocities are the same
in the high-friction limit $\eps\to 0$, leading to the zeroth-order limit system
\begin{equation}\label{1.lim}
  \pa_t\bar\rho_i + \diver(\bar\rho_i\bar v) = 0, \quad
	\pa_t(\bar\rho\bar v) + \diver(\bar\rho\bar v\otimes\bar v)
	= -\sum_{i=1}^n\bar\rho_i\na\frac{\delta\E}{\delta\rho_i}(\bar\brho)
\end{equation}
for $i=1,\ldots,n$, where $\bar\brho = (\bar\rho_1,\ldots,\bar\rho_n)$, 
while $\bar\rho=\sum_{i=1}^n\bar\rho_i$ stands for the total density.
In the first-order correction, the solution  
$(\brho^\eps,\bm{v}^\eps)=(\rho_i^\eps,v_i^\eps)_{i=1,\ldots,n}$
to the hyperbolic relaxation system \eqref{1.eq1}-\eqref{1.eq2}
is expected to be close to the hyperbolic-diffusive system 
\begin{align}
  \pa_t\widehat\rho_i^\eps + \diver(\widehat\rho_i^\eps\widehat v^\eps)
	&= \eps\diver\sum_{j=1}^n D_{ij}^\eps(\widehat\brho^\eps)
	\na\frac{\delta\E}{\delta\rho_j}(\widehat\brho^\eps), \label{1.CE1} \\
	\pa_t(\widehat\rho^\eps\widehat v^\eps) + \diver(\widehat\rho^\eps
	\widehat v^\eps\otimes\widehat v^\eps) &= -\sum_{i=1}^n\widehat\rho_i^\eps
	\na\frac{\delta\E}{\delta\rho_i}(\widehat\brho^\eps) \label{1.CE2}
\end{align}
for $i=1,\ldots,n$, where $\widehat\brho^\eps = ( \widehat\rho^\eps_1,\ldots, 
\widehat\rho^\eps_n)$ and $\widehat\rho^\eps=\sum_{i=1}^n\widehat\rho_i^\eps$.
When the barycentric velocity $\widehat v^\eps$ vanishes, we recover the
Maxwell-Stefan equations analyzed in, e.g., \cite{Bot11,BGV17,JuSt13}.

Before stating our main results, we review the state of the art.
The structure of relaxation systems and their relaxation limits were first explored 
for examples \cite{CaPa79} and later for general systems 
\cite{Bou04,CLL94,Tza05,Yon04}. We call the limit $\eps\to 0$ a relaxation limit
if the time scale is of order $O(1/\eps)$.
Rigorous relaxation limits in the mono-species Euler equations 
towards the heat or porous-medium equation were proved, 
using energy estimates \cite{CoGo07}, the relative entropy approach \cite{LaTz13}, 
or convergence in Besov spaces \cite{XuKa14}. The relaxation limit
in non-isentropic flows was analyzed in, e.g., \cite{Wu16,XuYo09}.

When the potential energy $\E$ also depends on the gradient of the particle 
density, system \eqref{1.eq1}-\eqref{1.eq2} is of Euler-Korteweg type.
The relaxation (or high-friction) limit in these equations for 
single species was studied in
\cite{LaTz17} for monotone pressures (i.e.\ convex energies) and in 
\cite{GiTz17} for non-monotone pressures. 
Giesselmann et al.\ \cite{GLT17} proved stability theorems for the Euler-Korteweg system 
between a weak and a strong solution and for the Navier-Stokes-Korteweg system.

All these results concern the mono-species case. Relaxation limits in
multi-species systems were proved in the Euler-Poisson equations
for electrons and positively charged ions in plasmas or semiconductors
\cite{JuPe99}.  
At the zeroth order, such a limit leads to equations \eqref{1.lim}.
First-order corrections can be derived by a Chapman-Enskog expansion or
Maxwell-iteration technique. This was done in the Euler system with temperature
\cite{RuSi09}, leading to equations for multitemperature mixtures in
nonequilibrium thermodynamics. The Chapman-Enkog expansion was validated in
\cite{YaYo15,YYZ15} in the isentropic case, proving an error estimate for the
difference of the solutions of equations \eqref{1.eq1}-\eqref{1.eq2} and \eqref{1.CE1}-\eqref{1.CE2}. Another validation was recently presented
by Boudin et al.\ \cite{BGP18} by applying the formalism of Chen, Levermore, 
and Liu \cite{CLL94}. However, no results seem to be available in the literature
for high-friction limits in Euler-Korteweg systems. 

In this paper, we prove the convergence of solutions to \eqref{1.eq1}-\eqref{1.eq2}
towards the limit system \eqref{1.lim} and the first-order correction system
\eqref{1.CE1}-\eqref{1.CE2}. The main results can be sketched as follows:

\begin{labeling}{1a.}
\item[1.] We compute the Chapman-Enskog expansion leading to 
\eqref{1.CE1}-\eqref{1.CE2} and show that \eqref{1.CE1} has a gradient-flow structure
(Lemma \ref{lem.diff}). 
Moreover, when the barycentric velocity $\widehat v^\eps$ vanishes, the system 
is proved to be parabolic in the sense of Petrovskii (Lemma \ref{lem.Estar}).

\item[2.] \label{re2}
Assume that the functional \eqref{1.korte} satisfies some convexity conditions.
For weak solutions to the relaxation system \eqref{1.eq1}-\eqref{1.eq2}
and strong solutions to the approximate system \eqref{1.CE1}-\eqref{1.CE2}
with uniform bounds on the velocities, assuming that the difference of the
initial data is of order $O(\eps^2)$, we prove that 
\begin{align*}
  	\chi(t) :&= \int_{\R^3}\sum_{i=1}^n
	\bigg(\frac12\rho^\eps_i|v_i^\eps-\widehat v_i^\eps|^2
	+ (\rho_i^\eps-\widehat\rho_i^\eps)^2 
	+ \frac{1}{2\kappa_i(\rho_i^\eps)}|\kappa_i(\rho_i^\eps)\nabla \rho_i^\eps 
	- \kappa_i(\widehat \rho_i^\eps) \nabla\widehat \rho_i^\eps|^2 
	\bigg)(t)dx \\&\le C(\chi(0) + \eps^2)
\end{align*}  
uniformly in $t\in(0,T)$ for some constant $C>0$ independent of $\eps$, 
see Theorem \ref{thm.convCE}.

\item[3a.] Isentropic case: Smooth solutions to \eqref{1.eq1}-\eqref{1.eq2}
converge towards a smooth solution to the limit system \eqref{1.lim} in the sense
$$
  \sup_{0<t<T}\int_{\R^3}\sum_{i=1}^n\big((\rho_i^\eps-\bar\rho_i)^2
	+ |v_i^\eps-\bar v|^2\big)dx \to 0 \quad\mbox{as }\eps\to 0,
$$
if the initial relative entropy converges to zero; see Theorem \ref{thm.isen}.

\item[3b.] \label{re3a} Euler-Korteweg case with functional \eqref{1.korte}: 
Weak solutions to \eqref{1.eq1}-\eqref{1.eq2}
converge towards a strong solution to the limit system \eqref{1.lim} in the sense
\begin{align*}
    \chi(t) :&= \int_{\R^3}\sum_{i=1}^n
	\bigg(\frac12\rho^\eps_i|v_i^\eps-\bar v|^2
	+ (\rho_i^\eps-\bar\rho_i)^2 
	+ \frac{1}{2\kappa_i(\rho_i^\eps)}|\kappa_i(\rho_i^\eps)\nabla \rho_i^\eps 
	- \kappa_i(\bar \rho_i) \nabla \bar\rho_i|^2\bigg)(t)dx \\
	&\le C(\chi(0) + \eps)
\end{align*} 
uniformly in $t\in(0,T)$ for some constant $C>0$ independent of $\eps$; 
see Theorem \ref{thm.korte}. 
\end{labeling}

For these results, we need that the functions $\rho_i^\eps$ are uniformly bounded 
away from vacuum as well as $h_i$ and $-1/\kappa_i$ are convex. 
The case of the multicomponent quantum hydrodynamic system and the system with 
constant capillarities are included.

The idea of the proofs is to estimate the relative entropy between two solutions
$$
  \E_{\rm tot}(\brho,\bm{m}|\widehat\brho,\widehat{\bm{m}})(t) =
	\int_{\R^3}\sum_{i=1}^n\bigg(F_i(\rho_i,\na\rho_i|\widehat\rho_i,
	\na\widehat\rho_i) + \frac12\rho_i|v_i-\widehat v_i|^2\bigg)(t)dx,
$$
where $\bm{m} = (m_1,\ldots,m_n)$ with $m_i=\rho_i v_i$, 
$\widehat{\bm{m}} = (\widehat m_1,\ldots,\widehat m_n)$ with 
$\widehat m_i=\widehat \rho_i \widehat v_i$, and 
$F_i(\rho_i,\na\rho_i|\widehat\rho_i,\na\widehat\rho_i)$
is the relative potential energy density, defined by
$$
  F_i(\rho_i,\na\rho_i|\widehat\rho_i,\na\widehat\rho_i)
	= F_i - \widehat F_i - \frac{\pa\widehat F_i}{\pa\rho_i}(\rho_i-\widehat\rho_i)
	- \frac{\pa \widehat F_i}{\pa \na \rho_i}\cdot\na(\rho_i-\widehat\rho_i),
$$
with $F_i=F_i(\rho_i,\na\rho_i)$ and 
$\widehat F_i=F_i(\widehat\rho_i,\na\widehat\rho_i)$. This functional
satisfies a relative entropy inequality, proved in Proposition \ref{prop.rei}
for solutions to \eqref{1.eq1}-\eqref{1.eq2} and \eqref{1.CE1}-\eqref{1.CE2}
and in Proposition \ref{prop.rei2} for solutions to \eqref{1.eq1}-\eqref{1.eq2} 
and \eqref{1.lim}. The relative entropy approach has the advantage of being
very elementary and to be able to treat weak solutions to the original system
\cite{GLT17,LaTz17}.

For the proof of the high-friction limit in the isentropic case, we apply
the general relaxation result in \cite{Tza05} which is also based on the
relative entropy approach. We show that the framework is sufficiently general
to include multicomponent Euler flows with friction.

The paper is organized as follows. The formal Chapman-Enskog expansion
as well as the proof of parabolicity of the first-order correction system
are performed in section \ref{sec.formal}. Section \ref{sec.rigorCE} is
devoted to the rigorous proof of the Chapman-Enskog expansion in the
Euler-Korteweg case. The high-friction limit in both the isentropic and 
Euler-Korteweg case is shown in section \ref{sec.relax}.


\section{Formal asymptotics}\label{sec.formal}

In this section we perform a Chapman-Enskog asymptotic analysis to system 
\eqref{1.eq1}-\eqref{1.eq2} as $\eps \to 0$.
As a preparation, we analyze the solvability properties of the linear system 
\begin{equation}\label{1.linnh}
  \sum_{j=1}^n b_{ij}\rho_i\rho_j(v_i-v_j) = d_i , \quad i=1,\ldots,n \, ,
\end{equation}
and the associated homogeneous system
\begin{equation}\label{1.lin}
  \sum_{j=1}^n b_{ij}\rho_i\rho_j(v_i-v_j) = 0, \quad i=1,\ldots,n.
\end{equation}

The key hypothesis for \eqref{1.lin}, to be assumed
in the whole manuscript, reads as
\begin{enumerate}[label=\bf (N)]
\item \label{N} Let $(b_{ij})\in\R^{n\times n}$ be a symmetric matrix with 
nonnegative coefficients, $b_{i j} \ge 0$. For any $\rho_1,\ldots,\rho_n>0$, system
\eqref{1.lin} has the one-dimensional null space $\operatorname{span}\{\mathbf{1}\}$,
where $\mathbf{1}=(1,\ldots,1)\in\R^n$.
\end{enumerate}

By setting $B_{i j} = b_{i j} \rho_i \rho_j$, we rewrite \eqref{1.lin} in the form
\begin{equation}\label{1p.lin}
  \sum_{j=1}^n B_{ij} (v_i - v_j  ) = 0, \quad i=1,\ldots,n.
\end{equation}
If the coefficients $B_{i j}$ are symmetric and strictly positive, $B_{ij} > 0$ for  $i \ne j$,
then hypothesis \ref{N} is automatically satisfied. Indeed, due to
the symmetry of $(B_{ij})$,
\begin{align*}
   \sum_{i,j=1}^n B_{ij}(v_i-v_j)\cdot v_i 
	&= \frac12\sum_{i,j=1}^n B_{ij}(v_i-v_j)\cdot v_i
	+ \frac12\sum_{i,j=1}^n B_{ji}(v_j-v_i)\cdot v_j \\
	&= \frac12\sum_{i,j=1}^n B_{ij}|v_i-v_j|^2.
\end{align*}
If \eqref{1p.lin} is satisfied, it follows that $v_i=v_j$ for all $i\neq j$, 
and the null space of system \eqref{1.lin} is the linear span of the vector $\mathbf{1}$.
This conclusion cannot be guaranteed if some $b_{ij}$ vanish, which makes 
necessary assumption \ref{N}. The assumption guarantees that there are no 
extraneous conservation laws associated to the frictional coefficients $b_{ij}$, 
beyond the conservation of mass and total momentum.


\subsection{Solution of a linear system}\label{sec.lin}

In the sequel,  we will need to solve the linear system
\begin{equation}\label{2.lin}
  -\sum_{j=1}^n b_{ij}\rho_i\rho_j(u_i-u_j) = d_i\quad\mbox{for }i=1,\ldots,n,
	\quad\mbox{subject to }\sum_{i=1}^n \rho_iu_i=0.
\end{equation}
We give a semi-explicit solution to such systems, 
recalling the notation $B_{ij}=b_{ij}\rho_i\rho_j$.

\begin{lemma}\label{lem.linsys}
Let $d_1,\ldots,d_n\in\R^3$ satisfy $\sum_{i=1}^n d_i=0$, $\rho_1,\ldots,\rho_n>0$,
and $(B_{ij})\in\R^{n\times n}$ be a symmetric matrix satisfying $B_{ij}\ge 0$ for all
$i,j=1,\ldots,n$. We suppose that all solutions to the homogeneous system
\begin{equation}\label{2.hom}
  \sum_{j=1}^n B_{ij}(u_i-u_j)=0, \quad i=1,\ldots,n, 
\end{equation}
lie in the space $\operatorname{span}\{\mathbf{1}\}$. Then the system
\begin{equation}\label{2.linB}
  -\sum_{j=1}^n B_{ij}(u_i-u_j) = d_i\quad\mbox{for }i=1,\ldots,n,
	\quad\mbox{subject to }\sum_{i=1}^n \rho_iu_i=0,
\end{equation}
has the unique solution 
\begin{equation}\label{2.sol}
  \rho_iu_i = -\sum_{j,k=1}^{n-1}\bigg(\delta_{ij}\rho_i-\frac{\rho_i\rho_j}{\rho}
	\bigg)\tau^{-1}_{jk}d_k, \quad 
	\rho_n u_n =-\sum_{j=1}^{n-1}\rho_ju_j,
\end{equation}
where $i=1,\ldots,n$, $\rho=\sum_{i=1}^n \rho_i>0$ and 
$(\tau_{ij}^{-1})\in\R^{(n-1)\times(n-1)}$ 
is the inverse of a regular submatrix, obtained from reordering the matrix
$(\tau_{ij})\in\R^{n\times n}$ of rank $n-1$ with coefficients
$$
  \tau_{ij} = \delta_{ij}\sum_{k=1}^n B_{ik} - B_{ij}, \quad i,j=1,\ldots,n.
$$
\end{lemma}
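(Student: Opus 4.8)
The plan is to recognize \eqref{2.linB} as the linear system $\tau u=-d$ with $\tau=(\tau_{ij})$, $\tau_{ij}=\delta_{ij}\sum_{k=1}^n B_{ik}-B_{ij}$, since $\sum_{j=1}^n\tau_{ij}u_j=\sum_{j=1}^n B_{ij}(u_i-u_j)$, and then to settle solvability abstractly before producing the closed form; as the system decouples over the three Cartesian components of the $\R^3$-valued unknowns, it suffices to treat scalar data. The matrix $\tau$ is symmetric (because $(B_{ij})$ is) and has vanishing row and column sums, so $\mathbf 1\in\ker\tau$; by the hypothesis on \eqref{2.hom}, $\ker\tau=\operatorname{span}\{\mathbf 1\}$, whence $\operatorname{rank}\tau=n-1$ and, by symmetry, $\operatorname{ran}\tau=(\ker\tau)^\perp=\{d\in\R^n:\sum_i d_i=0\}$.

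For existence and uniqueness I would use that, because $\rho=\sum_i\rho_i>0$, the hyperplane $V=\{u:\sum_i\rho_iu_i=0\}$ does not contain $\mathbf 1$ and hence is a complement of $\ker\tau$ in $\R^n$, so $\tau|_V\colon V\to\operatorname{ran}\tau$ is an isomorphism. Since $\sum_i d_i=0$ we have $-d\in\operatorname{ran}\tau$, and therefore there is exactly one $u\in V$ with $\tau u=-d$. This already proves the unique solvability of \eqref{2.linB}, and it remains only to identify this $u$ with the right-hand side of \eqref{2.sol}.

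To obtain the closed form, the first point is that a principal $(n-1)\times(n-1)$ block of $\tau$ is invertible. I would argue via the adjugate: from $\tau\,\operatorname{adj}\tau=\det(\tau)I=0=(\operatorname{adj}\tau)\,\tau$, both the columns and the rows of $\operatorname{adj}\tau$ lie in $\ker\tau=\operatorname{span}\{\mathbf 1\}$, so $\operatorname{adj}\tau=c\,\mathbf 1\mathbf 1^{\top}$ with $c\neq 0$, because $\operatorname{rank}(\operatorname{adj}\tau)=1$ when $\operatorname{rank}\tau=n-1$. Thus every principal $(n-1)$-minor of $\tau$ equals $c\neq 0$; after relabelling the components if necessary we may assume that $(\tau_{ij})_{i,j=1}^{n-1}$ is regular, with inverse $(\tau_{ij}^{-1})_{i,j=1}^{n-1}$. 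Now set $w_i=-\sum_{k=1}^{n-1}\tau_{ik}^{-1}d_k$ for $i\le n-1$, $w_n=0$, $c=-\rho^{-1}\sum_{i=1}^{n-1}\rho_iw_i$, and $u_i=w_i+c$. Then $\sum_{j=1}^{n-1}\tau_{ij}w_j=-d_i$ for $i\le n-1$, and since $w_n=0$ and the rows of $\tau$ sum to zero, $\sum_{j=1}^n\tau_{ij}u_j=\sum_{j=1}^n\tau_{ij}w_j=-d_i$ for $i\le n-1$, while the $n$-th equation follows from $\sum_i(\tau u)_i=0$ and $\sum_id_i=0$; the side constraint holds since $\sum_i\rho_iu_i=\sum_{i=1}^{n-1}\rho_iw_i+c\rho=0$ by the choice of $c$. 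By the uniqueness established above this $u$ is the solution of \eqref{2.linB}, and substituting the value of $c$ gives, for $i\le n-1$, $\rho_iu_i=-\rho_i\sum_{k}\tau_{ik}^{-1}d_k+\rho^{-1}\rho_i\sum_{j,k}\rho_j\tau_{jk}^{-1}d_k=-\sum_{j,k=1}^{n-1}(\delta_{ij}\rho_i-\rho_i\rho_j/\rho)\tau_{jk}^{-1}d_k$, whereas $\rho_nu_n=-\sum_{i=1}^{n-1}\rho_iu_i$ is precisely the constraint; this is \eqref{2.sol}.

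The only step that goes beyond routine linear-algebraic bookkeeping is the rank-one adjugate identity guaranteeing invertibility of the principal $(n-1)\times(n-1)$ block (equivalently, that no genuine reordering beyond relabelling is needed); the splitting $\R^n=V\oplus\operatorname{span}\{\mathbf 1\}$, the identification of $\operatorname{ran}\tau$, and the direct verification of \eqref{2.sol} are all straightforward.
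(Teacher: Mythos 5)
Your proof is correct, but it takes a genuinely different route from the paper's. The paper proceeds by direct elimination: it rewrites the system in the differences $u_j-u_n$, uses the constraint to express $u_n$ through $u_1,\ldots,u_{n-1}$, and thereby reduces everything to the invertible product $\tau Q$ with $Q_{ij}=\delta_{ij}\rho_j^{-1}+\rho_n^{-1}$, whose inverse it computes explicitly; the formula \eqref{2.sol} then appears as $\rho_iu_i=-\sum_{j,k}Q^{-1}_{ij}\tau^{-1}_{jk}d_k$. You instead settle existence and uniqueness abstractly first (the splitting $\R^n=V\oplus\operatorname{span}\{\mathbf 1\}$ together with $\operatorname{ran}\tau=(\ker\tau)^\perp$), and only afterwards exhibit a particular solution $w$ of the reduced $(n-1)\times(n-1)$ system shifted by a multiple of $\mathbf 1$ to meet the constraint, identifying it with \eqref{2.sol} by uniqueness. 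Two remarks on what each approach buys. Your adjugate argument ($\operatorname{adj}\tau=c\,\mathbf 1\mathbf 1^{\top}$ with $c\neq0$) is a genuine sharpening: it shows that \emph{every} principal $(n-1)\times(n-1)$ block of $\tau$ is invertible, so the paper's caveat ``possibly after reordering'' is unnecessary; this is worth knowing. On the other hand, the paper's $Q$-matrix computation is not wasted effort there: the explicit factorization $\widetilde D=Q^{-1}\tau^{-1}Q^{-1}$ and the identity \eqref{eq:taucal} are reused verbatim in Lemma \ref{lem.diff} and in the proof of hypothesis (h7) in Theorem \ref{thm.isen}, so the paper's route front-loads structure that is needed later, whereas yours is the leaner path to this lemma in isolation. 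One cosmetic point: you use the letter $c$ both for the adjugate constant and for the shift constant; rename one of them.
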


\begin{proof}
We proceed similarly as in \cite[Section 4]{YYZ15}. The idea is to formulate
the linear system in $n-1$ equations and to invert the resulting linear system
semi-explicitly. First, we notice that we can write \eqref{2.hom} as
$$
\sum_{j=1}^n \tau_{ij}u_j=0  \quad \mbox{ for }i=1,\ldots,n,
$$
where
\begin{equation}\label{eq:taudef}
 	\tau_{ij}=-b_{ij}\rho_i\rho_j\mbox{ for } i \ne j\quad\text{and}\quad
 \tau_{ii}= -\sum_{j=1,\,j\neq i}^n \tau_{ij}.
\end{equation}  
Since we assumed that all solutions to this system lie in the space
$\operatorname{span}\{\mathbf{1}\}$, the matrix $(\tau_{ij})\in\R^{n\times n}$
has rank $n-1$. Thus, there exists an invertible submatrix 
$\tau=(\tau_{ij})\in\R^{(n-1)\times(n-1)}$ (possibly after reordering of the indices).

The linear system \eqref{2.linB} can be formulated in terms of the first $n-1$
variables. Indeed, since $\sum_{j=1}^n\tau_{ij}=0$, we find that
$$
  -d_i = \sum_{j=1}^n\tau_{ij} u_j 
	= \sum_{j=1}^{n-1}\tau_{ij} u_j + \tau_{in}u_n
	= \sum_{j=1}^{n-1}\tau_{ij}u_j - \sum_{j=1}^{n-1}\tau_{ij}u_n
	= \sum_{j=1}^{n-1}\tau_{ij}(u_j-u_n).
$$
Using the property $\rho_nu_n=-\sum_{k=1}^{n-1}\rho_ku_k$, it follows that
\begin{align}
  -d_i = \sum_{j=1}^{n-1}\tau_{ij}\bigg(u_j + \frac{1}{\rho_n}\sum_{k=1}^{n-1}
	\rho_ku_k\bigg)
	= \sum_{j,k=1}^{n-1}\tau_{ij}\bigg(\frac{1}{\rho_j}\delta_{jk} + \frac{1}{\rho_n}
	\bigg)\rho_ku_k 
	= \sum_{j,k=1}^{n-1}\tau_{ij}Q_{jk}\rho_ku_k,  \label{eq:taucal}
\end{align}  
where $Q_{ij}=\delta_{ij}\rho_j^{-1}+\rho_n^{-1}$ for $i,j=1,\ldots,n-1$.

The matrix $Q=(Q_{ij})\in\R^{(n-1)\times(n-1)}$
is invertible with inverse $(Q_{ij}^{-1})$, where
$Q_{ij}^{-1}=\delta_{ij}\rho_j-\rho_i\rho_j/\rho$ for $i,j=1,\ldots,n-1$.
Indeed, a straightforward computation shows that
\begin{align*}
  \sum_{k=1}^{n-1}Q_{ik}Q^{-1}_{kj}
	&= \sum_{k=1}^{n-1}\bigg(\frac{1}{\rho_k}\delta_{ik} + \frac{1}{\rho_n}\bigg)
	\bigg(\delta_{kj}\rho_j-\frac{\rho_k\rho_j}{\rho}\bigg) \\
  &= \delta_{ij} + \frac{\rho_j}{\rho_n} - \frac{\rho_j}{\rho}
	-  \frac{\rho_j}{\rho_n\rho}  \sum_{k=1}^{n-1}\rho_k
	= \delta_{ij}, \\
	\sum_{k=1}^{n-1}Q^{-1}_{ik}Q_{kj}
	&= \sum_{k=1}^{n-1}\bigg(\delta_{ik}\rho_k-\frac{\rho_i\rho_k}{\rho}\bigg)
	\bigg(\frac{1}{\rho_k}\delta_{kj} + \frac{1}{\rho_n}\bigg) \\
	&= \delta_{ij} - \frac{\rho_i}{\rho} + \frac{\rho_i}{\rho_n}
	-  \frac{\rho_i}{\rho\rho_n}  \sum_{k=1}^{n-1}\rho_k
	= \delta_{ij}.
\end{align*}

Thus, the matrix product $\tau Q$ is invertible with inverse $Q^{-1}\tau^{-1}$,
and we infer that
$$
  \rho_i u_i = -\sum_{j,k=1}^{n-1}Q_{ij}^{-1}\tau_{jk}^{-1}d_k
	= -\sum_{j,k=1}^{n-1}\bigg(\delta_{ij}\rho_i-\frac{\rho_i\rho_j}{\rho}\bigg)
  \tau^{-1}_{jk}d_k, \quad i=1,\ldots,n-1.
$$
This ends the proof.
\end{proof}


\subsection{Formal derivation of the Chapman-Enskog expansion}\label{sec.CE}

We perform a formal Chapman-Enskog expansion of \eqref{1.eq1}-\eqref{1.eq2}
in the high-friction regime, i.e.\ for small $\eps>0$.
We introduce the moments
$$
  \rho = \sum_{i=1}^n\rho_i, \quad \rho v = \sum_{i=1}^n\rho_iv_i,
$$
and the relative velocities $u_i=v_i-v$ for $i=1,\ldots,n$. This corresponds
to a change of variables $(v_1,\ldots,v_n)\mapsto(v,u_1,\ldots,u_n)$.
Then system \eqref{1.eq1}-\eqref{1.eq2} becomes
\begin{align}
  \pa_t\rho_i + \diver(\rho_iu_i + \rho_i v) &= 0, \label{2.u1} \\
  \pa_t(\rho_iu_i+\rho_iv) + \diver\big(\rho_i(u_i+v)\otimes(u_i+v)\big)
	&= -\rho_i\na\frac{\delta\E}{\delta\rho_i}(\brho) - \frac{1}{\eps}\sum_{j=1}^n
	b_{ij}\rho_i\rho_j(u_i-u_j), \label{2.u2}
\end{align}
subject to the constraint
\begin{equation}\label{2.constr}
  \sum_{i=1}^n \rho_iu_i = \sum_{i=1}^n\rho_i(v_i-v) 
	= \sum_{i=1}^n\rho_i v_i - \rho v = 0.
\end{equation}

The objective is to derive an effective equation in the spirit of the
Chapman-Enskog expansion for the high-friction dynamics of system
\eqref{2.u1}-\eqref{2.u2} subject to \eqref{2.constr}. For this, we introduce
the Hilbert expansion
\begin{align}
  \rho_i &= \rho_i^0 + \eps\rho_i^1+\eps^2\rho_i^2 + O(\eps^3), \nonumber \\
	u_i &= u_i^0 + \eps u_i^1 + \eps^2 u_i^2 + O(\eps^3), \label{2.hilbert} \\
	v &= v^0 + \eps v^1 + O(\eps^2). \nonumber
\end{align}
Inserting this expansion into $\rho=\sum_{i=1}^n\rho_i$, we find that
\begin{equation}\label{2.rho}
  \rho = \rho^0 + \eps\rho^1 + O(\eps^2), \quad\mbox{where }
	\rho^0 := \sum_{i=1}^n\rho_i^0,\ \rho^1 := \sum_{i=1}^n\rho_i^1,
\end{equation}
and the constraint \eqref{2.constr} leads to
$$
  0 = \sum_{i=1}^n\rho_iu_i 
	= \sum_{i=1}^n\rho_i^0 u_i^0 + \eps\sum_{i=1}^n\big(\rho_i^0u_i^1+\rho_i^1u_i^0\big)
	+ O(\eps^2).
$$
Equating terms of the same order gives
\begin{equation}\label{2.zero}
  \sum_{i=1}^n\rho_i^0 u_i^0=0, \quad
	\sum_{i=1}^n\big(\rho_i^0u_i^1+\rho_i^1u_i^0\big)=0.
\end{equation}

Next, we insert the Hilbert expansion \eqref{2.hilbert} into system 
\eqref{2.u1}-\eqref{2.u2} and identify terms of the same order: \\
$\bullet$ Terms of order $O(1/\eps)$:
\begin{equation}\label{2.H-1}
  \sum_{j=1}^n b_{ij}\rho_i^0\rho_j^0(u_i^0-u_j^0) = 0, \quad i=1,\ldots,n.
\end{equation}
$\bullet$ Terms of order $O(1)$:
\begin{align}
   &\pa_t\rho_i^0  + \diver(\rho_i^0 u_i^0+\rho_i^0v^0) = 0, 
	\label{2.H01} \\
   &\pa_t(\rho_i^0u_i^0+\rho_i^0v^0)
	+ \diver\big(\rho_i^0(u_i^0+v^0)\otimes(u_i^0+v^0)\big) \label{2.H02} \\
	&\qquad= -\rho_i^0\na\frac{\delta\E}{\delta\rho_i}(\brho^0)
	- \sum_{j=1}^nb_{ij}\rho_i^0\rho_j^0(u_i^1-u_j^1)
	- \sum_{j=1}^nb_{ij}(\rho_i^1\rho_j^0+\rho_i^0\rho_j^1)(u_i^0-u_j^0).
	\nonumber
\end{align}
$\bullet$ Terms of order $O(\eps)$: 
\begin{align}
  & \pa_t\rho_i^1 + \diver\big(\rho_i^1(u_i^0+v^0) + \rho_i^0(u_i^1+v^1)\big) = 0,
	 \label{2.H11} \\
	& \pa_t\big(\rho_i^1(u_i^0+v^0) + \rho_i^0(u_i^1+v^1)\big) 
	+ \diver\big(\rho_i^1(u_i^0+v^0)\otimes(u_i^0+v^0) \label{2.H12} \\
	&\phantom{xxxx}{}
	+ \rho_i^0(u_i^1+v^1)\otimes(u_i^0+v^0) + \rho_i^0(u_i^0+v^0)\otimes(u_i^1+v^1)\big)
	\nonumber \\
	&\phantom{xx}{}= -\rho_i^1\na\frac{\delta\E}{\delta\rho_i}(\brho^0)
	- \rho_i^0\na\bigg(\sum_{j=1}^n\frac{\delta^2\E}{\delta\rho_i\delta\rho_j}
	(\brho^0)\rho_j^1\bigg) \nonumber \\
	&\phantom{xxxx}{}- \sum_{j=1}^n b_{ij}\Big(\rho_i^0\rho_j^0(u_i^2-u_j^2)
	+ (\rho_i^1\rho_j^0+\rho_i^0\rho_j^1)(u_i^1-u_j^1) \nonumber \\
	&\phantom{xxxx}{}
	+ (\rho_i^0\rho_j^2+\rho_i^1\rho_j^1+\rho_i^2\rho_j^0)(u_i^0-u_j^0)\Big). \nonumber 
\end{align}

First, we consider equations \eqref{2.H-1} of order $O(1/\eps)$. By
assumption \ref{N} on page \pageref{N}, the first constraint in \eqref{2.zero}, 
and Lemma \ref{lem.linsys}, we deduce that $u_i^0=0$ for
$i=1,\ldots,n$, which simplifies equations \eqref{2.H01}-\eqref{2.H12}. 
Then, summing \eqref{2.H02} from $i=1,\ldots,n$ and using the symmetry of
$(b_{ij})$, $(\rho_1^0,\ldots,\rho_n^0,v^0)$ 
can be determined by solving the closed system
\begin{align}
  \pa_t\rho_i^0 + \diver(\rho_i^0 v^0) &= 0, \label{2.I01} \\
	\pa_t \bigg(\sum_{i=1}^n\rho_i^0v^0\bigg) 
	+ \diver\bigg(\sum_{i=1}^n\rho_i^0v^0\otimes v^0\bigg) 
	&= -\sum_{i=1}^n\rho_i^0\na\frac{\delta\E}{\delta\rho_i}(\brho^0). \label{2.I02}
\end{align}
It follows from \eqref{2.H02} that $u_1^1,\ldots,u_n^1$ satisfy the linear system
\begin{align}
  & -\sum_{j=1}^n b_{ij}\rho_i^0\rho_j^0(u_i^1-u_j^1) = d_i^0, \label{2.linsys2} \\
  & \mbox{where}\quad d_i^0 = \pa_t(\rho_i^0v^0) + \diver(\rho_i^0v^0\otimes v^0)
	+ \rho_i^0\na\frac{\delta\E}{\delta\rho_i}(\brho^0). \nonumber
\end{align}
Since $u_i^0=0$, the second constraint in \eqref{2.zero} 
becomes $\sum_{i=1}^n\rho_i^0 u_i^1=0$.
Moreover, \eqref{2.I02} is equivalent to $\sum_{i=1}^n d_i^0=0$, which ensures
the solvability of \eqref{2.linsys2}. By Lemma \ref{lem.linsys}, there exists a
unique solution $(u_1^1,\ldots,u_n^1)$ to \eqref{2.linsys2}. 

Next, we focus on the terms \eqref{2.H11}-\eqref{2.H12} of order $O(\eps)$.
We rewrite these equations using $u_i^0=0$ and the constraint 
$\sum_{i=1}^n\rho_i^0 u_i^1=0$ as
\begin{align}
  & \pa_t\rho_i^1 + \diver(\rho_i^1v^0 + \rho_i^0v^1) = -\diver(\rho_i^0u_i^1), 
	\label{2.I11} \\
	& \pa_t\bigg(\sum_{i=1}^n\rho_i^1v^0 + \sum_{i=1}^n\rho_i^0v^1\bigg)
	+ \diver\bigg(\sum_{i=1}^n\rho_i^1 v^0\otimes v^0
	+ \sum_{i=1}^n\rho_i^0(v^1\otimes v^0+v^0\otimes v^1)\bigg) \label{2.I12} \\
	&\phantom{xx}{}= -\sum_{i=1}^n\bigg\{\rho_i^1\na\frac{\delta\E}{\delta\rho_i}(\brho^0)
	+ \rho_i^0\na\bigg(\sum_{j=1}^n\frac{\delta^2\E}{\delta\rho_i\delta\rho_j}
	(\brho^0)\rho_j^1\bigg)\bigg\}. \nonumber
\end{align}
This is a closed system providing $(\rho_1^1,\ldots,\rho_n^1,v^1)$. 

The last task is to reconstruct the effective equations that are valid
asymptotically up to order $O(\eps^2)$. We are adding \eqref{2.I01} and
$\eps$ times \eqref{2.I11} as well as \eqref{2.I02} and 
$\eps$ times \eqref{2.I12}:
\begin{align*}
  &\pa_t(\rho_i^0+\eps\rho_i^1) + \diver\big(\rho_i^0v^0
	+ \eps(\rho_i^1v^0+\rho_i^0v^1)\big) = -\eps\diver(\rho_i^0 u_i^1), \\
  & \pa_t\big(\rho^0v^0 + \eps(\rho^1v^0+\rho^0v^1)\big)
	+ \diver\big({\rho}^0v^0\otimes v^0 + \eps(\rho^1v^0\otimes v^0
	+ \rho^0 v^1\otimes v^0 + \rho^0v^0\otimes v^1)\big) \\
	&\phantom{xx}{}= -\sum_{i=1}^n\rho_i^0\na\frac{\delta\E}{\delta\rho_i}(\brho^0)
	- \eps\sum_{i=1}^n\bigg\{\rho_i^1\na\frac{\delta\E}{\delta\rho_i}(\brho^0)
	+ \rho_i^0\na\bigg(\sum_{j=1}^n\frac{\delta^2\E}{\delta\rho_i\delta\rho_j}
	(\brho^0)\rho_j^1\bigg)\bigg\},
\end{align*}
where $\rho^0$ and $\rho^1$ are defined in \eqref{2.rho}. With the notation
\begin{align*}
  & \rho_i^\eps = \rho_i^0 + \eps\rho_i^1 + O(\eps^2), \quad 
	u_i^\eps = u_i^0 + \eps u_i^1 +O(\eps^2), \\
	& v^\eps = v^0 + \eps v^1 + O(\eps^2), \quad 
	\rho^\eps = \sum_{i=1}^n\rho_i^\eps,
\end{align*}
and recalling that $u_i^0=0$, we infer that 
$(\rho_1^\eps,\ldots,\rho_n^\eps,v^\eps)$ satisfies
\begin{align*}
  \pa_t\rho_i^\eps + \diver(\rho_i^\eps v^\eps) 
	&= -\diver(\rho_i^\eps u_i^\eps) + O (\eps^2),
	\\	
  \pa_t(\rho^\eps v^\eps) + \diver(\rho^\eps v^\eps\otimes v^\eps)
	&= -\sum_{i=1}^n\rho_i^\eps\na\frac{\delta\E}{\delta\rho_i}(\brho^\eps) + O(\eps^2).
\end{align*}

It remains to reconstruct the formula determining $(u_1^\eps,\ldots,u_n^\eps)$.
We deduce from \eqref{2.linsys2} that
\begin{equation}\label{2.d}
  -\sum_{j=1}^n b_{ij}\rho_i^\eps\rho_j^\eps(u_i^\eps-u_j^\eps)
	= -\eps\sum_{j=1}^n b_{ij}\rho_i^0\rho_j^0(u_i^1-u_j^1) + O(\eps^2)
	= \eps d_i^0 + O(\eps^2).
\end{equation}
The variables $d_i^0$ can be expressed in terms of $\brho^0$ only. Indeed,
since $\pa_t\rho_i^0+\diver(\rho_i^0v^0)=0$ and $\pa_t\rho^0+\diver(\rho^0v^0)=0$,
it follows that
\begin{align*}
  d_i^0 &= \big(\pa_t\rho_i^0+\diver(\rho_i^0v^0)\big)v^0
	+ \rho_i^0\big(\pa_t v^0 + v^0\cdot\na v^0\big)
	+ \rho_i^0\na\frac{\delta\E}{\delta\rho_i}(\brho^0) \\
	&= \rho_i^0\big(\pa_t v^0 + v^0\cdot\na v^0\big)
	+ \rho_i^0\na\frac{\delta\E}{\delta\rho_i}(\brho^0) \\
	&= \frac{\rho_i^0}{\rho^0}\big(\pa_t(\rho^0v^0) + \diver(\rho^0v^0\otimes v^0)\big)
	+ \rho_i^0\na\frac{\delta\E}{\delta\rho_i}(\brho^0) \\
  &= -\frac{\rho_i^0}{\rho^0}\sum_{j=1}^n\rho_j^0\na\frac{\delta\E}{\delta\rho_j}
	(\brho^0) + \rho_i^0\na\frac{\delta\E}{\delta\rho_i}(\brho^0),
\end{align*}
where in the last step we have used \eqref{2.I02}. This motivates us to define
\begin{equation}\label{2.deps}
  d_i^\eps := -\frac{\rho_i^\eps}{\rho^\eps}\sum_{j=1}^n\rho_j^\eps
	\na\frac{\delta\E}{\delta\rho_j}(\brho^\eps) 
	+ \rho_i^\eps\na\frac{\delta\E}{\delta\rho_i}(\brho^\eps).
\end{equation}
Hence, we can formulate \eqref{2.d} as
$$
  -\sum_{j=1}^n b_{ij}\rho_i^\eps\rho_j^\eps(u_i^\eps-u_j^\eps) 
	= \eps d_i^\eps + O(\eps^2).
$$
The constraints $\sum_{i=1}^n\rho_i^0 u_i^0 = 0$ and $\sum_{j=1}^n\rho_i^0u_i^1=0$ from \eqref{2.zero} imply that
$$
  \sum_{i=1}^n\rho_i^\eps u_i^\eps = \sum_{i=1}^n\rho_i^0 u_i^0+  \eps\sum_{i=1}^n \rho_i^0u_i^1 + O(\eps^2)
	= O(\eps^2).
$$
As the functions $\rho_i^\eps$, $v^\eps$, and $u_i^\eps$ are defined only up to 
order $O(\eps^2)$, we may set $\sum_{i=1}^n \rho_i^\eps u_i^\eps=0$ up to that
order.

We summarize our calculations. The functions 
$(\brho^\eps,v^\eps)=(\rho_1^\eps,\ldots,\rho_n^\eps,v^\eps)$
satisfy up to order $O(\eps^2)$ the effective equations
\begin{align}
  \pa_t\rho_i^\eps + \diver(\rho_i^\eps v^\eps) 
	&= -\diver(\rho_i^\eps u_i^\eps), \label{2.eff1} \\	
  \pa_t(\rho^\eps v^\eps) + \diver(\rho^\eps v^\eps\otimes v^\eps)
	&= -\sum_{i=1}^n\rho_i^\eps\na\frac{\delta\E}{\delta\rho_i}(\brho^\eps),
	\label{2.eff2}
\end{align}
where  $\rho^\eps=\sum_{i=1}^n\rho^\eps_i$, and $\bm{u}^\eps=(u_1^\eps,\ldots,u_n^\eps)$ is
the unique solution to
\begin{equation}\label{2.effu}
  -\sum_{j=1}^n b_{ij}\rho_i^\eps\rho_j^\eps(u_i^\eps-u_j^\eps) = \eps d_i^\eps,
	\quad \sum_{j=1}^n\rho_j^\eps u_j^\eps=0,
\end{equation}
for $i=1,\ldots,n$, where $d_i^\eps$ is defined in \eqref{2.deps}.


\subsection{Gradient-flow structure and parabolicity}\label{sec.para}

We show that the effective equations have a formal gradient-flow structure
and, if the total mass is constant, a parabolic structure 
in the sense of Petrovskii \cite{Ama93}. 
First, we reformulate system \eqref{2.eff1}-\eqref{2.effu}.

\begin{lemma}[Gradient-flow structure]\label{lem.diff}
System \eqref{2.eff1}-\eqref{2.effu} can be rewritten as
\begin{align*}
  \pa_t\rho_i^\eps + \diver(\rho_i^\eps v^\eps) 
	&= \eps\diver\sum_{j=1}^n D_{ij}^\eps\na\frac{\delta\E}{\delta\rho_j}(\brho^\eps), \\
  \pa_t(\rho^\eps v^\eps) + \diver(\rho^\eps v^\eps\otimes v^\eps)
	&= -\sum_{i=1}^n \rho_i^\eps\na\frac{\delta\E}{\delta\rho_i}(\brho^\eps),
\end{align*}
where $i=1,\ldots,n$, $\rho^\eps=\sum_{i=1}^n\rho_i^\eps$, and 
$$
  D^\eps = G(Q^\eps)^{-1}(\tau^\eps)^{-1}(Q^\eps)^{-1} G^\top\in\R^{n\times n},
$$ 
where $(Q^\eps)^{-1}\in\R^{(n-1)\times(n-1)}$ has the coefficients
$(Q^\eps)_{ij}^{-1}=\delta_{ij}\rho_i^\eps-\rho_i^\eps\rho_j^\eps/\rho^\eps$,
$(\tau^\eps)^{-1}$ is the inverse of the $(n-1)\times(n-1)$ matrix introduced
in Lemma \ref{lem.linsys}, and $G=(G_{ij})\in\R^{n\times(n-1)}$ is defined
by $G_{ii}=1$, $G_{ni}=-1$ for $i=1,\ldots,n-1$, and $G_{ij}=0$ elsewhere. 
\end{lemma}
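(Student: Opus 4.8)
The plan is to start from the velocity relation \eqref{2.effu}, which after setting $B_{ij}^\eps = b_{ij}\rho_i^\eps\rho_j^\eps$ is exactly the linear system \eqref{2.linB} of Lemma \ref{lem.linsys} with right-hand side $\eps d_i^\eps$. Since $\sum_i d_i^\eps = 0$ by construction of \eqref{2.deps}, Lemma \ref{lem.linsys} applies and gives
$$
  \rho_i^\eps u_i^\eps = -\eps\sum_{j,k=1}^{n-1}(Q^\eps)^{-1}_{ij}(\tau^\eps)^{-1}_{jk}d_k^\eps,
  \qquad i=1,\ldots,n-1,
$$
with $\rho_n^\eps u_n^\eps = -\sum_{j=1}^{n-1}\rho_j^\eps u_j^\eps$. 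The first step is therefore merely to quote this and record that the divergence term in \eqref{2.eff1} is $-\diver(\rho_i^\eps u_i^\eps)$.

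The second step is to rewrite both the vector $(\rho_i^\eps u_i^\eps)_{i=1}^n$ and the right-hand side $(d_i^\eps)_{i=1}^n$ in terms of their first $n-1$ components using the matrix $G\in\R^{n\times(n-1)}$. The point of $G$ (with $G_{ii}=1$, $G_{ni}=-1$) is that if $w\in\R^n$ satisfies $\sum_i w_i = 0$ then $w = G\widetilde w$ where $\widetilde w = (w_1,\dots,w_{n-1})$; this applies both to $(\rho_i^\eps u_i^\eps)_i$ (which sums to zero by the constraint in \eqref{2.effu}) and to $(d_i^\eps)_i$. Hence
$$
  \big(\rho_i^\eps u_i^\eps\big)_{i=1}^n
  = G\big(\rho_i^\eps u_i^\eps\big)_{i=1}^{n-1}
  = -\eps\, G\,(Q^\eps)^{-1}(\tau^\eps)^{-1}\big(d_k^\eps\big)_{k=1}^{n-1}.
$$
It remains to express $(d_k^\eps)_{k=1}^{n-1}$ through the full vector of gradients $g_j := \na(\delta\E/\delta\rho_j)(\brho^\eps)$, $j=1,\dots,n$. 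From \eqref{2.deps} one has $d_i^\eps = \rho_i^\eps g_i - (\rho_i^\eps/\rho^\eps)\sum_j \rho_j^\eps g_j$; the third step is to observe that, on the space $\{\sum_i d_i = 0\}$, the map $g\mapsto d$ restricted to the first $n-1$ output components factors through $G^\top$ composed with $(Q^\eps)^{-1}$, i.e.
$$
  \big(d_k^\eps\big)_{k=1}^{n-1} = (Q^\eps)^{-1} G^\top \big(g_j\big)_{j=1}^n .
$$
This is the one genuine computation: using $(Q^\eps)^{-1}_{kj}=\delta_{kj}\rho_k^\eps-\rho_k^\eps\rho_j^\eps/\rho^\eps$ and $(G^\top)_{jl}=\delta_{jl}-\delta_{j,n}$ one checks that $\sum_{j=1}^n\big((Q^\eps)^{-1}G^\top\big)_{kj}g_j = \rho_k^\eps g_k - (\rho_k^\eps/\rho^\eps)\sum_j \rho_j^\eps g_j = d_k^\eps$ for $k\le n-1$, where the key cancellation is $\sum_{j=1}^{n-1}\rho_j^\eps + \rho_n^\eps = \rho^\eps$ together with the identity $\sum_j \rho_j^\eps g_j$ being recovered from the $j=n$ column via $G^\top$. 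Substituting this into the previous display yields
$$
  \big(\rho_i^\eps u_i^\eps\big)_{i=1}^n
  = -\eps\, G\,(Q^\eps)^{-1}(\tau^\eps)^{-1}(Q^\eps)^{-1} G^\top\big(g_j\big)_{j=1}^n
  = -\eps\, D^\eps\big(g_j\big)_{j=1}^n,
$$
so that $-\diver(\rho_i^\eps u_i^\eps) = \eps\diver\sum_j D^\eps_{ij} g_j$, which is the claimed form of the first equation; the momentum equation \eqref{2.eff2} is copied verbatim.

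The main obstacle is the bookkeeping in the third step: one must verify that the factorization $d = (Q^\eps)^{-1}G^\top g$ is correct precisely on the constrained subspace and that no spurious contribution from the dropped $n$-th component appears — in other words, that reading off only the first $n-1$ components of $d^\eps$ and reconstructing the $n$-th via $\sum_i d_i^\eps=0$ is consistent with the $G$, $G^\top$ sandwich. This is where care is needed because $Q^\eps$, $\tau^\eps$ and $G$ all live in different dimensions; once the two identities $(Q^\eps)^{-1}Q^\eps = I$ (already in Lemma \ref{lem.linsys}) and $d^\eps = (Q^\eps)^{-1}G^\top g$ are in hand, the assembly of $D^\eps$ is immediate. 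I would also remark, without proving it here, that $D^\eps$ so defined is symmetric (since $\tau^\eps$ is symmetric and $(Q^\eps)^{-1}$ is symmetric) and positive semidefinite on $\{\sum_i g_i\perp\}$, which is what makes the right-hand side a genuine gradient flow and sets up Lemma \ref{lem.Estar}.
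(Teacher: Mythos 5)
Your proposal is correct and follows essentially the same route as the paper: invoke Lemma \ref{lem.linsys} (noting $\sum_i d_i^\eps=0$), rewrite $d_i^\eps=\sum_{j=1}^{n-1}(Q^\eps)^{-1}_{ij}\na(\delta\E/\delta\rho_j-\delta\E/\delta\rho_n)$, i.e.\ $(d^\eps_k)_{k\le n-1}=(Q^\eps)^{-1}G^\top g$, and use $G$ to restore the $n$-th component via the constraint, yielding $D^\eps=G(Q^\eps)^{-1}(\tau^\eps)^{-1}(Q^\eps)^{-1}G^\top$. The only cosmetic difference is that you phrase the key identity as an explicit matrix factorization while the paper carries out the same computation component-wise before assembling $D^\eps$ at the end.
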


\begin{proof}
In view of Lemma \ref{lem.linsys}, the solution to \eqref{2.effu} can be expressed as
\begin{equation}\label{2.aux1}
  \rho_i^\eps u_i^\eps = -\eps\sum_{j,k=1}^{n-1}\bigg(\delta_{ij}\rho_i^\eps
	- \frac{\rho_i^\eps\rho_j^\eps}{\rho^\eps}\bigg)({\tau}^\eps)_{jk}^{-1} d_k^\eps,
	\quad i=1,\ldots,n-1,
\end{equation}
where $({\tau}^\eps)^{-1}=(({\tau}^\eps)_{jk}^{-1})$ 
is the inverse of a regular matrix in $\R^{(n-1)\times(n-1)}$ whose coefficients
only depend on $b_{ij}\rho_i^\eps\rho_j^\eps$. We wish to reformulate $d_i^\eps$
in terms of $\brho^\eps$. For this, we compute, using
$\rho^\eps_n = \rho^\eps - \sum_{j=1}^{n-1}\rho_j^\eps$,
\begin{align*}
  d_i^\eps &= \rho_i^\eps\na\frac{\delta\E}{\delta\rho_i}(\brho^\eps)
	- \frac{\rho_i^\eps}{\rho^\eps}\sum_{j=1}^n
	\rho_j^\eps\na\frac{\delta\E}{\delta\rho_j}(\brho^\eps) \\
	&= \sum_{j=1}^{n-1}\bigg(\delta_{ij}\rho_i^\eps 
	- \frac{\rho_i^\eps\rho_j^\eps}{\rho^\eps}\bigg)
	\na\frac{\delta\E}{\delta\rho_j}(\brho^\eps)
	- \frac{\rho_i^\eps\rho_n^\eps}{\rho^\eps}\na\frac{\delta\E}{\delta\rho_n}
	(\brho^\eps) \\
	&= \sum_{j=1}^{n-1}(Q^\eps)_{ij}^{-1}\na\frac{\delta\E}{\delta\rho_j}(\brho^\eps)
	- \frac{\rho_i^\eps}{\rho^\eps}\bigg(\rho^\eps 
	- \sum_{j=1}^{n-1}\rho_j^\eps\bigg)\na\frac{\delta\E}{\delta\rho_n}(\brho^\eps) \\
	&= \sum_{j=1}^{n-1}(Q^\eps)_{ij}^{-1}\na\frac{\delta\E}{\delta\rho_j}(\brho^\eps)
	- \sum_{j=1}^{n-1}\bigg(\delta_{ij}\rho_i^\eps 
	- \frac{\rho_i^\eps\rho_j^\eps}{\rho^\eps}\bigg)
	\na\frac{\delta\E}{\delta\rho_n}(\brho^\eps) \\
	&= \sum_{j=1}^{n-1}(Q^\eps)_{ij}^{-1}\na\bigg(\frac{\delta\E}{\delta\rho_j}(\brho^\eps)
	- \frac{\delta\E}{\delta\rho_n}(\brho^\eps)\bigg).
\end{align*}
Inserting this expression into \eqref{2.aux1} gives
\begin{align}
  \rho_i^\eps u_i^\eps 
	&= -\eps\sum_{j,k,\ell=1}^{n-1}(Q^\eps)_{ij}^{-1}
	(\tau^\eps)_{jk}^{-1}(Q^\eps)_{k\ell}^{-1}
	\na\bigg(\frac{\delta\E}{\delta\rho_\ell}(\brho^\eps)
	- \frac{\delta\E}{\delta\rho_n}(\brho^\eps)\bigg) \nonumber \\
  &= -\eps\sum_{\ell=1}^{n-1} \widetilde D^\eps_{i\ell}
	\na\bigg(\frac{\delta\E}{\delta\rho_\ell}(\brho^\eps)
	- \frac{\delta\E}{\delta\rho_n}(\brho^\eps)\bigg), \label{2.rhou}
\end{align}
with $\widetilde D_{ij}^\eps$  the elements of the invertible matrix
$\widetilde D^\eps=(Q^\eps)^{-1}(\tau^\eps)^{-1}(Q^\eps)^{-1}\in\R^{(n-1)\times(n-1)}$.
Finally, setting $D^\eps:=G\widetilde D^\eps G^\top$,
we can formulate \eqref{2.rhou} as
\begin{align}\label{eq:rhou}
  \rho_i^\eps u_i^\eps = -\eps\sum_{j=1}^n D_{ij}^\eps\na\frac{\delta\E}{\delta\rho_j}
	(\brho^\eps),	\quad i = 1,\ldots, n.
\end{align}
Note that in this writing, the last row of the matrix expresses the constraint
$\rho_n u_n = - \sum_{j=1}^{n-1} \rho_j u_j$.
We finish the proof after inserting this expression into \eqref{2.eff1}. 
\end{proof}

Let $v^\eps=0$. Then the sum of \eqref{2.eff1} over $i=1,\ldots,n$
yields, because of $\sum_{i=1}^n\rho_i^\eps u_i^\eps=0$,
$\pa_t\rho^\eps = 0$. Thus, $\rho^\eps$ does not depend on time and is fixed
by the initial total mass. It is sufficient to consider 
$\widetilde\brho^\eps:=(\rho_1^\eps,\ldots,\rho_{n-1}^\eps)$ since
the last component can be recovered from $\rho_n^\eps
=\rho^\eps-\sum_{i=1}^{n-1}\rho_i^\eps$. Accordingly, the energy
can be formulated as a function of the variable $\widetilde\brho^\eps$:
\begin{equation}
\label{redenergy}
  \widetilde\E(\widetilde\brho^\eps) := \E\bigg(\rho_1^\eps,\ldots,\rho_{n-1}^\eps,
	\rho^\eps-\sum_{i=1}^{n-1}\rho_i^\eps\bigg).
\end{equation}

\begin{lemma}[Parabolicity in the sense of Petrovskii]\label{lem.Estar}
Let $(\brho^\eps,v^\eps)$ be a solution to
\eqref{2.eff1}-\eqref{2.eff2} with $v^\eps =0$ and let 
$\bm{u}^\eps$ be a solution to \eqref{2.effu}. 
Suppose that $\E(\brho^\eps)$ is strictly convex. Then $\brho^\eps$ solves
\begin{equation}\label{2.Estar}
  \pa_t\rho_i^\eps = \eps\diver\sum_{j=1}^{n-1}\widetilde D_{ij}^\eps
	\na\frac{\delta\widetilde\E}{\delta\rho_j}
	(\widetilde\brho^\eps), \quad i=1,\ldots,n-1,
\end{equation}
the matrix $\widetilde D^\eps=(\widetilde D_{ij}^\eps)$ is positive definite,
and the energy $\widetilde\E$ is a Lyapunov functional 
along solutions to \eqref{2.Estar}:
$$
  \frac{d\widetilde\E}{dt} = -\eps\int_{\R^3}\sum_{i,j=1}^n \widetilde D_{ij}^\eps
	\na\frac{\delta\widetilde\E}{\delta\rho_i}\cdot
	\na\frac{\delta\widetilde\E}{\delta\rho_j}dx \le 0.
$$
Moreover, if $\rho_i^\eps>0$ for $i=1,\ldots,n$,
all eigenvalues of $\widetilde D^\eps\widetilde\E''$ are real and positive
(here, $\widetilde\E''=d^2\widetilde\E/d\widetilde{\brho}^2$ 
is the Hessian of the energy $\widetilde\E$). This means that
\eqref{2.Estar} is parabolic in the sense of Petrovskii.
\end{lemma}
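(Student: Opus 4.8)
The plan is to establish the three assertions of Lemma~\ref{lem.Estar} in order: the reduction to the closed system \eqref{2.Estar}, the positive definiteness of $\widetilde D^\eps$ together with the Lyapunov property, and finally the Petrovskii parabolicity of $\widetilde D^\eps\widetilde\E''$. First I would observe that when $v^\eps=0$ the momentum equation \eqref{2.eff2} forces $\sum_{i=1}^n\rho_i^\eps\na(\delta\E/\delta\rho_i)(\brho^\eps)=0$, but in fact this is not needed for the reduction; what I actually use is that \eqref{2.eff1} with $v^\eps=0$ plus formula \eqref{eq:rhou} (equivalently \eqref{2.rhou}) from Lemma~\ref{lem.diff} already gives $\pa_t\rho_i^\eps = \eps\diver\sum_{\ell=1}^{n-1}\widetilde D^\eps_{i\ell}\na(\delta\E/\delta\rho_\ell - \delta\E/\delta\rho_n)(\brho^\eps)$ for $i=1,\ldots,n-1$. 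Then I invoke the definition \eqref{redenergy} of $\widetilde\E$ and the chain rule: since $\rho_n^\eps=\rho^\eps-\sum_{j=1}^{n-1}\rho_j^\eps$ with $\rho^\eps$ constant, one has $\delta\widetilde\E/\delta\rho_j = \delta\E/\delta\rho_j - \delta\E/\delta\rho_n$ for $j=1,\ldots,n-1$. Substituting this identity into the evolution equation yields exactly \eqref{2.Estar}.

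Next, for the positive definiteness of $\widetilde D^\eps = (Q^\eps)^{-1}(\tau^\eps)^{-1}(Q^\eps)^{-1}$, I would argue as follows. The matrix $(Q^\eps)^{-1}$ is symmetric (its entries are $\delta_{ij}\rho_i^\eps - \rho_i^\eps\rho_j^\eps/\rho^\eps$, manifestly symmetric since $\rho^\eps$ is a scalar) and in fact positive definite: for $\xi\in\R^{n-1}$ one computes $\xi^\top(Q^\eps)^{-1}\xi = \sum_{i=1}^{n-1}\rho_i^\eps\xi_i^2 - (\sum_{i=1}^{n-1}\rho_i^\eps\xi_i)^2/\rho^\eps$, which by Cauchy–Schwarz (with weights $\rho_i^\eps$, total weight $\sum_{i=1}^{n-1}\rho_i^\eps < \rho^\eps$) is nonnegative and vanishes only at $\xi=0$. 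Therefore it suffices to show $(\tau^\eps)^{-1}$ is positive definite, equivalently that the $(n-1)\times(n-1)$ submatrix $\tau^\eps$ is. But $\tau^\eps$ is the relevant block of the full matrix $(\tau_{ij})$ of \eqref{eq:taudef}, which is a symmetric graph-Laplacian-type matrix: $\xi^\top\tau\xi = \frac12\sum_{i,j}b_{ij}\rho_i^\eps\rho_j^\eps(\xi_i-\xi_j)^2 \ge 0$, with kernel exactly $\operatorname{span}\{\mathbf1\}$ by hypothesis~\ref{N}. Restricting the quadratic form to the complementary coordinate subspace $\{\xi_n=0\}$ removes the kernel direction, so the submatrix is strictly positive definite. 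Hence $\widetilde D^\eps>0$, being a congruence $(Q^\eps)^{-1}(\tau^\eps)^{-1}(Q^\eps)^{-1}$ of the positive definite $(\tau^\eps)^{-1}$ by the invertible symmetric $(Q^\eps)^{-1}$. The Lyapunov identity then follows by testing \eqref{2.Estar} with $\delta\widetilde\E/\delta\rho_i$, integrating over $\R^3$, and integrating by parts; the resulting quadratic form $\int\sum_{i,j}\widetilde D^\eps_{ij}\na(\delta\widetilde\E/\delta\rho_i)\cdot\na(\delta\widetilde\E/\delta\rho_j)\,dx$ is nonnegative precisely because $\widetilde D^\eps\ge0$, so $d\widetilde\E/dt\le0$.

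For the Petrovskii parabolicity, I need all eigenvalues of the product $\widetilde D^\eps\widetilde\E''$ to be real and positive, where $\widetilde\E'' = d^2\widetilde\E/d\widetilde\brho^2$ is the Hessian of the reduced energy. The point is that $\widetilde\E''$ is symmetric positive definite: strict convexity of $\E(\brho^\eps)$ is assumed, and the constraint $\rho_n^\eps = \rho^\eps - \sum_{j<n}\rho_j^\eps$ defines an affine section, so $\widetilde\E$ is the restriction of a strictly convex function to an affine subspace and is therefore itself strictly convex, i.e.\ $\widetilde\E''>0$ (here I also use $\rho_i^\eps>0$ to stay in the domain where $\E$ is defined and smooth). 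Now $\widetilde D^\eps\widetilde\E''$ is a product of two symmetric positive definite matrices, which is a similarity transform of a symmetric positive definite matrix: indeed with $S=(\widetilde\E'')^{1/2}$ one has $S(\widetilde D^\eps\widetilde\E'')S^{-1} = S\widetilde D^\eps S$, which is symmetric positive definite, so $\widetilde D^\eps\widetilde\E''$ has real positive spectrum. This is exactly the parabolicity condition in the sense of Petrovskii for \eqref{2.Estar}.

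I expect the main obstacle to be the bookkeeping around the reduction step — carefully verifying the chain-rule identity $\delta\widetilde\E/\delta\rho_j = \delta\E/\delta\rho_j - \delta\E/\delta\rho_n$ at the level of variational derivatives (not just partial derivatives), since $\E$ depends on gradients $\na\rho_i$ as well, so one must check that the $\na\rho_n$-contribution transforms correctly under $\na\rho_n^\eps = -\sum_{j<n}\na\rho_j^\eps$. The spectral arguments themselves are standard linear algebra once positive definiteness of $\tau^\eps$, $(Q^\eps)^{-1}$, and $\widetilde\E''$ is in hand.
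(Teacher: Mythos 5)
Your proposal is correct and follows essentially the same route as the paper: reduction via formula \eqref{eq:rhou} and the identity $\delta\widetilde\E/\delta\rho_j=\delta\E/\delta\rho_j-\delta\E/\delta\rho_n$, positive definiteness of $\widetilde D^\eps$ by congruence with the symmetric invertible $(Q^\eps)^{-1}$, convexity of $\widetilde\E$ inherited from $\E$ along the affine section, and the real-positive-spectrum property of a product of two symmetric positive definite matrices. Your only deviations are cosmetic: you verify positive definiteness of the submatrix $\tau^\eps$ by restricting the graph-Laplacian quadratic form to $\{\xi_n=0\}$ rather than via diagonal dominance, and you prove the spectral fact by the $S(\widetilde D^\eps\widetilde\E'')S^{-1}=S\widetilde D^\eps S$ similarity instead of citing Serre.
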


A second-order system is called {\it parabolic in the sense of Petrovskii}
if the real parts of the eigenvalues of the diffusion matrix are positive;
see \cite[Remark 4.2a]{Ama93}.

\begin{proof}
Since the variational derivative of $\widetilde\E$ equals
$$
  \frac{\delta\widetilde\E}{\delta\rho_i}(\widetilde{\brho}^\eps)
	= \frac{\delta\E}{\delta\rho_i}(\brho^\eps)
	- \frac{\delta\E}{\delta\rho_n}(\brho^\eps), \quad i=1,\ldots,n-1,
$$
expression \eqref{2.rhou} in the proof of Lemma \ref{lem.diff} shows that 
for $i=1,\ldots,n-1$,
$$
  \eps\sum_{j=1}^n D_{ij}^\eps\na\frac{\delta\E}{\delta\rho_j}(\brho^\eps)
	= \eps\sum_{j=1}^{n-1} \widetilde D^\eps_{ij}
	\na\bigg(\frac{\delta\E}{\delta\rho_j}(\brho^\eps)
	- \frac{\delta\E}{\delta\rho_n}(\brho^\eps)\bigg)
	= \eps\sum_{j=1}^{n-1} \widetilde D^\eps_{ij}
	\na\frac{\delta\widetilde\E}{\delta\rho_j}(\widetilde{\brho}^\eps),
$$
proving \eqref{2.Estar}. Next, we show that $\widetilde D^\eps$ is positive definite.
As $(b_{ij})$ is a symmetric matrix with nonnegative entries 
(by assumption \ref{N} on page \pageref{N}), the matrix
$$
\tau^\eps_{ij} = \delta_{ij}\sum_{k=1}^n b_{ik}\rho_i^\eps\rho_k^\eps
- b_{ij}\rho_i^\eps\rho_j^\eps
$$ 
is symmetric, diagonally dominant, and has
real nonnegative diagonal elements. Therefore, $(\tau_{ij}^\eps)$ 
is positive semidefinite.
We know from the proof of Lemma \ref{lem.linsys} that there exists an invertible
$(n-1)\times(n-1)$ submatrix $({\tau}^\eps)_{ij}^{-1}$. This submatrix is symmetric,
positive semidefinite, and invertible, so all its eigenvalues must be positive and,
in fact, it is positive definite.
Moreover, since $(Q^\eps)^{-1}$ is regular, 
$\widetilde D^\eps=(Q^\eps)^{-1}(\tau^\eps)^{-1}(Q^\eps)^{-1}$ is positive definite.

It remains to show that $\widetilde D^\eps\widetilde\E''$ has only real and positive
eigenvalues. We claim that $\widetilde\E''$ is positive definite. To see this, 
we calculate (dropping the superindex $\eps$)
$$
  \widetilde\E'' = \frac{d}{d\widetilde\brho}
	\bigg(\frac{d\widetilde\E}{d\brho}
	\frac{d\brho}{d\widetilde\brho}\bigg)
	= \bigg(\frac{d\brho}{d\widetilde\brho}\bigg)^\top
	\frac{d^2\widetilde\E}{d\brho^2}
	\bigg(\frac{d\brho}{d\widetilde\brho}\bigg)
	+ \frac{d\widetilde\E}{d\brho}\frac{d^2\brho}{d\widetilde\brho^2}.
$$
Since $\brho=(\rho_1,\ldots,\rho_{n-1},\rho-\sum_{i=1}^{n-1}\rho_i)$, we have
$$
  \frac{d\brho}{d\widetilde\brho} = \begin{pmatrix}
	1      & 0 & \cdots & 0 \\
	0      & 1 &        & \vdots \\
	\vdots &   & \ddots & 0 \\
	0      & 0 &        & 1 \\
	-1     &-1 & \cdots &-1 \end{pmatrix}\in \R^{n\times (n-1)},
$$
and $d^2\brho/d\widetilde\brho^2$ vanishes since the transformation 
$\widetilde\brho\mapsto\brho$ is linear. 
By the strict convexity of $\widetilde\E$, there exists $\kappa>0$ such that 
for any $z=(z_1,\ldots,z_{n-1})\in\R^{n-1}$,
$$
  z^\top\widetilde\E''z = z^\top\bigg(\frac{d\brho}{d\widetilde\brho}\bigg)^\top
	\frac{d^2\widetilde\E}{d\brho^2}
	\bigg(\frac{d\brho}{d\widetilde\brho}\bigg)z
	\ge \kappa\bigg|\frac{d\brho}{d\widetilde\brho}z\bigg|^2
	= \kappa\sum_{i=1}^{n-1}z_i^2 + \kappa\bigg(\sum_{i=1}^{n-1}z_i\bigg)^2
  \ge \kappa|z|^2.
$$
This shows that $\widetilde\E''$ is symmetric and positive definite. Since also 
$\widetilde D^\eps$ is symmetric and positive definite, 
Proposition 6.1 of \cite{Ser10} implies that the eigenvalues of 
$\widetilde D^\eps\widetilde\E''$ are real and positive.
\end{proof}


\section{Justification of the Chapman-Enskog expansion}\label{sec.rigorCE}

In this section, we justify the validity of the Chapman-Enskog expansion 
performed in section \ref{sec.CE}. We recall that the energy is the sum
of the partial energies depending on the partial densities and their gradients,
\begin{equation}\label{3.assumptE}
  \E(\brho) = \int_{\R^3}\sum_{i=1}^n F_i(\rho_i,\na\rho_i)dx.
\end{equation}
It includes Euler-Korteweg models with the partial energy density \eqref{1.korte}.
Under this hypothesis, it is shown in \cite[formula (2.25)]{GLT17} 
that the force term in \eqref{1.eq2} can
be written as the divergence of a stress tensor $S_i$:
\begin{equation}\label{3.ES}
  -\rho_i\na\frac{\delta\E}{\delta\rho_i}(\brho) = \diver S_i(\brho), \quad
	i=1,\ldots,n,
\end{equation}
where 
\begin{align}
  & S_i(\brho) = -s_i(\rho_i,\na\rho_i) \mathbb{I} 
	+ \diver r_i(\rho_i,\na\rho_i)\mathbb{I}
	- H_i(\rho_i,\na\rho_i), \quad\mbox{and} \label{3.S} \\
	& s_i(\rho_i,q_i) = \rho_i\frac{\pa F_i}{\pa\rho_i}(\rho_i,q_i) 
	+ q_i\cdot\frac{\pa F_i}{\pa q_i}(\rho_i,q_i) - F_i(\rho_i,q_i), \nonumber \\
	& r_i(\rho_i,q_i) = \rho_i\frac{\pa F_i}{\pa q_i}(\rho_i,q_i), \nonumber \\
	& H_i(\rho_i,q_i) = q_i\otimes \frac{\pa F_i}{\pa q_i}(\rho_i,q_i), \nonumber
\end{align}
and $q_i=\nabla \rho_i$, $\mathbb{I}$ is the unit matrix in $\R^{3\times 3}$.

We consider weak solutions to the original system \eqref{1.eq1}-\eqref{1.eq2},
\begin{align}
  \pa_t\rho_i^\eps + \diver(\rho_i^\eps v_i^\eps) &= 0, \quad i=1,\ldots,n, 
	\label{3.rho} \\
	\pa_t(\rho_i^\eps v_i^\eps) + \diver(\rho_i^\eps v_i^\eps\otimes v_i^\eps)
	&= -\rho_i^\eps\na\frac{\delta\E}{\delta\rho_i}(\brho^\eps) 
	- \frac{1}{\eps}\sum_{j=1}^n b_{ij}\rho_i^\eps\rho_j^\eps
	(v_i^\eps-v_j^\eps), \label{3.rhov}
\end{align}
and strong solutions to the approximate system \eqref{2.eff1}-\eqref{2.eff2},
\begin{align}
  \pa_t\widehat\rho_i^\eps + \diver(\widehat\rho_i^\eps\widehat v^\eps)
	&= -\diver(\widehat\rho_i^\eps\widehat u_i^\eps), \quad i=1,\ldots,n, \label{3.rhoeps}\\
	\pa_t(\widehat\rho^\eps \widehat v^\eps) + \diver(\widehat\rho^\eps
	\widehat v^\eps\otimes\widehat v^\eps) 
	&= -\sum_{j=1}^n\widehat\rho_j^\eps\na\frac{\delta\E}{\delta\rho_j}
	(\widehat\brho^\eps),
	\quad \widehat\rho^\eps = \sum_{j=1}^n\widehat\rho_j^\eps, \label{3.rhoveps}
\end{align}
where $(\widehat u_1^\eps,\ldots,\widehat u_n^\eps)$ solves \eqref{2.effu},
\begin{equation}\label{3.u}
  -\sum_{j=1}^n b_{ij}\widehat\rho_i^\eps\widehat\rho_j^\eps
	(\widehat u_i^\eps-\widehat u_j^\eps) = \eps \widehat d_i^\eps, \quad
	\sum_{j=1}^n\widehat\rho_j^\eps\widehat u_j^\eps = 0,
\end{equation}
and $\widehat d_i^\eps$ is given by \eqref{2.deps},
$$
  \widehat d_i^\eps = -\frac{\widehat\rho_i^\eps}{\widehat\rho^\eps}\sum_{j=1}^n
	\widehat\rho_j^\eps\na\frac{\delta\E}{\delta\rho_j}(\widehat\brho^\eps)
	+ \widehat\rho_i^\eps\na\frac{\delta\E}{\delta\rho_i}(\widehat\brho^\eps).
$$
Our aim is to show that the difference of the solutions of
\eqref{3.rho}-\eqref{3.rhov} and \eqref{3.rhoeps}-\eqref{3.rhoveps}
converges to zero as $\eps\to 0$ in a certain sense; see Theorem \ref{thm.convCE}
below.

Lemma \ref{lem.diff} shows that system \eqref{3.rhoeps}-\eqref{3.rhoveps} 
can be written without the variable $\widehat u_i^\eps$ as a diffusion system. 
However, the current formulation is more convenient to verify the convergence result. 
In the sequel, we replace $-\rho_i\na(\delta\E/\delta\rho_i)$
by $\diver S_i$  using \eqref{3.ES}.

\subsection{Preparations}\label{sec.prep}

We reformulate the approximate system \eqref{3.rhoeps}-\eqref{3.rhoveps} 
in a form that resembles the original system \eqref{3.rho}-\eqref{3.rhov} 
with an error term:

\begin{lemma}\label{lem.reform}
Setting $\widehat v_i^\eps=\widehat v^\eps+\widehat u_i^\eps$,
system \eqref{3.rhoeps}-\eqref{3.rhoveps} is equivalent to
\begin{align}
  \pa_t\widehat\rho_i^\eps + \diver(\widehat\rho_i^\eps\widehat v_i^\eps) &= 0, 
	\label{3.rhoeps2} \\
	\pa_t(\widehat\rho_i^\eps\widehat v_i^\eps) + \diver(\widehat\rho_i^\eps
	\widehat v_i^\eps\otimes\widehat v_i^\eps)
	&= -\widehat\rho_i^\eps\na\frac{\delta\E}{\delta\rho_i}
	(\widehat\brho^\eps) 
	- \frac{1}{\eps}\sum_{j=1}^n b_{ij}\widehat\rho_i^\eps\widehat\rho_j^\eps
	(\widehat v_i^\eps-\widehat v_j^\eps) + \widehat R_i^\eps, \label{3.rhoveps2}
\end{align}
where the remainder $\widehat R_i^\eps$ is given by
\begin{equation}\label{3.Reps}
  \widehat R_i^\eps :=  -\widehat v^\eps
	\diver(\widehat\rho_i^\eps\widehat u_i^\eps)
	+ \pa_t(\widehat\rho_i^\eps\widehat u_i^\eps)
	+ \diver(\widehat\rho_i^\eps\widehat u_i^\eps\otimes\widehat v^\eps
	+ \widehat\rho_i^\eps\widehat v^\eps\otimes\widehat u_i^\eps)
	+ \diver(\widehat\rho_i^\eps\widehat u_i^\eps\otimes\widehat u_i^\eps).
\end{equation}
\end{lemma}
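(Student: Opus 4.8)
The plan is to show that \eqref{3.rhoeps2}--\eqref{3.rhoveps2} is nothing but an algebraic rewriting of \eqref{3.rhoeps}--\eqref{3.rhoveps} under the substitution $\widehat v_i^\eps = \widehat v^\eps + \widehat u_i^\eps$, with $\widehat R_i^\eps$ collecting exactly the terms that do not match. First I would treat the continuity equation: substituting $\widehat v_i^\eps = \widehat v^\eps + \widehat u_i^\eps$ into $\pa_t\widehat\rho_i^\eps + \diver(\widehat\rho_i^\eps\widehat v_i^\eps)=0$ gives $\pa_t\widehat\rho_i^\eps + \diver(\widehat\rho_i^\eps\widehat v^\eps) + \diver(\widehat\rho_i^\eps\widehat u_i^\eps)=0$, which is precisely \eqref{3.rhoeps}. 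So the two continuity equations are literally equivalent and there is nothing to prove there; I would state this in one line.

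The momentum equation is where the remainder appears. I would start from the left-hand side of \eqref{3.rhoveps2}, expand $\widehat\rho_i^\eps\widehat v_i^\eps = \widehat\rho_i^\eps\widehat v^\eps + \widehat\rho_i^\eps\widehat u_i^\eps$ and $\widehat\rho_i^\eps\widehat v_i^\eps\otimes\widehat v_i^\eps = \widehat\rho_i^\eps\widehat v^\eps\otimes\widehat v^\eps + \widehat\rho_i^\eps\widehat u_i^\eps\otimes\widehat v^\eps + \widehat\rho_i^\eps\widehat v^\eps\otimes\widehat u_i^\eps + \widehat\rho_i^\eps\widehat u_i^\eps\otimes\widehat u_i^\eps$, and collect the $\widehat v^\eps$-only terms separately from the ones involving $\widehat u_i^\eps$. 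The $\widehat v^\eps$-only part is $\pa_t(\widehat\rho_i^\eps\widehat v^\eps) + \diver(\widehat\rho_i^\eps\widehat v^\eps\otimes\widehat v^\eps)$; using the continuity equation \eqref{3.rhoeps} in the form $\pa_t\widehat\rho_i^\eps = -\diver(\widehat\rho_i^\eps\widehat v^\eps) - \diver(\widehat\rho_i^\eps\widehat u_i^\eps)$, this rewrites as $\widehat\rho_i^\eps(\pa_t\widehat v^\eps + \widehat v^\eps\cdot\na\widehat v^\eps) - \widehat v^\eps\diver(\widehat\rho_i^\eps\widehat u_i^\eps)$. Similarly, the friction term: since $\widehat v_i^\eps - \widehat v_j^\eps = \widehat u_i^\eps - \widehat u_j^\eps$, the sum $\frac1\eps\sum_j b_{ij}\widehat\rho_i^\eps\widehat\rho_j^\eps(\widehat v_i^\eps - \widehat v_j^\eps)$ equals $\frac1\eps\sum_j b_{ij}\widehat\rho_i^\eps\widehat\rho_j^\eps(\widehat u_i^\eps - \widehat u_j^\eps) = -\widehat d_i^\eps$ by \eqref{3.u}. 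Matching everything against \eqref{3.rhoveps2}, the remainder $\widehat R_i^\eps$ must equal $\pa_t(\widehat\rho_i^\eps\widehat u_i^\eps) + \diver(\widehat\rho_i^\eps\widehat u_i^\eps\otimes\widehat v^\eps + \widehat\rho_i^\eps\widehat v^\eps\otimes\widehat u_i^\eps) + \diver(\widehat\rho_i^\eps\widehat u_i^\eps\otimes\widehat u_i^\eps) - \widehat v^\eps\diver(\widehat\rho_i^\eps\widehat u_i^\eps)$, which is exactly \eqref{3.Reps}. The only nontrivial check is that what is left over on the original side combines correctly: one needs the identity $\widehat\rho_i^\eps(\pa_t\widehat v^\eps + \widehat v^\eps\cdot\na\widehat v^\eps) + \widehat\rho_i^\eps\na\frac{\delta\E}{\delta\rho_i}(\widehat\brho^\eps) = \widehat d_i^\eps$, obtained from the barycentric momentum balance \eqref{3.rhoveps} divided by $\widehat\rho^\eps$ and multiplied by $\widehat\rho_i^\eps$, exactly as in the formal computation in section \ref{sec.CE} leading to \eqref{2.deps}. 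I would reproduce that short computation here.

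There is really no serious obstacle: the lemma is a bookkeeping identity, and the main point to be careful about is distributing the product rule consistently and invoking the continuity equation \eqref{3.rhoeps} to convert $\pa_t(\widehat\rho_i^\eps\widehat v^\eps)$ into the material-derivative form, together with the constraint \eqref{3.u} to identify the friction term with $-\widehat d_i^\eps$. The equivalence is two-directional because every manipulation used (substitution, product rule, using \eqref{3.rhoeps}, using \eqref{3.u}) is reversible. I would conclude by noting that, conversely, given a solution of \eqref{3.rhoeps2}--\eqref{3.rhoveps2}, defining $\widehat v^\eps := \sum_i\widehat m_i^\eps / \widehat\rho^\eps$ and $\widehat u_i^\eps := \widehat v_i^\eps - \widehat v^\eps$ and reversing the above computation recovers \eqref{3.rhoeps}--\eqref{3.rhoveps}, which completes the proof.
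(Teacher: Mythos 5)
Your proposal is correct and follows essentially the same route as the paper: both reduce the momentum identity to the two substitutions that (i) identify the friction term with $-\widehat d_i^\eps$ via \eqref{3.u} and (ii) use the nonconservative (material-derivative) form of the barycentric balance \eqref{3.rhoveps}, together with the continuity equation \eqref{3.rhoeps}, to cancel the $\widehat v^\eps$-only part. The only point to make explicit when writing it up is that the step converting \eqref{3.rhoveps} to material-derivative form requires $\pa_t\widehat\rho^\eps+\diver(\widehat\rho^\eps\widehat v^\eps)=0$, which you should derive by summing \eqref{3.rhoeps} over $i$ and invoking the constraint $\sum_{j}\widehat\rho_j^\eps\widehat u_j^\eps=0$ from \eqref{3.u}.
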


\begin{proof}
Equation \eqref{3.rhoeps2} follows directly from \eqref{3.rhoeps} and the definition 
$\widehat v_i^\eps=\widehat v^\eps+\widehat u_i^\eps$. 
We write the evolution of the momentum in a similar format as \eqref{3.rhov},
\begin{align*}
  \pa_t(\widehat\rho_i^\eps\widehat v_i^\eps) + \diver(\widehat\rho_i^\eps
	\widehat v_i^\eps\otimes\widehat v_i^\eps)
	= -\widehat\rho_i^\eps\na\frac{\delta\E}{\delta\rho_i}(\widehat\brho^\eps)
	- \frac{1}{\eps}\sum_{j=1}^n b_{ij}\widehat\rho_i^\eps \widehat\rho_j^\eps
	(\widehat v_i^\eps-\widehat v_j^\eps) + \widehat R_i^\eps,
\end{align*}
where $\widehat R_i^\eps$ contains the remaining terms:
\begin{align}
  \widehat R_i^\eps &= \pa_t(\widehat\rho_i^\eps\widehat v^\eps)
	+ \diver(\widehat\rho_i^\eps\widehat v^\eps\otimes\widehat v^\eps)
	+ \widehat\rho_i^\eps\na\frac{\delta\E}{\delta\rho_i}(\widehat\brho^\eps) 
	+ \frac{1}{\eps}\sum_{j=1}^n b_{ij}\widehat\rho_i^\eps
	\widehat\rho_j^\eps(\widehat v_i^\eps-\widehat v_j^\eps) \nonumber \\
	&\phantom{xx}{}+ \pa_t(\widehat\rho_i^\eps\widehat u_i^\eps)
	+ \diver(\widehat\rho_i^\eps\widehat u_i^\eps\otimes\widehat v^\eps
	+ \widehat\rho_i^\eps\widehat v^\eps\otimes\widehat u_i^\eps)
	+ \diver(\widehat\rho_i^\eps\widehat u_i^\eps\otimes\widehat u_i^\eps).
	\label{3.aux1}
\end{align}
It remains to show that this expression equals \eqref{3.Reps}.
The last three terms are already in the desired form.
By \eqref{3.u}, we have
$$
  \frac{1}{\eps}\sum_{j=1}^n b_{ij}\widehat\rho_i^\eps\widehat\rho_j^\eps
	(\widehat v_i^\eps-\widehat v_j^\eps) 
	= \frac{\widehat\rho_i^\eps}{\widehat\rho^\eps}\sum_{j=1}^n\widehat\rho_j^\eps
	\nabla \frac{\delta\E}{\delta\rho_j}(\widehat\brho^\eps) 
	- \widehat\rho_i^\eps \nabla \frac{\delta\E}{\delta\rho_i}(\widehat\brho^\eps).
$$
Therefore, we can replace the third and fourth terms in $\widehat R_i^\eps$ by
\begin{equation}\label{3.aux2}
  \frac{\widehat\rho_i^\eps}{\widehat\rho^\eps}\sum_{j=1}^n\widehat\rho_j^\eps
	\na\frac{\delta\E}{\delta\rho_j}(\widehat\brho^\eps).
\end{equation}
We reformulate the first and second terms in $\widehat R_i^\eps$.
Adding \eqref{3.rhoeps} over $i=1,\ldots,n$ and using 
$\sum_{j=1}^n\widehat\rho_j^\eps\widehat u_j^\eps=0$, we deduce that
$\pa_t\widehat\rho^\eps + \diver(\widehat\rho^\eps\widehat v^\eps)=0$.
This equation and \eqref{3.rhoeps}, \eqref{3.rhoveps} show that
\begin{align*}
  \pa_t & (\widehat\rho_i^\eps\widehat v^\eps) + \diver(\widehat\rho_i^\eps
	\widehat v^\eps\otimes\widehat v^\eps)
	= \big(\pa_t\widehat\rho_i^\eps + \diver(\widehat\rho_i^\eps \widehat v^\eps)
	\big)\widehat v^\eps + \widehat\rho_i^\eps\big(\pa_t\widehat v^\eps
	+ \widehat v^\eps\cdot\na\widehat v^\eps\big) 
\\
	&= -\diver(\widehat\rho_i^\eps \widehat u_i^\eps)\widehat v^\eps
	+ \frac{\widehat\rho_i^\eps}{\widehat\rho^\eps}
	\big(\pa_t(\widehat\rho^\eps\widehat v^\eps) - (\pa_t\widehat\rho^\eps)\widehat v^\eps
  + \widehat\rho^\eps\widehat v^\eps\cdot\na\widehat v^\eps\big) 
\\
	&= -\diver(\widehat\rho_i^\eps \widehat u_i^\eps)\widehat v^\eps
	+ \frac{\widehat\rho_i^\eps}{\widehat\rho^\eps}
	\big(\pa_t(\widehat\rho^\eps\widehat v^\eps) + \diver(\widehat\rho^\eps
	\widehat v^\eps\otimes\widehat v^\eps)\big) \\
	&= -\diver(\widehat\rho_i^\eps \widehat u_i^\eps)\widehat v^\eps
	- \frac{\widehat\rho_i^\eps}{\widehat\rho^\eps}\sum_{j=1}^n\widehat\rho_j^\eps
	\na\frac{\delta\E}{\delta\rho_j}(\widehat\brho^\eps),
\end{align*}
where we used \eqref{3.rhoveps} in the last step. The last term cancels with
\eqref{3.aux2}, showing that \eqref{3.aux1} reduces to  \eqref{3.Reps}.
\end{proof}

We need later the explicit expressions of the variational derivatives of
$\E$ and $S_i$.

\begin{lemma}[Variational derivatives of $\E$]\label{lem.second}
Let $\E$ be given by \eqref{3.assumptE}. Then,
for test functions $\psi_i$ and $\phi_i$,
\begin{align*}
  \sum_{i=1}^n\bigg\langle\frac{\delta\E}{\delta\rho_i}(\brho),\psi_i\bigg\rangle
	&= \int_{\R^3}\sum_{i=1}^n\bigg(\frac{\pa F_i}{\pa\rho_i}(\rho_i,\na\rho_i)\psi_i
	+ \frac{\pa F_i}{\pa q_i}(\rho_i,\na\rho_i)\cdot\na\psi_i\bigg)dx, \\
	\sum_{i=1}^n\bigg\langle\!\!\!\bigg\langle\frac{\delta^2\E}{\delta\rho_i^2}
	(\brho),(\psi_i,\phi_i)\bigg\rangle\!\!\!\bigg\rangle
	&= \sum_{i=1}^n\int_{\R^3}(\phi_i,\na\phi_i)
	\begin{pmatrix} \pa^2 F_i/\pa\rho_i^2 & \pa^2 F_i/\pa\rho_i\pa q_i \\
	\pa^2 F_i/\pa\rho_i\pa q_i & \pa^2 F_i/\pa q_i^2 \end{pmatrix}
	\begin{pmatrix} \psi_i \\ \na\psi_i \end{pmatrix} dx, 
\end{align*}
\end{lemma}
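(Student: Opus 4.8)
The plan is to compute the first and second Gâteaux derivatives of $\E$ directly, differentiating under the integral sign and invoking the chain rule. For the first identity, fix smooth compactly supported test functions $\psi_i$ and consider $t\mapsto\E(\brho+t\bm\psi)=\int_{\R^3}\sum_i F_i(\rho_i+t\psi_i,\na\rho_i+t\na\psi_i)\,dx$. Since each $F_i$ depends only on the $i$-th density and its gradient, the chain rule yields
$$
  \frac{d}{dt}\Big|_{t=0}\E(\brho+t\bm\psi)
	= \int_{\R^3}\sum_{i=1}^n\bigg(\frac{\pa F_i}{\pa\rho_i}(\rho_i,\na\rho_i)\,\psi_i
	+ \frac{\pa F_i}{\pa q_i}(\rho_i,\na\rho_i)\cdot\na\psi_i\bigg)dx,
$$
which is exactly the asserted formula once $\sum_i\langle\delta\E/\delta\rho_i,\psi_i\rangle$ is read as this derivative. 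Legitimacy of the interchange of $d/dt$ and $\int$ follows from $F_i\in C^2$, local boundedness of its first derivatives, and the compact support of the $\psi_i$ (alternatively suitable decay hypotheses on $\brho$); I would state these as standing regularity assumptions. One may optionally integrate by parts to exhibit the strong form $\delta\E/\delta\rho_i=\pa F_i/\pa\rho_i-\diver(\pa F_i/\pa q_i)$, but the weak formulation above is what is used in the sequel.

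For the second identity, perturb in two directions and take the mixed derivative: with test functions $\bm\psi,\bm\phi$, set $g(s,t)=\E(\brho+t\bm\psi+s\bm\phi)$ and compute $\pa_s\pa_t g(0,0)$. Differentiating the displayed expression once more in the direction $\bm\phi$ and using the symmetry of second partials to identify $\pa^2F_i/\pa\rho_i\pa q_i$ in both mixed terms gives
$$
  \sum_{i=1}^n\int_{\R^3}\bigg(\frac{\pa^2F_i}{\pa\rho_i^2}\,\phi_i\psi_i
	+ \frac{\pa^2F_i}{\pa\rho_i\pa q_i}\cdot(\phi_i\na\psi_i+\psi_i\na\phi_i)
	+ \na\phi_i\cdot\frac{\pa^2F_i}{\pa q_i^2}\na\psi_i\bigg)dx,
$$
which is precisely the claimed block-matrix form with the symmetric Hessian $\begin{pmatrix} \pa^2F_i/\pa\rho_i^2 & \pa^2F_i/\pa\rho_i\pa q_i \\ \pa^2F_i/\pa\rho_i\pa q_i & \pa^2F_i/\pa q_i^2 \end{pmatrix}$ acting on $(\psi_i,\na\psi_i)$ and paired with $(\phi_i,\na\phi_i)$. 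The key structural point, and the reason the expression is diagonal in the species index, is that $F_i=F_i(\rho_i,\na\rho_i)$ carries no cross-dependence on $\rho_j$ for $j\neq i$, so no mixed terms $\pa^2F_i/\pa\rho_i\pa\rho_j$ occur.

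I do not anticipate a genuine obstacle: the statement is a direct consequence of differentiation under the integral sign. The only care needed is (i) to impose enough regularity on the $F_i$ (we assume $F_i\in C^2$) together with integrability/support of the arguments so that the interchange of differentiation and integration is justified, and (ii) to keep track of the vectorial nature of $q_i=\na\rho_i\in\R^3$, so that $\pa F_i/\pa q_i$ is a vector, $\pa^2F_i/\pa q_i^2$ a $3\times3$ matrix, and $\pa^2F_i/\pa\rho_i\pa q_i$ a vector in $\R^3$, and the dot products and matrix products in the formulas are read accordingly.
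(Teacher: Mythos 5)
Your proposal is correct and follows essentially the same route as the paper: both compute the first variational derivative as $\frac{d}{d\tau}\E(\brho+\tau\bm\psi)\big|_{\tau=0}$ via the chain rule under the integral, and obtain the second by differentiating that expression once more in the direction $\bm\phi$ (your mixed derivative $\pa_s\pa_t g(0,0)$ is the same computation as the paper's iterated derivative). Your added remarks on justifying the interchange of derivative and integral and on the vectorial bookkeeping for $q_i$ are sensible but not part of the paper's argument.
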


\begin{proof}
We compute the first variational derivative with respect to the test function
$\bm\psi=(\psi_1,\ldots,\psi_n)$:
\begin{align*}
  \sum_{i=1}^n\bigg\langle\frac{\delta\E}{\delta\rho_i}(\brho),\psi_i\bigg\rangle
	&= \frac{d}{d\tau}\E(\brho+\tau\bm\psi)\bigg|_{\tau=0}
	= \frac{d}{d\tau}\int_{\R^3}\sum_{i=1}^n F_i(\rho_i+\tau\psi_i,
	\na\rho_i+\tau\na\psi_i)dx\bigg|_{\tau=0} \\
  &= \int_{\R^3}\sum_{i=1}^n\bigg(\frac{\pa F_i}{\pa\rho_i}(\rho_i,\na\rho_i)\psi_i
	+ \frac{\pa F_i}{\pa q_i}(\rho_i,\na\rho_i)\cdot\na\psi_i\bigg)dx.
\end{align*}
Next, we calculate the second variational derivative,
where $\bm\phi=(\phi_1,\ldots,\phi_n)$:
\begin{align*}
  \sum_{i=1}^n&\bigg\langle\!\!\!\bigg\langle
	\frac{\delta^2\E}{\delta\rho_i^2}
	(\brho),(\psi_i,\phi_i)\bigg\rangle\!\!\!\bigg\rangle
	 = \frac{d}{d\tau}\bigg\langle\sum_{i=1}^n\frac{\delta\E}{\delta\rho_i}
	(\brho+\tau\bm\phi), \psi_i\bigg\rangle	\bigg|_{\tau=0}	\\
	&= \frac{d}{d\tau}\int_{\R^3}\sum_{i=1}^n\bigg(\frac{\pa F_i}{\pa\rho_i}
	\big(\rho_i+\tau\phi_i,\na(\rho_i+\tau\phi_i)\big)\psi_i \\
	&\phantom{xx}{}
	- \frac{\pa F_i}{\pa q_i}\big(\rho_i+\tau\phi_i,\na(\rho_i+\tau\phi_i)\big)
	\cdot\na\psi_i\bigg)dx\bigg|_{\tau=0} \\
	&= \sum_{i=1}^n\bigg(\frac{\pa^2 F_i}{\pa\rho_i^2}(\brho)\phi_i\psi_i
	+ \psi_i\frac{\pa^2 F_i}{\pa\rho_i\pa q_i}(\brho)\cdot\na\phi_i
	+ \phi_i\frac{\pa^2 F_i}{\pa q_i\pa\rho_i}(\brho)\cdot\na\psi_i \\
	&\phantom{xx}{}+ \frac{\pa^2 F_i}{\pa q_i^2}:(\na\phi_i\otimes\na\psi_i)\bigg)dx \\
  &= \sum_{i=1}^n\int_{\R^3}(\phi_i,\na\phi_i)
	\begin{pmatrix} \pa^2 F_i/\pa\rho_i^2 & \pa^2 F_i/\pa\rho_i\pa q_i \\
	\pa^2 F_i/\pa\rho_i\pa q_i & \pa^2 F_i/\pa q_i^2 \end{pmatrix}
	\begin{pmatrix} \psi_i \\ \na\psi_i \end{pmatrix} dx.
\end{align*}
This finishes the proof.
\end{proof}

Next, we define the relative potential energy
\begin{align*}
  \E(\brho|\widehat\brho) &= \E(\brho) - \E(\widehat\brho)
	- \sum_{i=1}^n\bigg\langle\frac{\delta\E}{\delta\rho_i}(\widehat\brho),
	\rho_i-\widehat\rho_i\bigg\rangle.
\end{align*}
Taking $\psi_i=\phi_i = \rho_i - \widehat\rho_i$ in the above lemma leads to
the formula
\begin{align*} 
  \E(\brho|\widehat\brho)
	&= \int_{\R^3}\sum_{i=1}^n F_i(\rho_i,\na\rho_i|\widehat\rho_i,\na\widehat\rho_i)dx.
\end{align*}
We also define the total energy 
\begin{align}\label{3.deftot}
  \E_{\rm tot}(\brho,\bm{m}) 
	= \E(\brho)+ \int_{\R^3} \sum_{i=1}^n\frac{1}{2} \rho_i |v_i|^2 dx
	= \int_{\R^3}\sum_{i=1}^n\bigg(F_i(\rho_i,\na\rho_i)
	+ \frac12\rho_i|v_i|^2\bigg)dx
\end{align}
and the relative total energy
\begin{align}
	\E_{\rm tot}&(\brho,\bm{m} | \widehat\brho, \bm{\widehat m})=
	\E_{\rm tot}(\brho,\bm{m} ) - \E_{\rm tot}(\widehat\brho,\bm{\widehat m} )
	- \sum_{i=1}^n\bigg\langle
	\frac{\delta\E_{\rm tot}}{\delta\rho_i}(\widehat\brho,\bm{\widehat m}),
	\rho_i-\widehat\rho_i\bigg\rangle \nonumber \\
	& -\sum_{i=1}^n\bigg\langle
	\frac{\delta\E_{\rm tot}}{\delta m_i}(\widehat\brho,\bm{\widehat m}),
	\rho_i v_i-\widehat\rho_i\widehat v_i\bigg\rangle 
	= \int_{\R^3}\sum_{i=1}^n \bigg(F_i(\rho_i,\na\rho_i
	|\widehat\rho_i,\na\widehat\rho_i) 
	+ \frac{1}{2} \rho_i |v_i-\widehat v_i|^2 \bigg)dx. \label{eq:Etrel}
\end{align}

\subsection{Relative energy inequality}\label{sec.relent}

We compare a weak solution to the original system \eqref{3.rho}-\eqref{3.rhov} 
with a strong solution to the approximate system \eqref{3.rhoeps2}-\eqref{3.rhoveps2}
via a relative energy inequality. 
First, we make precise the notion of weak solution to the original system.

\begin{definition}[Weak and dissipative weak solutions]\label{def.weak}
A function $(\brho^\eps,\bm{v}^\eps)$ is called a 
{\em weak solution} to \eqref{3.rho}-\eqref{3.rhov} if for all $i=1,\ldots,n$,
\begin{align*}
  & 0\le \rho_i^\eps\in C^0([0,\infty);L^1(\R^3)), \quad 
	\rho_i^\eps v_i^\eps\in C^0([0,\infty);L^1(\R^3;\R^3)), \\
  & \rho_i^\eps v_i^\eps\otimes v_i^\eps,\ H_i^\eps\in 
	L_{\rm loc}^1([0,\infty)\times\R^3;\R^{3\times 3}), \\
	& s_i^\eps\in L^1_{\rm loc}([0,\infty)\times\R^3), \quad
	r_i^\eps\in L_{\rm loc}^1([0,\infty)\times\R^3;\R^3),
\end{align*}
and $(\brho^\eps,\bm{v}^\eps)$ solves for 
$\psi_i\in C^\infty_0([0,\infty);C^\infty(\R^3))$
and $\phi_i\in C^{2}_0([0,\infty);C^\infty(\R^3;\R^3))$,
\begin{align*}
  & -\int_0^\infty\int_{\R^3}\sum_{i=1}^n(\rho_i^\eps\pa_t\psi_i + \rho_i^\eps v_i^\eps
	\cdot\na\psi_i)dxdt = \int_{\R^3}\sum_{i=1}^n\rho_i^\eps(x,0)\psi_i(x,0)dx, \\
	& -\int_0^\infty\int_{\R^3}\sum_{i=1}^n\big(\rho_i^\eps v_i^\eps\cdot\pa_t\phi_i
	+ \rho_i^\eps v_i^\eps \otimes v_i^\eps : \na \phi_i 
    + s_i^\eps\diver\phi_i + r_i^\eps\cdot\na\diver\phi_i
	+ H_i^\eps:\na\phi_i\big)dxdt \\
	&\phantom{xxxx}{}= \int_{\R^3}(\rho_i^\eps v_i^\eps)(x,0)\cdot\phi_i(x,0)dx
	- \frac{1}{\eps}\int_0^\infty\int_{\R^3}\sum_{i,j=1}^n
	b_{ij}\rho_i^\eps\rho_j^\eps(v_i^\eps-v_j^\eps)\cdot\phi_i dxdt.
\end{align*}
Moreover, if additionally 
$\sum_{i=1}^n(F_i(\rho_i^\eps,\na\rho_i^\eps)+\frac12\rho_i^\eps|v_i^\eps|^2) 
\in C^0([0,\infty);L^1(\R^3))$ and the integrated energy inequality
\begin{align}
  -\int_0^\infty\E_{\rm tot}(\brho^\eps(t),\bm{m}^\eps(t))\theta'(t)dt
	&+ \frac{1}{2\eps}\int_0^\infty\int_{\R^3}\sum_{i,j=1}^n 
	b_{ij}\rho_i^\eps\rho_j^\eps|v_i^\eps-v_j^\eps|^2\theta(t)dxdt \nonumber \\
	&\le \E_{\rm tot}(\brho^\eps(0),\bm{m}^\eps(0))\theta(0) \label{3.dissweak}
\end{align}
holds for any $\theta\in W^{1,\infty}([0,\infty))$ compactly supported in $[0,\infty)$,
then we call $(\brho^\eps,\bm{v}^\eps)$ a {\em dissipative weak solution}.
\end{definition}

We impose the following assumption:
\begin{enumerate}[label=(\bf A\arabic*)]
\item \label{A1} The dissipative weak solution 
$(\brho^\eps,\bm{v}^\eps)$ to \eqref{3.rho}-\eqref{3.rhov} 
has finite total mass and finite total energy, i.e., for any $T>0$,
there exists a constant $K>0$ independent of $\eps$ such that
$$
  \sup_{0<t<T}\int_{\R^3}\sum_{i=1}^n\rho_i^\eps dx \le K, \quad
	\sup_{0<t<T}\int_{\R^3}\sum_{i=1}^n\bigg(F_i(\rho_i^\eps,
  \na\rho_i^\eps)+\frac12\rho_i^\eps|v_i^\eps|^2\bigg)dx \le K. 
$$
\end{enumerate}

We proceed by establishing the relative energy inequality.

\begin{proposition}[Relative energy inequality]\label{prop.rei}
Let $(\brho^\eps,\bm{v}^\eps)$ be a dissipative weak
solution to \eqref{3.rho}-\eqref{3.rhov} satisfying  \ref{A1}, 
let $(\widehat\brho^\eps,\widehat{{v}}^\eps)$ be a strong solution to  
\eqref{3.rhoeps}, \eqref{3.rhoveps}, \eqref{3.u} such that
$\widehat\rho_i^\eps>0$ in $\R^3$, $t>0$, and let assumption \ref{N} on page \pageref{N}
holds. Then
\begin{align}
  \E_{\rm tot}&(\brho^\eps,\bm{m}^\eps|\widehat\brho^\eps,\widehat{\bm{m}}^\eps)(t)
	+ \frac{1}{2\eps}\int_0^t\int_{\R^3}\sum_{i,j=1}^n b_{ij}
	´\rho_i^\eps\rho_j^\eps\big|(v_i^\eps-v_j^\eps)-(\widehat v_i^\eps-\widehat v_j^\eps)
	\big|^2 dxds \nonumber \\
	&\le \E_{\rm tot}(\brho^\eps,\bm{m}^\eps|\widehat\brho^\eps,\widehat{\bm{m}}^\eps)(0)
	- \int_0^t\int_{\R^3}\sum_{i=1}^n\rho_i^\eps
	(v_i^\eps-\widehat v_i^\eps)\otimes(v_i^\eps-\widehat v_i^\eps):\na\widehat v_i^\eps
	dxds \nonumber \\
	&\phantom{xx}{}- \int_0^t\int_{\R^3}\sum_{i=1}^n\Big(s_i(\rho_i^\eps,\na\rho_i^\eps
	|\widehat\rho_i^\eps,\na\widehat\rho_i^\eps)\diver\widehat v_i^\eps
	+ r_i(\rho_i^\eps,\na\rho_i^\eps|\widehat\rho_i^\eps,\na\widehat\rho_i^\eps) 
	\cdot \nabla \diver \widehat v^\eps\nonumber\\
	&\phantom{xx}{}+H_i(\rho_i^\eps,\na\rho_i^\eps
	|\widehat\rho_i^\eps,\na\widehat\rho_i^\eps):\na\widehat v_i^\eps\Big)dxds
	-\int_0^t\int_{\R^3}\sum_{i=1}^n\frac{\rho_i^\eps}{\widehat\rho_i^\eps}
	\widehat R_i^\eps\cdot(v_i^\eps-\widehat v_i^\eps)dxds \nonumber \\
	&\phantom{xx}{}- \frac{1}{\eps}\int_0^t\int_{\R^3}\sum_{i,j=1}^n b_{ij}\rho_i^\eps
	(\rho_j^\eps-\widehat\rho_j^\eps)(v_i^\eps-\widehat v_i^\eps)
	\cdot(\widehat v_i^\eps-\widehat v_j^\eps)dxds, \label{3.rei}
\end{align}
where $s_i$, $r_i$, and $H_i$ are defined in \eqref{3.S} and $\widehat R_i^\eps$
is defined in \eqref{3.Reps}, and the relative stresses are given by
\begin{align*}
	g_i(\rho_i^\eps,q_i^\eps|\widehat\rho_i^\eps,\widehat q_i^\eps) 
	= g_i(\rho_i^\eps,q_i^\eps) - g_i(\widehat\rho_i^\eps,\widehat q_i^\eps) 
	- \frac{\partial g_i}{\partial \rho_i} (\widehat\rho_i^\eps,\widehat q_i^\eps)
	(\rho_i^\eps - \widehat\rho_i^\eps) 
	- \frac{\partial g_i}{\partial q_i} (\widehat\rho_i^\eps,\widehat q_i^\eps)
	\cdot(q_i^\eps - \widehat q_i^\eps),
\end{align*}
where $q_i^\eps = \nabla\rho_i^\eps, \widehat q_i^\eps=\nabla \widehat \rho_i^\eps$ 
and $g_i$ represents $s_i$, $r_i$, and $H_i$.
\end{proposition}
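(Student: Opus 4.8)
The plan is to derive \eqref{3.rei} by differentiating the relative total energy $\E_{\rm tot}(\brho^\eps,\bm m^\eps|\widehat\brho^\eps,\widehat{\bm m}^\eps)(t)$ in time and tracking all the contributions. Because the first solution is only a dissipative weak solution, one cannot literally differentiate; instead I would use the weak formulations from Definition \ref{def.weak} with carefully chosen test functions built from the strong solution $(\widehat\brho^\eps,\widehat{\bm v}^\eps)$, combined with the energy inequality \eqref{3.dissweak}. The natural decomposition is
$$
  \E_{\rm tot}(\brho^\eps,\bm m^\eps|\widehat\brho^\eps,\widehat{\bm m}^\eps)
  = \E_{\rm tot}(\brho^\eps,\bm m^\eps)
  - \E_{\rm tot}(\widehat\brho^\eps,\widehat{\bm m}^\eps)
  - \sum_i\Big\langle\tfrac{\delta\E_{\rm tot}}{\delta\rho_i}(\widehat\brho^\eps,\widehat{\bm m}^\eps),\rho_i^\eps-\widehat\rho_i^\eps\Big\rangle
  - \sum_i\Big\langle\tfrac{\delta\E_{\rm tot}}{\delta m_i}(\widehat\brho^\eps,\widehat{\bm m}^\eps),m_i^\eps-\widehat m_i^\eps\Big\rangle,
$$
and I would estimate/compute the time evolution of each of the four groups separately: the first from \eqref{3.dissweak}, the second from the (smooth) energy balance of the approximate system \eqref{3.rhoeps2}-\eqref{3.rhoveps2} including the remainder $\widehat R_i^\eps$, and the two cross terms from the weak formulations of \eqref{3.rho}-\eqref{3.rhov} tested against $\tfrac{\delta\E_{\rm tot}}{\delta\rho_i}(\widehat\brho^\eps,\widehat{\bm m}^\eps) = \tfrac{\partial\widehat F_i}{\partial\rho_i} - \tfrac12|\widehat v_i^\eps|^2$ (suitably interpreted, since $F_i$ depends on $\na\rho_i$) and against $\widehat v_i^\eps$.

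The key algebraic steps, in order, are: (i) write the convective and stress contributions in "relative" form — for the kinetic part this produces the term $-\int\sum_i\rho_i^\eps(v_i^\eps-\widehat v_i^\eps)\otimes(v_i^\eps-\widehat v_i^\eps):\na\widehat v_i^\eps$, using the continuity equations to cancel the lower-order pieces; (ii) handle the Korteweg stresses by using the divergence-form identity \eqref{3.ES}, so that $s_i$, $r_i$, $H_i$ appear, and assemble them into the relative stresses $g_i(\cdot|\cdot)$ via a Taylor-type bookkeeping, using Lemma \ref{lem.second} for the relative potential energy density $F_i(\rho_i,\na\rho_i|\widehat\rho_i,\na\widehat\rho_i)$; (iii) treat the friction term: the $1/\eps$ contribution from \eqref{3.dissweak} combined with the cross terms produces, after symmetrization in $i,j$ (using $b_{ij}=b_{ji}$), the dissipative quadratic term $\tfrac{1}{2\eps}\int\sum_{i,j}b_{ij}\rho_i^\eps\rho_j^\eps|(v_i^\eps-v_j^\eps)-(\widehat v_i^\eps-\widehat v_j^\eps)|^2$ on the left-hand side, plus the residual friction term $-\tfrac1\eps\int\sum_{i,j}b_{ij}\rho_i^\eps(\rho_j^\eps-\widehat\rho_j^\eps)(v_i^\eps-\widehat v_i^\eps)\cdot(\widehat v_i^\eps-\widehat v_j^\eps)$ that cannot be absorbed because the densities differ; (iv) collect the remainder contribution $-\int\sum_i\tfrac{\rho_i^\eps}{\widehat\rho_i^\eps}\widehat R_i^\eps\cdot(v_i^\eps-\widehat v_i^\eps)$ coming from $\widehat R_i^\eps$ in \eqref{3.rhoveps2}. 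Throughout, assumption \ref{A1} guarantees integrability so that all the boundary-in-time and integration-by-parts manipulations are justified, and the sign in \eqref{3.dissweak} is what turns the computation into an inequality rather than an identity.

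The main obstacle is the Korteweg stress term, i.e.\ correctly reorganizing the $s_i,r_i,H_i$ contributions into the relative-stress form with the extra $r_i$ term paired against $\na\diver\widehat v_i^\eps$. Since $F_i$ depends on $\na\rho_i$, the variational derivative $\delta\E/\delta\rho_i$ is second order, and testing the weak momentum equation against a function built from $\widehat\rho_i^\eps$ and $\na\widehat\rho_i^\eps$ requires two integrations by parts; one must check that the weak solution's regularity (the $L^1_{\rm loc}$ membership of $s_i^\eps,r_i^\eps,H_i^\eps$ in Definition \ref{def.weak}) is exactly what makes those integrations by parts legitimate. I would follow the single-species computation in \cite{GLT17} closely here, adapting it term-by-term to the multicomponent setting; the friction term is new but its treatment via symmetrization is comparatively routine. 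A secondary technical point is the regularization needed to use $\widehat v_i^\eps$-dependent quantities as admissible test functions in the weak formulation (they are smooth in $x$ but one must ensure compact support or adequate decay in $x$, which again follows from \ref{A1} by a cutoff argument).
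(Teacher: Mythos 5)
Your strategy coincides with the paper's own proof: the same four-part decomposition of $\E_{\rm tot}(\brho^\eps,\bm m^\eps|\widehat\brho^\eps,\widehat{\bm m}^\eps)$, the integrated energy inequality \eqref{3.dissweak} with a time cutoff for the weak solution, the energy balance (carrying the remainder $\widehat R_i^\eps$) for the strong solution, testing the difference equations against $\widehat v_i^\eps$ and against the first variation of the strong solution's energy, the identity linking $\delta^2\E/\delta\rho_i^2$ to $\delta S_i/\delta\rho_i$ to produce the relative stresses, and symmetrization in $i,j$ of the friction terms. The only detail to make explicit is that the scalar test function must be $\theta(s)\big(\partial\widehat F_i/\partial\rho_i-\diver(\partial\widehat F_i/\partial q_i)-\tfrac12|\widehat v_i^\eps|^2\big)$, i.e.\ including the divergence term coming from the $\na\rho_i$-dependence of $F_i$, which you anticipate with your remark ``suitably interpreted.''
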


\begin{proof}
The proof is similar to the proof of Theorem 1
in \cite{GLT17}, but we need to take care of the friction terms.
To simplify the notation, we drop the superscript $\eps$. Recall that the 
relative total energy $\E_{\rm tot}(\brho,\bm{m} | \widehat\brho, \bm{\widehat m})$ 
defined by \eqref{eq:Etrel} has four parts, $\E_{\rm tot}(\brho,\bm{m})$, 
$-\E_{\rm tot}(\widehat\brho,\bm{\widehat m})$, 
$-\sum_{i=1}^n\langle{(\delta\E_{\rm tot}}/{\delta\rho_i})
(\widehat\brho,\bm{\widehat m})$, $\rho_i-\widehat\rho_i\rangle$, and 
$-\sum_{i=1}^n\langle({\delta\E_{\rm tot}}/{\delta m_i})
(\widehat\brho,\bm{\widehat m}),
\rho_i v_i-\widehat\rho_i\widehat v_i\rangle$. We first give the energy 
inequalities for the first two terms and then use the weak formulations 
to calculate the last two terms.

{\em Step 1: The energy inequalities.}
Introducing the test function 
\begin{equation}\label{3.theta}
  \theta(s) = \left\{\begin{array}{ll}
	1 &\quad\mbox{for }0\le s<t, \\
	(t-s)/\delta+1 &\quad\mbox{for }t\le s< t+\delta, \\
	0 &\quad\mbox{for }s>t+\delta,
	\end{array}\right.
\end{equation}
in the integrated energy inequality \eqref{3.dissweak}
and passing to the limit $\delta\to 0$, we obtain 
\begin{align}
	\E_{\rm tot}(\brho(t),\bm{m}(t))
	+ \frac{1}{2\eps}\int_0^t\int_{\R^3}b_{ij}\rho_i\rho_j
	|v_i-v_j|^2 dxds \le \E_{\rm tot}(\brho(0),\bm{m }(0)). \label{eq:Energy}
\end{align}
To show the energy identity for the strong solution $(\widehat\brho^\eps,
\widehat{\bm{v}}^\eps)$, we write \eqref{3.rhoveps2} in nonconservative form:
$$
  \pa_t\widehat v_i + \widehat v_i\cdot\na\widehat v_i 
	= -\na\frac{\delta\E}{\delta\rho_i}(\widehat\brho) 
	- \frac{1}{\eps}\sum_{j=1}^n b_{ij}\widehat\rho_j
	(\widehat v_i-\widehat v_j)	+ \frac{\widehat R_i}{\widehat\rho_i}.
$$
We multiply this equation by $\widehat\rho_i\widehat v_i$, multiply \eqref{3.rhoeps2}
by $\frac12|\widehat v_i^\eps|^2$, and add the resulting equations:
\begin{equation}\label{3.aux3}
  \frac12\pa_t(\widehat\rho_i|\widehat v_i|^2)
	+ \frac12\diver(\widehat\rho_i \widehat v_i|\widehat v_i|^2)
	= -\widehat\rho_i\widehat v_i\cdot\na\frac{\delta\E}{\delta\rho_i}(\widehat\brho)
	- \frac{1}{\eps}\widehat v_i\cdot\sum_{j=1}^n b_{ij}\widehat\rho_i\widehat\rho_j
	(\widehat v_i-\widehat v_j) + \widehat v_i\cdot\widehat R_i.
\end{equation}
Furthermore, we deduce from \eqref{3.rhoeps2} that
$$
  \frac{d}{dt}\E(\widehat\brho) 
	= \sum_{i=1}^n\bigg\langle\frac{\delta\E}{\delta\rho_i}(\widehat\brho),\pa_t\widehat\rho_i
	\bigg\rangle = -\sum_{i=1}^n\bigg\langle\frac{\delta\E}{\delta\rho_i}(\widehat\brho),
	\diver(\widehat\rho_i\widehat v_i)\bigg\rangle
	= \int_{\R^3}\sum_{i=1}^n\na\frac{\delta\E}{\delta\rho_i}(\widehat\brho)
	\cdot(\widehat\rho_i\widehat v_i)dx.
$$
Integrating \eqref{3.aux3}, summing over $i=1,\ldots,n$, and inserting the 
previous identity yields
\begin{equation*}
  \frac{d}{dt}\bigg(\E(\widehat\brho) + 
	\frac12\int_{\R^3}\sum_{i=1}^n\widehat\rho_i|\widehat v_i|^2dx\bigg)
	= -\frac{1}{\eps}\int_{\R^3}\sum_{i,j=1}^n b_{ij}\widehat\rho_i\widehat\rho_j
	(\widehat v_i-\widehat v_j)\cdot\widehat v_i dx 
	+ \int_{\R^3} \sum_{i=1}^n\widehat R_i\cdot\widehat v_i dx.
\end{equation*}
The symmetry of $(b_{ij})$ and integration of the above equality over $(0,t)$ 
lead to the following energy equality:
\begin{align}\label{eq:Energy2}
	  & \E_{\rm tot}(\widehat\brho(t),\widehat{\bm{m}}(t))
	+ \frac{1}{2\eps}\int_0^t\int_{\R^3}\sum_{i,j=1}^n b_{ij}\widehat\rho_i
	\widehat\rho_j|\widehat v_i-\widehat v_j|^2 dxds \nonumber\\
	&\phantom{xx}{}= \E_{\rm tot}(\widehat\brho(0),\widehat{\bm{m}}(0)) 
	+ \int_0^t\int_{\R^3}\sum_{i=1}^n\widehat R_i\cdot\widehat v_i dxds.
\end{align}

{\em Step 2: Equation for the difference.}
We proceed to calculate 
$$
  -\sum_{i=1}^n\bigg\langle\frac{\delta\E_{\rm tot}}{\delta\rho_i}(\widehat\brho,
	\bm{\widehat m}),\rho_i-\widehat\rho_i\bigg\rangle \quad \mbox{and} \quad 
  -\sum_{i=1}^n\bigg\langle\frac{\delta\E_{\rm tot}}{\delta m_i}(\widehat\brho,\bm{\widehat m}),
	\rho_i v_i-\widehat\rho_i\widehat v_i\bigg\rangle.
$$
Following the definition of the weak solutions to \eqref{3.rho}-\eqref{3.rhov} and 
\eqref{3.rhoeps2}-\eqref{3.rhoveps2}, the differences of the solutions 
$(\rho_i-\widehat\rho_i, v_i - \widehat v_i)$ satisfy
\begin{align*}
  -\int_0^\infty&\int_{\R^3}\sum_{i=1}^n\big((\rho_i-\widehat\rho_i)\pa_s\psi_i
	+ (\rho_i v_i-\widehat\rho_i\widehat v_i)\cdot\na\psi_i\big)dxds \\
	&= \int_{\R^3}\sum_{i=1}^n\big(\rho_i(x,0)-\widehat\rho_i(x,0)\big)\psi_i(x,0)dx, \\
	-\int_0^\infty&\int_{\R^3}\sum_{i=1}^n\big((\rho_i v_i-\widehat\rho_i\widehat v_i)
	\cdot\pa_s\phi_i 
	+ (\rho_i v_i\otimes v_i - \widehat\rho_i\widehat v_i\otimes\widehat v_i):\na\phi_i \\
	&\phantom{xx}{}+ (s_i-\widehat s_i)\diver\phi_i + (H_i-\widehat H_i):\na\phi_i
	+ (r_i-\widehat r_i):\na\diver\phi_i\big)dxds \\
	&= \int_{\R^3}\sum_{i=1}^n((\rho_i v_i)(x,0) 
	- (\widehat\rho_i\widehat v_i)(x,0))\phi_i(x,0)dx \\
	 &\phantom{xx}{}-\frac{1}{\eps}\int_0^\infty\int_{\R^3}
	\sum_{i,j=1}^n b_{ij}\big(\rho_i\rho_j
	(v_i-v_j) - \widehat\rho_i\widehat\rho_j(\widehat v_i-\widehat v_j)\big)
	\cdot\phi_i dxds \\
	&\phantom{xx}{}- \int_0^\infty\int_{\R^3}\sum_{i=1}^n\widehat R_i\cdot\phi_i dxds,
\end{align*}
where $s_i=s_i(\rho_i,\na\rho_i)$, $\widehat s_i=s_i(\widehat\rho_i,\na\widehat\rho_i)$,
and similar for the other quantities. Taking the test functions
$$
  \psi_i(s) = \theta(s)\bigg(\frac{\pa\widehat F_i}{\pa\rho_i}
	- \diver\frac{\pa\widehat F_i}{\pa q_i} - \frac12|\widehat v_i|^2\bigg)(s), \quad
	\phi_i(s) = \theta(s)\widehat v_i(s),
$$
where $\theta$ is defined in \eqref{3.theta} and 
$\widehat F_i=F_i(\widehat\rho_i,\na\widehat\rho_i)$, the sum of the above 
equations becomes
\begin{align}
  \sum_{i=1}^n & \left(\left\langle\frac{\delta\E_{\rm tot}}{\delta\rho_i}
	(\widehat\brho,\bm{\widehat m}),\rho_i-\widehat\rho_i\right\rangle 
	+ \left\langle\frac{\delta\E_{\rm tot}}{\delta m_i}(\widehat\brho,\bm{\widehat m}),
	\rho_i v_i-\widehat\rho_i\widehat v_i\right\rangle\right) \bigg|_0^t \nonumber \\
  &= \int_{\R^3}\sum_{i=1}^n\bigg(\frac{\pa\widehat F_i}{\pa\rho_i}
	- \diver\frac{\pa\widehat F_i}{\pa q_i} - \frac12|\widehat v_i|^2\bigg)
	(\rho_i-\widehat\rho_i)\bigg|_{0}^t dx
	+ \int_{\R^3}\sum_{i=1}^n(\rho_i v_i-\widehat\rho_i\widehat v_i)\cdot\widehat v_i
	\bigg|_{0}^t dx \nonumber \\
	&= \int_0^t\int_{\R^3}\sum_{i=1}^n\bigg\{\pa_s\bigg(\frac{\pa\widehat F_i}{\pa\rho_i}
	- \diver\frac{\pa\widehat F_i}{\pa q_i} - \frac12|\widehat v_i|^2\bigg)
	(\rho_i-\widehat\rho_i) \nonumber  \\
	&\phantom{xx}{}+ (\rho_i v_i-\widehat\rho_i\widehat v_i)\cdot\na
	\bigg(\frac{\pa\widehat F_i}{\pa\rho_i}
	- \diver\frac{\pa\widehat F_i}{\pa q_i} - \frac12|\widehat v_i|^2\bigg)\bigg\}dxds 
	\nonumber \\
	&\phantom{xx}{}
	+ \int_0^t\int_{\R^3}\sum_{i=1}^n\big((\rho_i v_i-\widehat\rho_i\widehat v_i)
	\cdot\pa_s\widehat v_i + (\rho_i v_i\otimes v_i - \widehat\rho_i\widehat v_i\otimes
	\widehat v_i):\na\widehat v_i\big) dxds \nonumber  \\
	&\phantom{xx}{}+ \int_0^t\int_{\R^3}\sum_{i=1}^n\big(
	(s_i-\widehat s_i)\diver \widehat v_i
	+ (H_i-\widehat H_i):\na\widehat v_i + (r_i-\widehat r_i)\cdot\na\diver\widehat v_i
	\big)dxds \nonumber \\
	&\phantom{xx}{}
	- \frac{1}{\eps}\int_0^t\int_{\R^3}\sum_{i,j=1}^n b_{ij}\big(\rho_i\rho_j(v_i-v_j)
	- \widehat\rho_i\widehat\rho_j(\widehat v_i-\widehat v_j)\big)\cdot\widehat v_i
	dxds \nonumber \\
	&\phantom{xx}{}- \int_0^t\int_{\R^3}\sum_{i=1}^n\widehat R_i\cdot\widehat 
	v_i dxds \nonumber\\
	&=: I_1 + I_2+I_3+I_4+I_5.
	\label{3.aux}
\end{align}

We reorganize the term $I_1$ as follows:
\begin{align*}
	I_1 &= I_{11} + I_{12} + I_{13}, \quad\mbox{where} \\
	 I_{11} &= \int_0^t\int_{\R^3}\sum_{i=1}^n\pa_s\bigg(
	\frac{\pa\widehat F_i}{\pa\rho_i}	- \diver\frac{\pa\widehat F_i}{\pa q_i}\bigg)
	(\rho_i-\widehat\rho_i)dxds, \\
	I_{12} &= \int_0^t\int_{\R^3}\sum_{i=1}^n(\rho_i v_i-\widehat\rho_i\widehat v_i)
	\cdot\na\bigg(\frac{\pa\widehat F_i}{\pa\rho_i}
	- \diver\frac{\pa\widehat F_i}{\pa q_i}\bigg)dxds, \\
	I_{13} &= \int_0^t\int_{\R^3}\sum_{i=1}^n\bigg(-\frac12\pa_s\big(|\widehat v_i|^2\big)
	(\rho_i-\widehat\rho_i)
	-\frac12 (\rho_i v_i-\widehat\rho_i\widehat v_i)
	\cdot\na\big(|\widehat v_i|^2) \bigg) dxds, \\
\end{align*}

{\em Step 3: Calculation of $I_{11}$ and $I_{12}$.}
Using \eqref{3.rhoeps2}, we obtain:
\begin{align}
  I_{11}	&= \int_0^t\int_{\R^3}\sum_{i=1}^n
	\bigg\{\bigg(\frac{\pa^2\widehat F_i}{\pa\rho_i^2}\pa_s\widehat\rho_i
	+ \frac{\pa^2\widehat F_i}{\pa\rho_i\pa q_i}\cdot\pa_s\na\widehat\rho_i\bigg)
	(\rho_i-\widehat\rho_i) \nonumber \\
	&\phantom{xx}{}- \bigg(\diver \bigg(\frac{\pa^2\widehat F_i}{\pa q_i\pa\rho_i}
	\pa_s\widehat\rho_i \bigg)
	+ \diver\bigg(\frac{\pa^2\widehat F_i}{\pa q_i^2}\cdot\pa_s\na\widehat\rho_i\bigg)
	\bigg)(\rho_i-\widehat\rho_i)\bigg\}dxds \nonumber \\
	&= -\int_0^t\int_{\R^3}\sum_{i=1}^n\bigg\{\bigg(\frac{\pa^2\widehat F_i}{\pa\rho_i^2}
	\diver(\widehat\rho_i\widehat v_i)
	+ \frac{\pa^2\widehat F_i}{\pa\rho_i\pa q_i}\na\diver(\widehat\rho_i\widehat v_i)
	\bigg)(\rho_i-\widehat\rho_i) \nonumber \\
	&\phantom{xx}{}- \bigg(\diver\bigg(\frac{\pa^2\widehat F_i}{\pa q_i\pa\rho_i}
	\diver(\widehat\rho_i\widehat v_i)\bigg)
	+ \diver\bigg(\frac{\pa^2\widehat F_i}{\pa q_i^2}\cdot\na
	\diver(\widehat\rho_i\widehat v_i)\bigg)\bigg)(\rho_i-\widehat\rho_i)\bigg\}dxds 
	\nonumber \\
	&= -\int_0^t\int_{\R^3}\sum_{i=1}^n\bigg(\frac{\pa^2\widehat F_i}{\pa\rho_i^2}
	\diver(\widehat\rho_i\widehat v_i)(\rho_i-\widehat\rho_i)
	+ \frac{\pa^2\widehat F_i}{\pa\rho_i\pa q_i}\cdot\na\diver(\widehat\rho_i\widehat v_i)
	(\rho_i-\widehat\rho_i) \nonumber \\
	&\phantom{xx}{}+ \frac{\pa^2\widehat F_i}{\pa q_i\pa\rho_i}\cdot
	\na(\rho_i-\widehat\rho_i)\diver(\widehat\rho_i\widehat v_i)
	+ \frac{\pa^2\widehat F_i}{\pa q_i^2}:\big(\na\diver(\widehat\rho_i\widehat v_i)
	\otimes\na(\rho_i-\widehat\rho_i)\big)\bigg)dxds. \label{3.first}
\end{align}
We claim that the second-order derivatives of $F_i$ can be related to
the functional derivative of $S_i$. Indeed, we take the variational derivative
of the weak formulation of \eqref{3.ES},
$$
  \bigg\langle\frac{\delta\E}{\delta\rho_i}(\widehat\brho),
	\diver(\widehat\rho_i\phi_i)\bigg\rangle
  = -\int_{\R^3}\widehat\rho_i\na\frac{\delta\E}{\delta\rho_i}(\widehat\brho)\cdot
	\phi_i dx = -\int_{\R^3}S_i(\widehat\brho_i):\na\phi_i dx
$$
for some test function $\phi_i$. Let $\bm\psi=(\psi_1,\ldots,\psi_n)$ 
be another test function. Then the limit $\tau\to 0$ in
\begin{align*}
  \frac{1}{\tau}\bigg\langle\frac{\delta\E}{\delta\rho_i}
	(\widehat\brho+\tau\bm\psi)	&- \frac{\delta\E}{\delta\rho_i}(\widehat\brho),
	\diver(\widehat\rho_i\phi_i)\bigg\rangle
	+ \frac{1}{\tau}\bigg\langle\frac{\delta\E}{\delta\rho_i}
	(\widehat\brho+\tau\bm\psi),\diver\big((\widehat\rho_i+\tau\psi_i)\phi_i\big)
	-\diver(\widehat\rho_i\phi_i)\bigg\rangle \\
	&= -\frac{1}{\tau}\int_{\R^3}\big(S_i(\widehat\rho_i+\tau\psi_i)
	- S_i(\widehat\rho_i)\big):\na\phi_i dx
\end{align*}
and summation over $i=1,\ldots,n$ leads to
\begin{align*}
  \sum_{i=1}^n &
	\bigg\langle\!\!\!\bigg\langle\frac{\delta^2\E}{\delta\rho_i^2}(\widehat\brho),
	\big(\diver(\widehat\rho_i\phi_i),\psi_i\big)\bigg\rangle\!\!\!\bigg\rangle
	+ \sum_{i=1}^n
	\bigg\langle\frac{\delta\E}{\delta\rho_i}(\widehat\brho),\diver(\psi_i\phi_i)
	\bigg\rangle \\
	&= -\sum_{i=1}^n\int_{\R^3}\bigg\langle\frac{\delta S_i}{\delta\rho_i}(\widehat\brho),
	\psi_i\bigg\rangle:\na\phi_i dx.
\end{align*}
Inserting the expressions for the variational derivatives from Lemma \ref{lem.second}
and choosing $\phi_i=\widehat v_i$ and $\psi_i=\rho_i-\widehat\rho_i$, we deduce that
\begin{align}
  \int_{\R^3}&\sum_{i=1}^n\bigg(\frac{\pa^2\widehat F_i}{\pa\rho_i^2}
	\diver(\widehat\rho_i\widehat v_i)(\rho_i-\widehat\rho_i) 
	+ \frac{\pa^2\widehat F_i}{\pa\rho_i\pa q_i}
	\cdot\na(\diver(\widehat\rho_i\widehat v_i))(\rho_i-\widehat\rho_i) \nonumber \\
  &\phantom{xx}{}
	+ \frac{\pa^2\widehat F_i}{\pa q_i\pa\rho_i}\cdot\na(\rho_i-\widehat\rho_i)
	\diver(\widehat\rho_i\widehat v_i)
	+ \frac{\pa^2\widehat F_i}{\pa q_i^2}:\big(\na(\diver(\widehat\rho_i\widehat v_i))
	\otimes\na(\rho_i-\widehat\rho_i)\big)\bigg)dx \nonumber \\
  &\phantom{xx}{}
	- \int_{\R^3}\sum_{i=1}^n\na\bigg(\frac{\pa\widehat F_i}{\pa\rho_i}
	- \diver\frac{\pa\widehat F_i}{\pa q_i}\bigg)\cdot((\rho_i-\widehat\rho_i)
	\widehat v_i)dx \nonumber \\
	&= \int_{\R^3}\sum_{i=1}^n\bigg\{\bigg(\frac{\pa\widehat s_i}{\pa\rho_i}
	(\rho_i-\widehat\rho_i)
	+ \frac{\pa\widehat s_i}{\pa q_i}\cdot\na(\rho_i-\widehat\rho_i)\bigg)
	\diver\widehat v_i \nonumber \\
  &\phantom{xx}{}
	+ \bigg(\frac{\pa\widehat r_i}{\pa\rho_i}(\rho_i-\widehat\rho_i)
	+ \frac{\pa\widehat r_i}{\pa q_i}\cdot\na(\rho_i-\widehat\rho_i)\bigg)
	\cdot\na\diver\widehat v_i \nonumber \\
	&\phantom{xx}{}+ \bigg(\frac{\pa\widehat H_i}{\pa\rho_i}(\rho_i-\widehat\rho_i)
	+ \frac{\pa\widehat H_i}{\pa q_i}\cdot\na(\rho_i-\widehat\rho_i)\bigg)
	:\na\widehat v_i\bigg\}dx. \label{3.T1}
\end{align}
The first four terms on the left-hand side correspond, up to the sign,
to the right-hand side of \eqref{3.first}. Using
\begin{align*}
	-\int_{\R^3}&\sum_{i=1}^n\na\bigg(\frac{\pa\widehat F_i}{\pa\rho_i}
	- \diver\frac{\pa\widehat F_i}{\pa q_i}\bigg)\cdot((\rho_i-\widehat\rho_i)
	\widehat v_i)dx +\int_{\R^3}\sum_{i=1}^n(\rho_i v_i-\widehat\rho_i\widehat v_i)
	\cdot\na\bigg(\frac{\pa\widehat F_i}{\pa\rho_i}
	- \diver\frac{\pa\widehat F_i}{\pa q_i}\bigg)dx \\
	&= \int_{\R^3}\sum_{i=1}^n \na\bigg(\frac{\pa\widehat F_i}{\pa\rho_i}
	- \diver\frac{\pa\widehat F_i}{\pa q_i}\bigg) \cdot \rho_i(v_i - \widehat v_i) dx,
 \end{align*} 
we find that
\begin{align}
  I_{11} + I_{12} &= \int_0^t\int_{\R^3}\sum_{i=1}^n 
	\na\bigg(\frac{\pa\widehat F_i}{\pa\rho_i}
	- \diver\frac{\pa\widehat F_i}{\pa q_i}\bigg) 
	\cdot \rho_i(v_i - \widehat v_i) dxds \nonumber \\
	&\phantom{xx}{}- \int_0^t\int_{\R^3}\sum_{i=1}^n
	\bigg\{\bigg(\frac{\pa\widehat s_i}{\pa\rho_i}(\rho_i-\widehat\rho_i)
	+ \frac{\pa\widehat s_i}{\pa q_i}\cdot\na(\rho_i-\widehat\rho_i)\bigg)
	\diver\widehat v_i \nonumber \\
  &\phantom{xx}{}
	+ \bigg(\frac{\pa\widehat r_i}{\pa\rho_i}(\rho_i-\widehat\rho_i)
	+ \frac{\pa\widehat r_i}{\pa q_i}\cdot\na(\rho_i-\widehat\rho_i)\bigg)
	\cdot\na\diver\widehat v_i \nonumber \\
	&\phantom{xx}{}+ \bigg(\frac{\pa\widehat H_i}{\pa\rho_i}(\rho_i-\widehat\rho_i)
	+ \frac{\pa\widehat H_i}{\pa q_i}\cdot\na(\rho_i-\widehat\rho_i)\bigg)
	:\na\widehat v_i\bigg\}dxds. \label{3.I1I2}
\end{align}

{\em Step 4: Calculation of $I_{13}$ and $I_2$.}
The sum of $I_{13}$ and $I_2$ is
\begin{align}
	I_{13} + I_2 &= \int_0^t\int_{\R^3}\sum_{i=1}^n
	\bigg(-\frac12\pa_s\big(|\widehat v_i|^2\big)(\rho_i-\widehat\rho_i)
	-\frac12 (\rho_i v_i-\widehat\rho_i\widehat v_i)
	\cdot\na\big(|\widehat v_i|^2) \bigg) dxds \nonumber\\
	&\phantom{xx}{}+ \int_0^t\int_{\R^3}\sum_{i=1}^n
	\big((\rho_i v_i-\widehat\rho_i\widehat v_i)
	\cdot\pa_s\widehat v_i + (\rho_i v_i\otimes v_i - \widehat\rho_i\widehat v_i\otimes
	\widehat v_i):\na\widehat v_i\big) dxds \nonumber  \\
	& = \int_0^t\int_{\R^3}\sum_{i=1}^n\big(- \widehat v_i \otimes (\rho_i v_i 
	- \widehat \rho_i \widehat v_i) + (\rho_iv_i\otimes v_i -\widehat\rho_i 
	\widehat v_i \otimes \widehat v_i)\big):\na\widehat v_i dxds \nonumber \\
	&\phantom{xx}{}+\int_0^t\int_{\R^3}\sum_{i=1}^n\rho_i(v_i-\widehat v_i) 
	\pa_s\widehat v_i dxds. \label{eq:I13I2}
\end{align}

Observing that \eqref{3.rhoveps2} reads in nonconservative form as
$$
  \pa_t\widehat v_i + \widehat v_i\cdot\na\widehat v_i
	= - \na\bigg(\frac{\pa\widehat F_i}{\pa\rho_i}
	- \diver\frac{\pa\widehat F_i}{\pa q_i}\bigg)  
	- \frac{1}{\eps}\sum_{j=1}^n b_{ij}\widehat\rho_j(\widehat v_i-\widehat v_j) 
	+ \frac{\widehat R_i}{\widehat\rho_i},
$$
it follows that
\begin{align}
 \int_0^t&\int_{\R^3}\sum_{i=1}^n
	\rho_i(v_i-\widehat v_i)\cdot\pa_s\widehat v_i dxds \nonumber \\
	&= \int_0^t\int_{\R^3}\sum_{i=1}^n
	\rho_i(v_i-\widehat v_i)\cdot\bigg(-\widehat v_i\cdot\na\widehat v_i
	- \na\bigg(\frac{\pa\widehat F_i}{\pa\rho_i}
	- \diver\frac{\pa\widehat F_i}{\pa q_i}\bigg)\cdot(\rho_i(v_i-\widehat v_i)) 
	\nonumber\\
	&\phantom{xx}{}- \frac{1}{\eps}\sum_{j=1}^n b_{ij}\widehat\rho_j
	(\widehat v_i-\widehat v_j) 
	+ \frac{\widehat R_i}{\widehat\rho_i}\bigg)dxds \nonumber \\
	&= \int_0^t\int_{\R^3}\sum_{i=1}^n\bigg(-\rho_i(v_i-\widehat v_i)\otimes\widehat v_i
	:\na\widehat v_i 
	-\na\bigg(\frac{\pa\widehat F_i}{\pa\rho_i}
	- \diver\frac{\pa\widehat F_i}{\pa q_i}\bigg)  \nonumber\\
	&\phantom{xx}{}- \frac{1}{\eps}\sum_{j=1}^n b_{ij}\rho_i\widehat\rho_j
	(v_i-\widehat v_i)\cdot(\widehat v_i-\widehat v_j) 
	+ \frac{\rho_i}{\widehat\rho_i}(v_i-\widehat v_i)\cdot \widehat R_i\bigg)dxds.
	\label{3.I3}
\end{align}

Substituting the above formula into \eqref{eq:I13I2} leads to
\begin{align}
	I_{13}+I_2 &=\int_0^t\int_{\R^3}\sum_{i=1}^n \rho_i(v_i-\widehat v_i)\otimes
  (v_i-\widehat v_i):\na\widehat v_i dxds \nonumber \\
   &\phantom{xx}{}- \int_0^t\int_{\R^3}\sum_{i=1}^n
	\na\bigg(\frac{\pa\widehat F_i}{\pa\rho_i}
	- \diver\frac{\pa\widehat F_i}{\pa q_i}\bigg) 
	\cdot \rho_i(v_i-\widehat v_i)dxds \nonumber\\
	&\phantom{xx}{}-\frac{1}{\eps}\int_0^t\int_{\R^3}\sum_{i,j=1}^n 
	b_{ij}\rho_i\widehat\rho_j
	(v_i-\widehat v_i)\cdot(\widehat v_i-\widehat v_j) dxds \nonumber \\
	&\phantom{xx}{}+ \int_0^t\int_{\R^3}\sum_{i=1}^n  
	\frac{\rho_i}{\widehat\rho_i}(v_i-\widehat v_i)\cdot \widehat R_idxds. 
	\label{eq:I13I2sum}
\end{align}

{\em Step 5: Calculation of $I_4$.}
We collect the terms in $I_4$ and the friction term in \eqref{eq:I13I2sum}:
\begin{align}
  \frac{1}{\eps}\sum_{i,j=1}^n &\Big(\big(-b_{ij}\rho_i\rho_j(v_i-v_j)
	+ \widehat\rho_i\widehat\rho_j(\widehat v_i-\widehat v_j)\big)\cdot\widehat v_i
	- b_{ij}\rho_i\widehat\rho_j(v_i-\widehat v_i)\cdot(\widehat v_i-\widehat v_j)\Big) 
	\nonumber \\
  &= \frac{1}{\eps}\sum_{i,j=1}^n b_{ij}\rho_i\rho_j(v_i-v_j)\cdot(v_i-\widehat v_i)
  - \frac{1}{\eps}\sum_{i,j=1}^n b_{ij}\rho_i\rho_j(v_i-v_j)\cdot v_i \nonumber \\
	&\phantom{xx}{}
	+ \frac{1}{\eps}\sum_{i,j=1}^n b_{ij}\widehat\rho_i\widehat\rho_j
	(\widehat v_i-\widehat v_j)\cdot\widehat v_i 
	- \frac{1}{\eps}\sum_{i,j=1}^n b_{ij}\rho_i\widehat\rho_j(v_i-\widehat v_i)
	\cdot(\widehat v_i-\widehat v_j). \label{3.auxbij}
\end{align}
By the symmetry of $(b_{ij})$, the second and the third term on the right-hand side 
become
\begin{align*}
  -\frac{1}{\eps}\sum_{i,j=1}^n b_{ij}\rho_i\rho_j(v_i-v_j)\cdot v_i
	&= -\frac{1}{2\eps}\sum_{i,j=1}^n b_{ij}\rho_i\rho_j|v_i-v_j|^2, \\
	\frac{1}{\eps}\sum_{i,j=1}^n b_{ij}\widehat\rho_i\widehat\rho_j
	(\widehat v_i-\widehat v_j)\cdot\widehat v_i
	&= \frac{1}{2\eps}\sum_{i,j=1}^n b_{ij}\widehat\rho_i\widehat\rho_j
	|\widehat v_i-\widehat v_j|^2.
\end{align*}
We write the last term on the right-hand side of \eqref{3.auxbij} as
\begin{align*}
  -\frac{1}{\eps}\sum_{i,j=1}^n b_{ij}\rho_i\widehat\rho_j(v_i-\widehat v_i)\cdot
	(\widehat v_i-\widehat v_j)
	&= \frac{1}{\eps}\sum_{i,j=1}^n b_{ij}\rho_i(\rho_j-\widehat\rho_j)
	(v_i-\widehat v_i)\cdot(\widehat v_i-\widehat v_j) \\
	&\phantom{xx}{}- \frac{1}{\eps}\sum_{i,j=1}^n b_{ij}\rho_i\rho_j
	(v_i-\widehat v_i)\cdot(\widehat v_i-\widehat v_j). 
\end{align*}
The last term can be combined with the first term on the right-hand side
of \eqref{3.auxbij}:
\begin{align*}
  \frac{1}{\eps}&\sum_{i,j=1}^n b_{ij}\rho_i\rho_j(v_i-v_j)\cdot(v_i-\widehat v_i)
	- \frac{1}{\eps}\sum_{i,j=1}^n b_{ij}\rho_i\rho_j(v_i-\widehat v_i)\cdot
	(\widehat v_i-\widehat v_j) \\
	&= \frac{1}{\eps}\sum_{i,j=1}^n b_{ij}\rho_i\rho_j
	\big((v_i-v_j)-(\widehat v_i-\widehat v_j)\big)\cdot(v_i-\widehat v_i) \\
	&= \frac{1}{2\eps}\sum_{i,j=1}^n b_{ij}\rho_i\rho_j
	|(v_i-v_j)-(\widehat v_i-\widehat v_j)|^2.
\end{align*}
Then, combining these results, we conclude from \eqref{3.auxbij} that
\begin{align}
  I_4 &- \frac{1}{\eps}\int_0^t\int_{\R^3}\sum_{i,j=1}^n b_{ij}\rho_i\widehat\rho_j
	(v_i-\widehat v_i)\cdot(\widehat v_i-\widehat v_j)dxds \nonumber \\
  &= -\frac{1}{\eps}\int_0^t\int_{\R^3}
	\sum_{i,j=1}^n \Big(\big(b_{ij}\rho_i\rho_j(v_i-v_j)
	- \widehat\rho_i\widehat\rho_j(\widehat v_i-\widehat v_j)\big)\cdot\widehat v_i 
	\nonumber \\
	&\phantom{xx}{}
	+ b_{ij}\rho_i\widehat\rho_j(v_i-\widehat v_i)\cdot
	(\widehat v_i-\widehat v_j)\Big) dxds \nonumber \\
	&= \frac{1}{2\eps}\int_0^t\int_{\R^3}\sum_{i,j=1}^n b_{ij}\rho_i\rho_j
	|(v_i-v_j)-(\widehat v_i-\widehat v_j)|^2 dxds \nonumber \\
	&\phantom{xx}{}- \frac{1}{2\eps}\int_0^t\int_{\R^3}
	\sum_{i,j=1}^n b_{ij}\rho_i\rho_j|v_i-v_j|^2 
	+ \frac{1}{2\eps}\int_0^t\int_{\R^3}
	\sum_{i,j=1}^n b_{ij}\widehat\rho_i\widehat\rho_j |\widehat v_i-\widehat v_j|^2 dxds
	\nonumber \\
	&\phantom{xx}{}
	+\frac{1}{\eps}\int_0^t\int_{\R^3}\sum_{i,j=1}^n b_{ij}\rho_i(\rho_j-\widehat\rho_j)
	(v_i-\widehat v_i)\cdot(\widehat v_i-\widehat v_j)dxds. \label{3.I7}
\end{align}

Finally, we insert \eqref{3.I1I2}, \eqref{eq:I13I2sum}, and
\eqref{3.I7} into \eqref{3.aux} and then subtract the resulting \eqref{3.aux} 
and equation \eqref{eq:Energy2}
from \eqref{eq:Energy} to arrive at \eqref{3.rei}.
\end{proof}


\subsection{Convergence of the Chapman-Enskog expansion }\label{sec.conv}

We proceed to justify the Chapman-Enskog expansion using the relative entropy identity. 
We place a series of assumptions:
\begin{enumerate}[label=\bf (A\arabic*)]
  \setcounter{enumi}{1}
\item \label{A2} The strong solution $(\widehat \rho_i^\eps, \widehat v^\eps)$ to 
\eqref{3.rhoeps}-\eqref{3.rhoveps} satisfies for $\widehat v_i^\eps 
= \widehat v^\eps + \widehat u_i^\eps$ with $\widehat u_i^\eps$ being
a solution of \eqref{3.u}: 
There exists a constant $C>0$ such that for all $\eps>0$ and $i=1,\ldots,n$,
$$
  \|\nabla \widehat v_i^\eps\|_{L^\infty([0,T];L^\infty(\R^3))}
	+ \|\na\diver \widehat v_i^\eps\|_{L^\infty([0,T];L^\infty(\R^3))} \le C.
$$
\item \label{A3} The strong solution $\widehat\rho_i^\eps$ to 
\eqref{3.rhoeps}-\eqref{3.rhoveps} satisfies: There are constants $K>\kappa>0$ 
such that for all $\eps>0$, $x\in\R^3$, $t\in(0,T)$, and $i=1,\ldots,n$,
$$
  \kappa\le\widehat\rho_i^\eps(x,t)\le K .
$$
\item \label{A4} Let $F_i(\rho_i,q_i)=h_i(\rho_i) + \frac12\kappa_i(\rho_i)|q_i|^2$,
where $h_i$ and $\kappa_i$ are $C^3$ functions and 
there exists a constant $\alpha>0$ such that
for all $i=1,\ldots,n$ and $\rho_i\ge 0$, 
$$
  h_i''(\rho_i)\ge\alpha, \quad 
	\kappa_i(\rho_i){\kappa_i''(\rho_i)} -  {2\kappa_i'(\rho_i)^2} \ge 0, 
	\quad \kappa_i (\rho_i)>0.
$$
\item \label{A5} The dissipative weak solution 
$(\brho^\eps,\bm{v}^\eps)$ satisfies that $\rho_i^\eps$ are uniformly bounded in 
$L^\infty([0,T];L^\infty(\mathbb{R}^3))$ and there are constants $K>\kappa>0$ such that 
$$
  \kappa \le \rho_i^\eps \le K\quad \mbox{in }\R^3,\ 0<t<T.
$$
\end{enumerate}

Hypothesis \ref{A1} concerns the family of dissipative weak solutions
which is assumed to satisfy the uniform bounds \ref{A5}. Hypotheses \ref{A2} and 
\ref{A3} concern the family of strong solutions to
the target system \eqref{3.rhoeps}-\eqref{3.rhoveps}.

Hypothesis \ref{A4} is a structural hypothesis on the model.  
It is in particular satisfied for $\kappa_i(\rho_i)=\rho_i^s$ with
$s \in [-1,0]$ for $\rho_i>0$. The important special cases $s = -1$ 
(corresponding to the quantum hydrodynamic system) and $s=0$ 
(corresponding to constant capillarity) are included.

\begin{theorem}\label{thm.convCE}
Let $(\brho^\eps, \bm{v}^\eps)$ be a dissipative weak
solution to \eqref{3.rho}-\eqref{3.rhov} satisfying assumption \ref{A1} and 
\ref{A5}, and let $(\widehat\brho^\eps,\widehat v^\eps)$
be a strong solution to \eqref{3.rhoeps}-\eqref{3.rhoveps} satisfying
assumptions \ref{A2}-\ref{A4}. Furthermore, let assumption \ref{N} on page \pageref{N}
hold and let $T>0$. We introduce
$$
  \chi(t) = \int_{\R^3}\sum_{i=1}^n\bigg(
	\frac12\rho_i^\eps|v_i^\eps-\widehat v_i^\eps|^2 
	+  (\rho_i^\eps-\widehat\rho_i^\eps)^2 
	+ \frac{1}{2\kappa_i(\rho_i^\eps)}|\kappa_i(\rho_i^\eps)\nabla \rho_i^\eps 
	- \kappa_i(\widehat \rho_i^\eps)\na\widehat \rho_i^\eps|^2\bigg)(t)dx.
$$
Then there exists a constant $C>0$ such that for all $\eps>0$ and $t\in(0,T)$,
$$
  \chi(t) \le C(\chi(0)+\eps^2), \quad t\in(0,T).
$$
In particular, if $\chi(0)\to 0$ as $\eps\to 0$, we have
$$
  \sup_{t\in(0,T)}\chi(t) \to 0\quad\mbox{as }\eps\to 0.
$$
\end{theorem}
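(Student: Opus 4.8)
The plan is to combine the relative energy inequality of Proposition~\ref{prop.rei} with a Gronwall argument, after showing that the relative total energy $\E_{\rm tot}(\brho^\eps,\bm{m}^\eps|\widehat\brho^\eps,\widehat{\bm{m}}^\eps)(t)$ is comparable to $\chi(t)$. Concretely, I would first establish two-sided bounds
$$
 c_1\,\chi(t)\;\le\;\E_{\rm tot}(\brho^\eps,\bm{m}^\eps|\widehat\brho^\eps,\widehat{\bm{m}}^\eps)(t)\;\le\;c_2\,\chi(t),
$$
with $c_1,c_2>0$ independent of $\eps$ and $t$. The kinetic part $\tfrac12\rho_i^\eps|v_i^\eps-\widehat v_i^\eps|^2$ already coincides with the first term of $\chi$, so the point is to bound the relative potential density $F_i(\rho_i^\eps,\na\rho_i^\eps|\widehat\rho_i^\eps,\na\widehat\rho_i^\eps)$ from above and below by $(\rho_i^\eps-\widehat\rho_i^\eps)^2+\tfrac1{2\kappa_i(\rho_i^\eps)}|\kappa_i(\rho_i^\eps)\na\rho_i^\eps-\kappa_i(\widehat\rho_i^\eps)\na\widehat\rho_i^\eps|^2$. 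This is exactly the algebraic computation carried out for the single-component Euler--Korteweg functional in \cite{GLT17}: the splitting $F_i=h_i(\rho_i)+\tfrac12\kappa_i(\rho_i)|\na\rho_i|^2$, the lower bound $h_i''\ge\alpha$ and the convexity of $-1/\kappa_i$ (equivalently $\kappa_i\kappa_i''-2(\kappa_i')^2\ge0$) from \ref{A4}, together with the uniform density bounds \ref{A3} and \ref{A5}, yield both inequalities with constants depending only on $\alpha$, $\kappa$, $K$ and the $C^3$-norms of $h_i,\kappa_i$ on $[\kappa,K]$.

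Next I would estimate the terms on the right-hand side of \eqref{3.rei}. The convective term is immediately $\le\|\na\widehat v_i^\eps\|_{L^\infty}\int_{\R^3}\sum_i\rho_i^\eps|v_i^\eps-\widehat v_i^\eps|^2\,dx\le C\chi(t)$ by \ref{A2}. For the relative-stress terms involving $s_i(\cdot|\cdot)\diver\widehat v_i^\eps$, $r_i(\cdot|\cdot)\cdot\na\diver\widehat v^\eps$ and $H_i(\cdot|\cdot):\na\widehat v_i^\eps$, the relative stresses are second-order Taylor remainders of $s_i,r_i,H_i$; by the identities of \cite{GLT17} — again using only the structure \ref{A4} and the density bounds \ref{A3}, \ref{A5} — they are pointwise controlled by $(\rho_i^\eps-\widehat\rho_i^\eps)^2+\tfrac1{\kappa_i(\rho_i^\eps)}|\kappa_i(\rho_i^\eps)\na\rho_i^\eps-\kappa_i(\widehat\rho_i^\eps)\na\widehat\rho_i^\eps|^2$, i.e.\ by the relative potential density; multiplying by $\|\na\widehat v_i^\eps\|_{L^\infty}+\|\na\diver\widehat v^\eps\|_{L^\infty}\le C$ from \ref{A2} and integrating gives $\le C\chi(t)$. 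Here it is essential that one does not expand in $\na\rho_i^\eps$ alone (which need not be bounded for the weak solution), but keeps the $\kappa_i$-weighted combination $\kappa_i(\rho_i^\eps)\na\rho_i^\eps-\kappa_i(\widehat\rho_i^\eps)\na\widehat\rho_i^\eps$ and exploits that $\na\widehat\rho_i^\eps$ is bounded.

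The genuinely new terms, absent in the mono-species theory, are the remainder term $-\int_0^t\int_{\R^3}\sum_i\tfrac{\rho_i^\eps}{\widehat\rho_i^\eps}\widehat R_i^\eps\cdot(v_i^\eps-\widehat v_i^\eps)\,dx\,ds$ and the singular friction term $-\tfrac1\eps\int_0^t\int_{\R^3}\sum_{i,j}b_{ij}\rho_i^\eps(\rho_j^\eps-\widehat\rho_j^\eps)(v_i^\eps-\widehat v_i^\eps)\cdot(\widehat v_i^\eps-\widehat v_j^\eps)\,dx\,ds$. For both I would exploit that $\widehat u_i^\eps$ solves \eqref{3.u} with right-hand side $\eps\,\widehat d_i^\eps$, so that Lemma~\ref{lem.linsys}, the lower bound \ref{A3} and the regularity of the strong solution (which makes $\widehat d_i^\eps$ and the space-time derivatives entering \eqref{3.Reps} bounded) give $\|\widehat u_i^\eps\|+\|\widehat R_i^\eps\|=O(\eps)$ uniformly in $t$. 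Then $\widehat v_i^\eps-\widehat v_j^\eps=\widehat u_i^\eps-\widehat u_j^\eps=O(\eps)$ cancels the $1/\eps$ in the friction term, leaving $\le C\int_0^t\int_{\R^3}\sum_{i,j}\rho_i^\eps|\rho_j^\eps-\widehat\rho_j^\eps|\,|v_i^\eps-\widehat v_i^\eps|\,dx\,ds\le C\int_0^t\chi(s)\,ds$ by Young's inequality and \ref{A5}; and Young's inequality applied to the remainder term yields $\le\tfrac12\int_0^t\int_{\R^3}\sum_i\rho_i^\eps|v_i^\eps-\widehat v_i^\eps|^2\,dx\,ds+C\int_0^t\int_{\R^3}\sum_i\tfrac{\rho_i^\eps}{(\widehat\rho_i^\eps)^2}|\widehat R_i^\eps|^2\,dx\,ds\le C\int_0^t\chi(s)\,ds+C\eps^2$, using \ref{A1}, \ref{A3}, \ref{A5}. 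The initial term is $\le c_2\chi(0)$ by the two-sided bound above, and the friction dissipation on the left of \eqref{3.rei} is simply dropped. Collecting all estimates and using $c_1\chi(t)\le\E_{\rm tot}(\brho^\eps,\bm{m}^\eps|\widehat\brho^\eps,\widehat{\bm{m}}^\eps)(t)$ gives
$$
 \chi(t)\;\le\;C\chi(0)+C\eps^2+C\int_0^t\chi(s)\,ds,
$$
whence Gronwall's lemma yields $\chi(t)\le C(\chi(0)+\eps^2)$ on $(0,T)$, which also implies the last assertion. I expect the main obstacle to be the two-sided comparison of the relative potential density (and of the relative stresses $s_i,r_i,H_i$) with the $\kappa_i$-weighted quantities in $\chi$ under the sole structural hypothesis \ref{A4}; this is the technical heart, inherited from \cite{GLT17}, and one must check it remains robust under the multicomponent coupling — which it does, since each $F_i$ and $S_i$ depends only on $(\rho_i,\na\rho_i)$. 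A secondary point requiring care is verifying that the strong solution is regular enough that $\widehat R_i^\eps=O(\eps)$ in a norm strong enough to close the estimate on all of $\R^3$.
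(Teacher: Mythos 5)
Your proposal follows essentially the same route as the paper's proof: the relative energy inequality of Proposition~\ref{prop.rei}, the coercivity and boundedness of the relative potential density and relative stresses $s_i,r_i,H_i$ in terms of $(\rho_i^\eps-\widehat\rho_i^\eps)^2$ and the $\kappa_i$-weighted gradient difference (via the convexity of $h_i$ and $-1/\kappa_i$ from \ref{A4} together with the density bounds \ref{A3}, \ref{A5}), the $O(\eps)$ bounds on $\widehat u_i^\eps$ and $\widehat R_i^\eps$ from \eqref{eq:rhou} to neutralize the $1/\eps$ friction term, and Gronwall. The argument is correct and matches the paper step for step, including the key observation that one must keep the combination $\kappa_i(\rho_i^\eps)\na\rho_i^\eps-\kappa_i(\widehat\rho_i^\eps)\na\widehat\rho_i^\eps$ rather than expanding in $\na\rho_i^\eps$ alone.
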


\begin{proof}
We apply the relative energy inequality \eqref{3.rei}. First, we relate
the total relative entropy to $\chi(t)$. 
The superscript $\eps$ is dropped for simplicity of calculations.
The relative potential is 
\begin{align}\label{eq:relativeF}
	F_i(\rho_i,q_i|\widehat \rho_i,\widehat q_i)
	&= F_i(\rho_i,q_i) - F_i(\widehat \rho_i,\widehat q_i)
	- \frac{\pa F_i}{\pa\rho_i}(\widehat \rho_i,\widehat q_i)(\rho_i-\widehat \rho_i)
	- \frac{\pa F_i}{\pa q_i}(\widehat\rho_i,\widehat q_i)\cdot(q_i-\widehat q_i) \\
	& = h_i(\rho_i|\widehat \rho_i) + \bigg(\frac{1}{2} \kappa_i(\rho_i) 
	|q_i|^2\bigg)(\rho_i,q_i|\widehat \rho_i,\widehat q_i). \nonumber
\end{align}
The second term on the right-hand side of the above equation is calculated in 
detail as follows:
\begin{align}
    	\bigg(\frac{1}{2} &\kappa_i(\rho_i) |q_i|^2\bigg) 
			(\rho_i,q_i|\widehat \rho_i,\widehat q_i) \nonumber	\\
    	&= \frac{1}{2} \kappa_i(\rho_i) |q_i|^2 - \frac{1}{2} \kappa_i(\widehat \rho_i) 
			|\widehat q_i|^2 - \frac{1}{2} \kappa_i'(\widehat \rho_i) |\widehat q_i|^2 
			(\rho_i - \widehat\rho_i)
    	 - \kappa_i(\widehat\rho_i) \widehat q_i(q_i-\widehat q_i) \nonumber\\
    	&= \frac{1}{2\kappa_i(\rho_i)} (\kappa_i^2(\rho_i)|q_i|^2
			- 2\kappa_i(\widehat \rho_i) \kappa_i(\rho_i) q_i \cdot \widehat q_i
			+ \kappa_i^2(\widehat \rho_i)|\widehat q_i|^2) \nonumber\\
    	&\phantom{xx}{}+ \frac{1}{2} |\widehat q_i|^2
			\bigg(-\frac{\kappa_i^2(\widehat\rho_i)}{\kappa_i(\rho_i)}
			+ \kappa_i(\widehat \rho_i) - \kappa_i'(\widehat\rho_i)(\rho_i - \widehat \rho_i) 
			\bigg) \nonumber\\
    	&= \frac{1}{2\kappa_i(\rho_i)}|\kappa_i(\rho_i)q_i-\kappa_i(\widehat\rho_i)
			\widehat q_i|^2 + \frac{\kappa_i^2(\widehat \rho_i)|\widehat q_i|^2}{2} 
			\bigg(-\frac{1}{\kappa_i(\rho_i)} + \frac{1}{\kappa_i(\widehat \rho_i)}
			- \frac{\kappa_i'(\widehat\rho_i)}{\kappa_i^2(\widehat\rho_i)} 
			(\rho_i-\widehat\rho_i)\bigg) \nonumber\\
    	&= \frac{1}{2\kappa_i(\rho_i)}|\kappa_i(\rho_i)q_i-\kappa_i(\widehat\rho_i)
			\widehat q_i|^2 + \frac{\kappa_i^2(\widehat \rho_i)|\widehat q_i|^2}{2}
			\bigg(-\frac{1}{\kappa_i}\bigg)(\rho_i|\widehat\rho_i).\label{eq:kq2}
    \end{align}
Assumption \ref{A4} implies that
\begin{align*}
	\bigg(-\frac{1}{\kappa_i}\bigg)(\rho_i|\widehat\rho_i)  
	&= -\frac{1}{\kappa_i(\rho_i)} + \frac{1}{\kappa_i(\widehat \rho_i)}
	-\frac{\kappa_i'(\widehat\rho_i)}{\kappa_i^2(\widehat\rho_i)} 
	|\widehat q_i|^2(\rho_i-\widehat\rho_i) \\
	&= \int_0^1\int_0^\tau -\frac{2(\kappa_i')^2-\kappa_i\kappa_i''}{\kappa_i^3}
	(s\rho_i + (1-s)\widehat\rho_i)) dsd\tau (\rho_i-\widehat\rho_i)^2 \ge 0.
\end{align*}
Due to assumption \ref{A4}, the Taylor expansion of $h_i(\rho_i|\widehat \rho_i)$ gives
\begin{align*}
	h_i(\rho_i|\widehat\rho_i) 
	&= h_i(\rho_i) - h_i(\widehat \rho_i) - h'_i(\widehat \rho_i)(\rho_i 
	- \widehat\rho_i) \\
	&= \int_0^1\int_0^\tau h''_i(s\rho_i + (1-s)\widehat\rho_i) dsd\tau 
	(\rho_i-\widehat\rho_i)^2 \ge C|\rho_i-\widehat\rho_i|^2.
\end{align*}
It follows that, for some $C>0$ independent of $\eps$,
\begin{align*}
  F_i(\rho_i,q_i|\widehat\rho_i,\widehat q_i) 
	\ge C|\rho_i-\widehat\rho_i|^2 + \frac{1}{2\kappa_i(\rho_i)}
	|\kappa_i(\rho_i)q_i-\kappa_i(\widehat\rho_i)\widehat q_i|^2 .
\end{align*}
We deduce that
$$
  \E_{\rm tot}(\brho^\eps,\bm{m}^\eps|\widehat\brho^\eps,\widehat{\bm{m}}^\eps)
	= \int_{\R^3}\sum_{i=1}^n\bigg(F_i(\rho_i^\eps,\na\rho_i^\eps
	|\widehat\rho_i^\eps,\na\widehat\rho_i^\eps)
	+ \frac12\rho_i^\eps|v_i^\eps-\widehat v_i^\eps|^2\bigg)dx \ge C\chi(t).
$$

We turn to the right-hand side of the energy inequality \eqref{3.rei}.
We write $J_1,\ldots,J_4$ for the four integrals on the right-hand side of 
\eqref{3.rei}. Thanks to assumption (A2), 
\begin{align}
  J_1 &= - \int_0^t\int_{\R^3}\sum_{i=1}^n\rho_i^\eps
	(v_i^\eps-\widehat v_i^\eps)\otimes(v_i^\eps-\widehat v_i^\eps):\na\widehat v_i^\eps
	dxds \nonumber\\  &\le C\int_0^t\int_{\R^3}\sum_{i=1}^n\rho_i^\eps|v_i^\eps
	-\widehat v_i^\eps|^2 dxds \le C\int_0^t\chi(s)ds.\label{eq:J1}
\end{align}
To estimate $J_2$, we first calculate the stress tensors using \eqref{3.S} and obtain
\begin{align*}
	s_i(\rho_i,q_i) &= p_i(\rho_i) + \frac{1}{2}(\kappa_i(\rho_i)+\rho_i\kappa_i'(\rho_i))
	|q_i|^2,\, p_i(\rho_i) =\rho_i h'_i(\rho_i)-h_i(\rho_i),  \\
	r_i(\rho_i,q_i) &= \rho_i \kappa_i(\rho_i) q_i, \\
	H_i(\rho_i,q_i) &= \kappa_i(\rho_i) q_i \otimes q_i.
\end{align*}

For $s_i(\rho_i,q_i|\widehat\rho_i,\widehat q_i)$, we first split $s_i(\rho_i,q_i)$ as 
$$
  s_i(\rho_i,q_i) = p_i(\rho_i) + \frac{1}{2}\kappa_i(\rho_i)|q_i|^2  
	+ A_i(\rho_i,q_i),\quad 
	A_i(\rho_i,q_i) = \frac{1}{2} \rho_i \kappa_i'(\rho_i) |q_i|^2.
$$
Due to assumption \ref{A4}, $p_i''$ is a continuous function. Furthermore, thanks 
to assumptions \ref{A3} and \ref{A5}, $s\rho_i + (1-s)\widehat\rho_i$ is bounded for 
$s \in [0,1]$, so $p_i''(s\rho_i + (1-s)\widehat\rho_i))$ is bounded.  
The relative pressure becomes
$$
  p_i(\rho_i|\widehat \rho_i)= \int_0^1\int_0^\tau p_i''(s\rho_i + (1-s)\widehat\rho_i) 
	dsd\tau (\rho_i-\widehat\rho_i)^2 \le C|\rho_i-\widehat\rho_i|^2.
$$
For $A_i(\rho_i,q_i|\widehat\rho_i,\widehat q_i)$, we can replace $\kappa_i(\rho_i)$ 
in the calculations of $(\frac{1}{2} \kappa_i(\rho_i) |q_i|^2) 
(\rho_i,q_i|\widehat \rho_i,\widehat q_i)$ by $\rho_i \kappa_i'(\rho_i)$ to get 
\begin{align}\label{eq:A}
	A_i(\rho_i,q_i|\widehat\rho_i,\widehat q_i)  
	= \frac{1}{2\rho_i \kappa_i'(\rho_i)}
	|\rho_i \kappa_i'(\rho_i) q_i - \widehat\rho_i \kappa_i'(\widehat\rho_i) 
	\widehat q_i|^2 
	+ \frac{|\widehat q_i|^2 \widehat{\rho_i}^2 (\kappa_i'(\widehat \rho_i))^2}{2} 
	\bigg(-\frac{1}{\rho_i \kappa_i'}\bigg)(\rho_i|\widehat\rho_i).
\end{align}
The first term on the right-hand side can be estimated as follows:
\begin{align*}
  &\frac{1}{2\rho_i \kappa_i'(\rho_i)}  |\rho_i \kappa_i'(\rho_i) q_i 
	- \widehat\rho_i \kappa_i'(\widehat\rho_i) \widehat q_i)|^2 \\
  &\phantom{xx}{}=\frac{1}{2\rho_i \kappa_i'(\rho_i)} \Big| \frac{\rho_i 
	\kappa_i'(\rho_i)}{\kappa_i(\rho_i)}(\kappa_i(\rho_i) q_i 
	- \kappa_i(\widehat \rho_i) \widehat q_i) 
	+ \bigg(\frac{\rho_i \kappa_i'(\rho_i) \kappa_i(\widehat \rho_i)}{\kappa_i(\rho_i)} 
	- \widehat\rho_i \kappa_i'(\widehat\rho_i)\bigg)\widehat q_i\Big|^2 \\
  &\phantom{xx}{}\le \frac{\rho_i \kappa_i'(\rho_i)}{2\kappa_i^2(\rho_i)} 
	|\kappa_i(\rho_i) q_i - \kappa_i(\widehat \rho_i) \widehat q_i|^2  
  + \frac{\kappa_i^2(\widehat\rho_i) \widehat q_i^2}{2\rho_i \kappa_i'(\rho_i)} 
	\bigg|\frac{\rho_i\kappa_i'(\rho_i)}{\kappa_i(\rho_i)} 
	- \frac{\widehat \rho_i \kappa_i'(\widehat\rho_i)}{\kappa_i(\widehat\rho_i)} 
	\bigg|^2 \\
  &\phantom{xx}{} \le  \frac{C}{\kappa_i(\rho_i)} |\kappa_i(\rho_i) q_i 
	- \kappa_i(\widehat \rho_i) \widehat q_i|^2 + C |\rho_i - \widehat\rho_i|^2.
\end{align*}
We use assumption \ref{A5} in the first item of the last inequality to obtain an 
upper bound on $\rho_i \kappa_i'(\rho_i)/\kappa_i(\rho_i)$. Assumptions \ref{A3} 
and \ref{A5} are used to estimate the second item.
By the same assumptions, a Taylor expansion of the last term on the right-hand side 
of \eqref{eq:A} leads to 
$$
  \bigg(-\frac{1}{\rho_i \kappa_i'}\bigg)(\rho_i|\widehat\rho_i) 
	\le C |\rho_i - \widehat \rho_i|^2.
$$
We thus have
\begin{align}\label{eq:sest}
  s_i(\rho_i,q_i|\widehat\rho_i,\widehat q_i) 
	\le C|\rho_i-\widehat\rho_i|^2 + \frac{1}{2\kappa_i(\rho_i)}|\kappa_i(\rho_i)q_i
	-\kappa_i(\widehat\rho_i)\widehat q_i|^2 .
\end{align}

Observe that
\begin{align}
	r_i&(\rho_i,q_i|\widehat \rho_i,\widehat q_i)  \nonumber \\
	&= \rho_i \kappa_i(\rho_i) q_i - \widehat \rho_i \kappa_i(\widehat \rho_i) 
	\widehat q_i - (\kappa_i(\widehat\rho_i) + \widehat \rho_i \kappa_i'(\widehat \rho_i))
	\widehat q_i (\rho_i - \widehat \rho_i) - \widehat \rho_i 
	\kappa_i(\widehat \rho_i)(q_i - \widehat q_i) \nonumber\\
	&= (\rho_i \kappa_i(\rho_i) -  \widehat \rho_i \kappa_i(\widehat \rho_i)) q_i 
	- \kappa_i(\widehat\rho_i) \widehat q_i(\rho_i - \widehat \rho_i) 
	- \widehat \rho_i \kappa_i'(\widehat \rho_i) 
	\widehat q_i (\rho_i - \widehat \rho_i) \nonumber\\
	&= \frac{\rho_i \kappa_i(\rho_i) - \widehat\rho_i \kappa_i(\widehat\rho_i)}{
	\kappa_i(\rho_i)}(\kappa_i(\rho_i)q_i - \kappa_i(\widehat\rho_i)\widehat q_i) 
	\nonumber \\
	&\phantom{xx}{} + \widehat \rho_i \kappa_i^2(\widehat\rho_i)
	\widehat q_i (-\frac{1}{\kappa_i(\rho_i)} + \frac{1}{\kappa_i(\widehat\rho_i)} 
	- \frac{\kappa_i'(\widehat\rho_i)}{\kappa_i^2(\widehat\rho_i)}
	(\rho_i-\widehat\rho_i)) \nonumber\\
	&\le \frac{C }{\kappa_i(\rho_i)}|\rho_i \kappa_i(\rho_i) 
	- \widehat \rho_i \kappa_i(\widehat \rho_i)|^2 + \frac{C }{\kappa_i(\rho_i)} 
	|\kappa_i(\rho_i)q_i - \kappa_i(\widehat\rho_i)\widehat q_i|^2 + \widehat \rho_i 
	\kappa_i^2(\widehat\rho_i)\widehat q_i (-\frac{1}{\kappa_i})(\rho_i|\widehat\rho_i)
	\nonumber\\
	&\le\frac{C }{\kappa_i(\rho_i)} |\kappa_i(\rho_i)q_i - \kappa_i(\widehat\rho_i)
	\widehat q_i|^2 +C|\rho_i - \widehat\rho_i|^2,
	\label{eq:rest}
\end{align}
where we used assumptions \ref{A3} and \ref{A5} to show the boundness of 
$(1/\kappa_i)(\rho_i)$ and $(-1/\kappa_i)(\rho_i|\widehat\rho_i)$. They are also 
used to estimate $|\rho_i \kappa_i(\rho_i) - \widehat\rho_i 
\kappa_i(\widehat\rho_i)|^2 \le C|\rho_i-\widehat\rho_i|^2$.

Next, focusing on the term $H_i(\rho_i,q_i)$, 
\begin{align}
	H_i(\rho_i,q_i|\widehat\rho_i,\widehat q_i) &=  \kappa_i(\rho_i) q_i \otimes q_i 
	- \kappa_i(\widehat\rho_i) \widehat q_i \otimes \widehat q_i 
	- \kappa_i'(\widehat\rho_i) \widehat q_i \otimes \widehat q_i (\rho_i 
	- \widehat \rho_i) \nonumber\\
	&\phantom{xx}{} - \kappa_i(\widehat\rho_i)(q_i-\widehat q_i) \otimes \widehat q_i 
	- \kappa_i(\widehat\rho_i)\widehat q_i \otimes (q_i-\widehat q_i) \nonumber\\
	&= \frac{1}{\kappa_i(\rho_i)}(\kappa_i(\rho_i) q_i - \kappa_i(\widehat \rho_i) 
	\widehat q_i) \otimes (\kappa_i(\rho_i) q_i - \kappa_i(\widehat \rho_i) 
	\widehat q_i) \nonumber\\
	&\phantom{xx}{} + \kappa_i^2(\widehat\rho_i)\widehat q_i \otimes \widehat q_i 
	\bigg(-\frac{1}{\kappa_i(\rho_i)} + \frac{1}{\kappa_i(\widehat \rho_i)}
	-\frac{\kappa_i'(\widehat\rho_i)}{\kappa_i^2(\widehat\rho_i)} 
	|\widehat q_i|^2(\rho_i-\widehat\rho_i)\bigg) \nonumber\\
	&\le \frac{1}{\kappa_i(\rho_i)} |\kappa_i(\rho_i) q_i - \kappa_i(\widehat \rho_i) 
	\widehat q_i|^2 + \kappa_i^2(\widehat \rho_i)|\widehat q_i|^2 (-\frac{1}{\kappa_i})
	(\rho_i|\widehat\rho_i)\nonumber \\
	&\le \frac{1}{\kappa_i(\rho_i)} |\kappa_i(\rho_i) q_i - \kappa_i(\widehat \rho_i) 
	\widehat q_i|^2 + C|\rho_i-\widehat \rho_i|^2.\label{eq:Hest}
\end{align} 
Combining \eqref{eq:sest}, \eqref{eq:rest}, and \eqref{eq:Hest} and using 
assumption \ref{A2}, we deduce that
\begin{align}
  J_2 &=- \int_0^t\int_{\R^3}\sum_{i=1}^n\Big(s_i(\rho_i^\eps,\na\rho_i^\eps
	|\widehat\rho_i^\eps,\na\widehat\rho_i^\eps)\diver\widehat v_i^\eps
	+ r_i(\rho_i^\eps,\na\rho_i^\eps|\widehat\rho_i^\eps,\na\widehat\rho_i^\eps)
	\cdot\na\diver\widehat v_i^\eps \nonumber \\
	&\phantom{xx}{}+H_i(\rho_i^\eps,\na\rho_i^\eps
	|\widehat\rho_i^\eps,\na\widehat\rho_i^\eps):\na\widehat v_i^\eps\Big) dxds \nonumber\\
	& \le C\int_0^t\int_{\R^3}\sum_{i=1}^n\left((\rho_i^\eps-\widehat\rho_i^\eps)^2
  +\frac{1}{\kappa_i(\rho_i^\eps)} |\kappa_i(\rho_i^\eps)\nabla\rho_i^\eps 
	- \kappa_i(\widehat\rho_i^\eps) \nabla\widehat\rho_i^\eps|^2\right)dxds	\nonumber\\
  &\le C\int_0^t\chi(s)ds. \label{eq:J2}
\end{align}

From equation \eqref{eq:rhou} we have
$$
  \widehat\rho_i^\eps\widehat u_i^\eps = -\eps\sum_{j=1}^n D_{ij}(\widehat\brho^\eps)
	\na\frac{\delta\E}{\delta\rho_j}(\widehat\brho^\eps).
$$
Hence, by definition \eqref{3.Reps} and upon using assumptions 
\ref{A3} and \ref{A1}, we see 
that $\widehat R_i^\eps$ is of order $O(\eps)$ and that
\begin{align*}
  J_3 &= -\int_0^t\int_{\R^3}\sum_{i=1}^n\frac{\rho_i^\eps}{\widehat\rho_i^\eps}
	\widehat R_i^\eps\cdot(v_i^\eps-\widehat v_i^\eps)dxds \nonumber\\
	&\le C\int_0^t\int_{\R^3}\sum_{i=1}^n\rho_i^\eps|v_i^\eps-\widehat v_i^\eps|^2 dxds
	+ C\int_0^t\int_{\R^3}\sum_{i=1}^n\rho_i^\eps\bigg(
	\frac{\widehat R_i^\eps}{\widehat\rho_i^\eps}\bigg)^2 dxds \\
  &\le C\int_0^t\int_{\R^3}\sum_{i=1}^n\rho_i^\eps|v_i^\eps-\widehat v_i^\eps|^2 dxds
	+ C\eps^2 t.
\end{align*}
Also $\widehat v_i^\eps-\widehat v_j^\eps=\widehat u_i^\eps-\widehat u_j^\eps$
is of order $\eps$, so the last term $J_4$ is estimated using assumption \ref{A5} by
\begin{align*}
  J_4 &=- \frac{1}{\eps}\int_0^t\int_{\R^3}\sum_{i,j=1}^n b_{ij}\rho_i^\eps
	(\rho_j^\eps-\widehat\rho_j^\eps)(v_i^\eps-\widehat v_i^\eps)
	\cdot(\widehat v_i^\eps-\widehat v_j^\eps)dxds\\
	 &\le C\int_0^t\int_{\R^3}\sum_{i=1}^n b_{ij}\rho_i^\eps
	|\rho_j^\eps-\widehat\rho_j^\eps||v_i^\eps-\widehat v_i^\eps|dxds \\
	&\le C\int_0^t\int_{\R^3}\sum_{i=1}^n\rho_i^\eps|v_i^\eps-\widehat v_i^\eps|^2 dxds
	+ C\int_0^t\int_{\R^3}\sum_{i,j=1}^n\rho_i^\eps|\rho_j^\eps-\widehat\rho_j^\eps|^2
	dxds \\
  &\le \int_0^t\chi(s)ds.
\end{align*}

Putting these estimates together, we arrive at
\begin{align*}
  \chi(t) &+ \frac{1}{2\eps}\int_0^t\int_{\R^3}\sum_{i,j=1}^n b_{ij}\rho_i^\eps
	\rho_j^\eps\big|(v_i^\eps-v_j^\eps)-(\widehat v_i^\eps-\widehat v_j^\eps)\big|^2
	dxds \\
	&\le C\chi(0) + C\int_0^t\chi(s)ds + C\eps^2 t.
\end{align*}
Then Gronwall's inequality gives
$\chi(t) \le C(\chi(0) + \eps^2)e ^{CT}$, finishing the proof.
\end{proof}

\begin{remark} \rm
The assumption $h''(\rho_i) \ge \alpha$ is not needed if we assume that
$\kappa_i(\rho_i){\kappa_i''(\rho_i)} -  {2\kappa_i'(\rho_i)^2} \ge \alpha$ and 
$|\nabla \widehat \rho_i|$ is bounded away from zero for any $i=1,\ldots,n$, 
because the second term on the right-hand side of \eqref{eq:kq2} controls 
$|\rho_i-\widehat \rho_i|^2$.
 
The case of quantum hydrodynamics, $\kappa_i(\rho_i) = k_i/(4\rho_i)$ is 
included in the above proof. Indeed, $\chi(t)$ is taken to be 
$$
  \chi(t) = \int_{\R^3}\sum_{i=1}^n\bigg(
	\frac12\rho_i^\eps|v_i^\eps-\widehat v_i^\eps|^2 
	+  (\rho_i^\eps-\widehat\rho_i^\eps)^2 + \frac{2\rho_i^\eps}{k_i}
	\bigg|\frac{\nabla \rho_i^\eps}{\rho_i^\eps} 
	- \frac{\nabla \widehat \rho_i^\eps}{\widehat\rho_i^\eps}\bigg|^2	\bigg)(t)dx.
$$
The condition in assumption \ref{A4} becomes
$$
\kappa_i(\rho_i){\kappa_i''(\rho_i)} -  {2\kappa_i'(\rho_i)^2} = 0,
$$
but one needs the assumption $h''_i(\rho_i)\ge \alpha$ to derive the bounds for 
$|\rho_i^\eps-\widehat\rho_i^\eps|^2$. The use of the nonlinear quadratic term 
$(2\rho_i^\eps/k_i)|\nabla \rho_i^\eps/\rho_i^\eps 
- \nabla \widehat \rho_i^\eps/\widehat\rho_i^\eps|^2$ is crucial to obtain the estimate.

Finally, for the case of constant capillarity, $\kappa_i(\rho_i)=k_i$, 
we conclude that $\kappa_i(\rho_i){\kappa_i''(\rho_i)} -  {2\kappa_i'(\rho_i)^2}=0$,
such that assumption \ref{A4} is satisfied. Thus, Theorem \ref{thm.convCE} also holds
in this case.
\end{remark}


\section{Justification of the high-friction limit}\label{sec.relax}

We recall the original system \eqref{3.rho}-\eqref{3.rhov}:
\begin{align}
  \pa_t\rho_i^\eps + \diver(\rho_i^\eps v_i^\eps) &= 0, 
	\label{4.rho} \\
	\pa_t(\rho_i^\eps v_i^\eps) + \diver(\rho_i^\eps v_i^\eps\otimes v_i^\eps)
	&= \diver S_i(\brho)
	- \frac{1}{\eps}\sum_{j=1}^n b_{ij}\rho_i^\eps\rho_j^\eps
	(v_i^\eps-v_j^\eps), \label{4.rhov}
\end{align}
where $\diver S_i = -\rho_i\na(\delta\E/\delta\rho_i)$.
The limiting system for $\eps\to 0$ becomes
\begin{align}
  \pa_t\bar\rho_i + \diver(\bar\rho_i\bar v) &= 0, \label{4.bar1} \\
  \pa_t(\bar\rho\bar v) + \diver(\bar\rho\bar v\otimes\bar v)
	&= \diver\bar S, \label{4.bar2}
\end{align}
where $\bar S=\sum_{i=1}^n S_i(\bar\rho_i)$, $\bar\rho=\sum_{i=1}^n\bar\rho_i$,
and $\bar\rho\bar v=\sum_{i=1}^n\bar\rho_i\bar v_i$. Indeed, system
\eqref{4.bar1}-\eqref{4.bar2} corresponds to the zeroth-order Chapman-Enskog
expansion \eqref{2.I01}-\eqref{2.I02}. In this section, we verify the
limit $\eps\to 0$ rigorously, analyzing the isentropic case
$F_i(\rho_i,q_i) = h_i(\rho_i)$ and the Korteweg case 
$F_i(\rho_i,q_i) = h_i(\rho_i) + \frac12\kappa_i(\rho_i)|q_i|^2$ separately.


\subsection{High-friction limit in the isentropic case}\label{sec.isen}

We consider the case when the energy density only depends on
the particle density (and not on its gradients), 
$$
  \E(\brho) = \int_{\R^3}\sum_{i=1}^n F_i(\rho_i)dx, \quad F_i=h_i(\rho_i).
$$
We prove the relaxation limit $\eps\to 0$ in \eqref{4.rho}-\eqref{4.rhov}
by applying the general result of \cite{Tza05}. Noting that
$\rho_i\na(\delta\E/\delta\rho_i) = \na p_i(\rho_i)$, where
$$
  p_i(\rho_i) = \rho_i h_i'(\rho_i) - h_i(\rho_i)
$$
is the partial pressure, we can formulate \eqref{4.rho}-\eqref{4.rhov} as
the system of balance laws
\begin{equation}\label{4.Ueps}
  \pa_t U^\eps + \diver F(U^\eps) = \frac{1}{\eps}R(U^\eps),
\end{equation}
where $U^\eps=(\brho^\eps,\bm{m}^\eps)$, 
$\bm{m}^\eps=(\rho_i^\eps v_i^\eps)_{i=1,\ldots,n}$,
\begin{align*}
  F(U^\eps) &= \begin{pmatrix} 
	\rho_i^\eps v_i^\eps \\
	\rho_i^\eps v_i^\eps\otimes v_i^\eps + p_i(\rho_i^\eps)
	\end{pmatrix}_{i=1,\ldots,n}\in\R^{2n}, \\
	R(U^\eps) &= \begin{pmatrix}
	0 \\ -\sum_{j=1}^n b_{ij}\rho_i^\eps\rho_j^\eps(v_i^\eps-v_j^\eps)
	\end{pmatrix}_{i=1,\ldots,n}\in\R^{2n}.
\end{align*}
The (formal) relaxation limit $\eps\to 0$ leads to $R(U)=0$, where 
$U=\lim_{\eps\to 0}U^\eps$. This implies that all limit velocities are the same,
$v:=v_i$ for $i=1,\ldots,n$. Thus, the limit equations are expected to be
\begin{equation*}
  \pa_t\rho_i + \diver(\rho_i v) = 0, \quad
	\pa_t(\rho v) + \diver(\rho v\otimes v) + \na p = 0,
\end{equation*}
for $i=1,\ldots,n$,
where $\rho=\sum_{i=1}^n\rho_i$ and $p=\sum_{i=1}^n p_i$. This system
can be written as the conservation law
\begin{align}\label{4.lim1}
    \pa_t u + \diver f(u) = 0,
\end{align}
where $u=(\brho,m)$, $m=\rho v$, 
and $f(u)=(\rho_1 v,\ldots,\rho_n v,\rho v\otimes v + p)$. 
System \eqref{4.Ueps} has an entropy 
$$
  \eta(U) = \sum_{i=1}^n\bigg(h_i(\rho_i) + \frac12\rho_i |v_i|^2\bigg),
$$
satisfying $\partial_t \int_{\mathbb{R}^3} \eta(U) dx \le 0$.
We introduce the relative entropy density
$$
  \eta(U^\eps|\bar U) = \sum_{i=1}^n\bigg(h_i(\rho_i^\eps|\bar\rho_i)
	+ \frac12\rho_i^\eps|v_i^\eps-\bar v|^2\bigg),
$$
where $h_i(\rho_i^\eps|\bar\rho_i)=h_i(\rho_i^\eps)-h_i(\bar\rho_i)
- h_i'(\bar\rho_i)(\rho_i^\eps-\bar\rho_i)$ and $\bar{U} 
= (\bar \rho_1, \ldots,\bar \rho_n, \bar \rho_1 \bar v, \ldots,\bar \rho_n \bar v)$.

\begin{theorem}[Relaxation limit in the isentropic case]\label{thm.isen}
Assume that \ref{N} on page \pageref{N} holds and that the function
$h_i:[0,\infty)\to\R$ is uniformly convex on $(0,\infty)$ for all $i=1,\ldots,n$.
Let $U^\eps=(\brho^\eps,\bm{v}^\eps)$ 
be a smooth solution to \eqref{4.rho}-\eqref{4.rhov} or \eqref{4.Ueps}
and let $\bar u=(\bar{\brho},\bar\rho\bar v)$ be a smooth 
solution to \eqref{4.bar1}-\eqref{4.bar2} or \eqref{4.lim1}. 
We suppose that there exists $\kappa>0$ such that
$\rho_i^\eps,\bar\rho_i\ge\kappa>0$ in $\R^3\times(0,T)$ for all $i=1,\ldots,n$. Then
for any $r>0$, there exist $s>0$ and $C>0$ independent of $\eps$ such that for 
all $t\in(0,T)$,
$$
  \int_{\{|x|<r\}}\eta(U^\eps|\bar U)(x,t)dx
	\le C\bigg(\int_{\{|x|<r+st\}}\eta(U^\eps|\bar U)(x,0)dx + \eps\bigg).
$$
In particular, if 
$$
  \lim_{\eps\to 0}\int_{\{|x|<r+st\}}\eta(U^\eps|\bar U)(x,0)dx = 0
$$
then
$$
	\lim_{\eps\to 0}\sup_{0<t<T}\int_{\R^3}\sum_{i=1}^n\big((\rho_i^\eps-\bar\rho_i)^2
	+ |v_i^\eps-\bar v_i|^2\big)dx = 0.
$$
\end{theorem}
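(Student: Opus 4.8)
The plan is to verify that the balance-law system \eqref{4.Ueps} fits the abstract hyperbolic-relaxation framework of \cite{Tza05} and to invoke the relative-entropy stability estimate proved there. Write $U=(\brho,\bm m)$ with $m_i=\rho_iv_i\in\R^3$, and let the equilibrium variables be $u=(\brho,m)$ with $m=\sum_{i=1}^n m_i$. The equilibrium set is $\mathcal M=\{U:R(U)=0\}$, and hypothesis \ref{N} guarantees that $R(U)=0$ if and only if $v_i=v_j$ for all $i,j$, i.e.\ $U$ lies on the graph $U=(\brho,\rho_1v,\dots,\rho_nv)$ with $v=m/\rho$, $\rho=\sum_i\rho_i$. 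Thus $\mathcal M$ is a smooth graph over the equilibrium variables, which is the well-preparedness assumed in \cite{Tza05}, and $\bar U$ in the statement is precisely the lift of $\bar u$ to $\mathcal M$.

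The structural hypotheses to check are the following. First, convexity: $\eta(U)=\sum_i\big(h_i(\rho_i)+|m_i|^2/(2\rho_i)\big)$ is jointly convex on $\{\rho_i>0\}$ because $|m_i|^2/(2\rho_i)$ is the perspective of a convex function and $h_i$ is convex by assumption; hence the relaxation system is symmetrizable, with an associated entropy flux. Second, there is a constant linear projection $P$ onto the conserved fields $\big(\brho,\sum_i m_i\big)$ with $PR(U)=0$: the $\brho$-block of $R$ vanishes identically, and $\sum_{i=1}^n\big(-\sum_{j=1}^n b_{ij}\rho_i\rho_j(v_i-v_j)\big)=0$ by the symmetry of $(b_{ij})$. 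Third, consistency with \eqref{4.lim1}: on $\mathcal M$ one has $PF=f$, since $\rho_iv_i=\rho_iv$ and $\sum_i(\rho_iv_i\otimes v_i+p_i)=\rho v\otimes v+p$ there, and $\eta$ restricted to $\mathcal M$, namely $\sum_ih_i(\rho_i)+|m|^2/(2\rho)$, is the (convex) entropy of the limiting multicomponent Euler system, so the target is itself symmetrizable hyperbolic. Fourth, the dissipation inequality: using $\partial\eta/\partial m_i=v_i$,
$$
  D\eta(U)\cdot R(U)=-\sum_{i,j=1}^n b_{ij}\rho_i\rho_j\,v_i\cdot(v_i-v_j)
  =-\frac12\sum_{i,j=1}^n b_{ij}\rho_i\rho_j|v_i-v_j|^2\le 0,
$$
which vanishes exactly when $v_i=v_j$ whenever $b_{ij}>0$; by \ref{N} this forces $U\in\mathcal M$, so the dissipation is nondegenerate transversally to $\mathcal M$ (and, using $\rho_i^\eps\ge\kappa$, it controls $\sum_i|v_i^\eps-\bar v|^2$).

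Granting these, the relative-entropy theorem of \cite{Tza05}, applied to the smooth solution $U^\eps$ of \eqref{4.Ueps} and the smooth solution $\bar u$ of \eqref{4.lim1}, gives the weighted-ball estimate
$$
  \int_{\{|x|<r\}}\eta(U^\eps|\bar U)(x,t)\,dx\le C\bigg(\int_{\{|x|<r+st\}}\eta(U^\eps|\bar U)(x,0)\,dx+\eps\bigg),
$$
with propagation speed $s$ coming from finite speed of propagation for the two hyperbolic systems and $C$ from Gronwall's lemma with constant $\|\na\bar u\|_{L^\infty}$. The $O(\eps)$ loss arises because the lift $\bar U$ satisfies \eqref{4.Ueps} only up to the first Chapman--Enskog residual (the term $\eps d_i^0$ from \eqref{2.linsys2}); this residual enters the relative-entropy balance and is absorbed, via Young's inequality, against the $-\tfrac1\eps$ term furnished by the dissipation inequality, leaving a net $O(\eps)$. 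Finally, uniform convexity of $h_i$ together with $\rho_i^\eps,\bar\rho_i\ge\kappa$ yield $\eta(U^\eps|\bar U)\ge c\sum_i\big((\rho_i^\eps-\bar\rho_i)^2+|v_i^\eps-\bar v|^2\big)$; letting $r\to\infty$ (the relative entropy being globally integrable) and invoking the assumed vanishing of the initial relative entropy gives the stated convergence. The main obstacle is the second step, and specifically the nondegeneracy of the relaxation: the friction matrix $(b_{ij})$ may have vanishing entries, so $D\eta\cdot R$ is only degenerately negative, and one must use hypothesis \ref{N} — which excludes extraneous conservation laws — to conclude that $\ker R$ is exactly the physically expected manifold $\mathcal M$ and that $P$ captures precisely mass and total momentum, as \cite{Tza05} requires; checking that $\eta|_{\mathcal M}$ is a genuine convex entropy of the limit Euler system is the other point needing care.
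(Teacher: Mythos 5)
Your overall route is the same as the paper's: both verify the structural hypotheses of \cite{Tza05} for the system \eqref{4.Ueps} and then invoke the relative-entropy stability theorem there. The routine items (projection $\mathbb{P}$, equilibrium manifold $M(u)$, convexity of $\eta$, entropy pairs for both systems, the sign of $\eta_U\cdot R$, and the mechanism by which the $O(1)$ residual of the lifted solution is absorbed by Young's inequality against the $\eps^{-1}$ dissipation) are all handled correctly, modulo the small slip that the momentum residual of the lift is $d_i^0=O(1)$, not $\eps d_i^0$.

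However, there is a genuine gap at the one nontrivial hypothesis, which you correctly identify as the main obstacle but then only assert: the quantitative coercivity (h7), i.e.\ the paper's inequality \eqref{4.h7},
$$
  \frac12\sum_{i,j=1}^n b_{ij}\rho_i\rho_j|v_i-v_j|^2 \ \ge\ \nu\sum_{i=1}^n\rho_i^2|v_i-v|^2 .
$$
Knowing from \ref{N} that the kernel of the homogeneous system is exactly $\operatorname{span}\{\mathbf{1}\}$ tells you where the dissipation vanishes, but it does not by itself produce a uniform constant $\nu$: the quadratic form $u\mapsto\sum_{i,j}\tau_{ij}u_i\cdot u_j$ with $\tau_{ij}=\delta_{ij}\sum_k b_{ik}\rho_i\rho_k-b_{ij}\rho_i\rho_j$ is only positive \emph{semi}definite on $\R^{3n}$, and some $b_{ij}$ may vanish. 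The missing step is the reduction to the constraint manifold $\sum_i\rho_iu_i=0$: using $\sum_j\tau_{ij}=0$ and the identity \eqref{eq:taucal}, the form is rewritten as $W^\top Q^\top\tau QW$ with $W=(\rho_1u_1,\dots,\rho_{n-1}u_{n-1})^\top$, where $\tau\in\R^{(n-1)\times(n-1)}$ is the invertible (hence positive definite) submatrix from Lemma \ref{lem.linsys} and $Q$ is the explicit invertible matrix $Q_{ij}=\delta_{ij}/\rho_i+1/\rho_n$; positivity of $Q^\top\tau Q$ then yields $\nu$, with uniformity coming from the bounds $\kappa\le\rho_i^\eps$. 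This linear-algebra argument (adapted from \cite{YYZ15}) is the core of the paper's proof of Theorem \ref{thm.isen}, and without it the application of \cite{Tza05} is not justified.
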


\begin{proof}
As mentioned above, the result follows after applying Theorem 3.1 in \cite{Tza05}.
To this end, we need to verify the structural conditions (h1)-(h7) of \cite{Tza05}.

\begin{labeling}{h111}
\item[(h1)] There exists a projection matrix $\mathbb{P}:\R^{2n}\to\R^{n+1}$
satisfying $\operatorname{rank}(\mathbb{P})=n+1$ and $\mathbb{P}(R(U))=0$
for all $U\in\R^{2n}$. This matrix relates the variables $u$ and $U$ and is given by
$$
  u = \mathbb{P}U, \quad 
	\mathbb{P} = \begin{pmatrix} 
	\mathbb{I}_n & \mathbb{O}_n \\
	0,\ldots,0  & 1,\ldots,1 \end{pmatrix},
$$
where $\mathbb{I}_n$ is the unit matrix of $\R^{n\times n}$, $\mathbb{O}_n$
is the zero matrix in $\R^{n\times n}$- It holds for all $U=(\brho,\bm{m})$,
$(\mathbb{P}R(U))_i=0$ for $i=1,\ldots,n$ and
$$
  (\mathbb{P}R(U))_{n+1} = -\sum_{j,k=1}^n b_{jk}\rho_j\rho_k(v_j-v_k) = 0.
$$

\item[(h2)] The equilibrium solutions to $R(U)=0$, called $M(u)$, satisfies
$\mathbb{P}M(u)=u$. The equilibrium solutions are given by 
$M(u)=(\rho_1,\ldots,\rho_n,\rho_1 v,\ldots,\rho_n v)$, 
since $(\mathbb{P}M(u))_{i} = \rho_i$ for $i=1,\ldots,n$ and 
$(\mathbb{P}M(u))_{n+1}$ $=\sum_{j=1}^n\rho_j v=\rho v$.

\item[(h3)] The nondegeneracy conditions
$$
  \dim\operatorname{ker}(R_U(M(u))) = n+1, \quad
	\dim\operatorname{ran}(R_U(M(u))) = n-1
$$
hold, where $R_U=dR/dU$. This can be verified by a straightforward computation.

\item[(h4), (h5)] There exists an entropy density $\eta:\R^{2n}\to\R$ which
is convex and satisfies $\eta_U F_U = J_U$ and $\eta_U\cdot R(U)\le 0$,
where $J$ is the flux vector. We choose
$$
  \eta(U) = \sum_{i=1}^n\bigg(h_i(\rho_i) + \frac12\rho_i |v_i|^2\bigg), \quad
	J(U) = \sum_{i=1}^n\bigg(\rho_i h_i'(\rho_i)v_i + \frac12\rho_i|v_i|^2 v_i\bigg).
$$
Then the inequality is a consequence of the energy inequality \eqref{eq:Energy}.

\item[(h6)] The solution $u$ to \eqref{4.lim1} has the entropy-flux pair
$$
  \eta(M(u)) = \sum_{i=1}^n h_i(\rho_i) + \frac12\rho|v|^2, \quad
	J(M(u)) = \sum_{i=1}^n \rho_i h_i'(\rho_i)v + \frac12\rho|v|^2 v.
$$
This follows from \eqref{eq:Energy2} with $\widehat\rho_i,\widehat v_i$ replaced by $\bar\rho_i,\bar v$. 

\item[(h7)] 
The following inequality holds:
$$
  -\big(\eta_U(U) - \eta_U(M(u)\big)\cdot\big(R(U)-R(M(u))\big) \ge \nu|U-M(u)|^2.
$$
\end{labeling}

The inequality in (h7) amounts to proving
\begin{equation}\label{4.h7}
	\frac{1}{2}\sum_{i,j=1}^n b_{ij} \rho_i \rho_j |v_i-v_j|^2 
	\ge \nu \sum_{i=1}^n \rho_i^2|v_i-v|^2 \, .
\end{equation}
 The proof of this statement is motivated by the analysis in \cite{YYZ15}.
First, note that
$\pa\eta/\pa\rho_i=h_i'(\rho_i)-\frac12|v_i|^2$ and $\pa\eta/\pa m_i=v_i$,
where $m_i=\rho_iv_i$. Taking into account that $R(M(u))=0$, we have
\begin{align*}
  -\big(&\eta_U(U) - \eta_U(M(u))\big)\cdot\big(R(U)-R(M(u))\big) \\
  &= \sum_{i=1}^n(v_i-v)\cdot\sum_{j=1}^n b_{ij}\rho_i\rho_j(v_i-v_j) 
	=	\frac{1}{2} \sum_{i,j} b_{ij} \rho_i \rho_j |v_i-v_j|^2.
\end{align*}  

For the proof of \eqref{4.h7},
let $v_i=v+u_i$, and we reformulate the left-hand side of the inequality in (h7) as
\begin{align*}
  -\big(&\eta_U(U) - \eta_U(M(u))\big)\cdot\big(R(U)-R(M(u))\big)\\
	&= \sum_{i,j=1}^n b_{ij}\rho_i\rho_j(u_i-u_j)\cdot u_i 
	= \sum_{i,j=1}^n \tau_{ij}u_i\cdot u_j,
\end{align*}
where $\tau_{ij}=\delta_{ij}\sum_{k=1}^n b_{ik}\rho_i\rho_k - b_{ij}\rho_i\rho_j$ 
as in \eqref{eq:taudef}.
Since $(\tau_{ij})$ is not positive definite, inequality \eqref{4.h7} does
not follow directly. The idea is to use the fact that there exists a submatrix
$(\tau_{ij})\in\R^{(n-1)\times(n-1)}$ that is positive definite;
see the proof of Lemma \ref{lem.Estar}. Recalling the properties
$Q_{ij}=\delta_{ij}/\rho_i + 1/\rho_n$, $\sum_{i=1}^n\rho_iu_i=0$, 
$\sum_{i=1}^n\tau_{ij}=0$, and \eqref{eq:taucal}, we compute
\begin{align*}
  -\big(&\eta_U(U) - \eta_U(M(u))\big)\cdot\big(R(U)-R(M(u))\big) 
	= \sum_{i=1}^n u_i\sum_{j,k=1}^{n-1}\tau_{ij}Q_{jk}\rho_k u_k \\
	&= \sum_{i=1}^{n-1} u_i\sum_{j,k=1}^{n-1}	\tau_{ij}Q_{jk}\rho_k u_k 
	+ u_n\sum_{j,k=1}^{n-1}\tau_{nj}Q_{jk}\rho_k u_k \\
	&= \sum_{i=1}^{n-1} u_i\sum_{j,k=1}^{n-1}	\tau_{ij}Q_{jk}\rho_k u_k
	- \sum_{\ell=1}^{n-1}\frac{\rho_\ell u_\ell}{\rho_n}\sum_{j,k=1}^{n-1}
  \bigg(-\sum_{m=1}^{n-1}\tau_{mj}\bigg)Q_{jk}\rho_k u_k \\
	&= \sum_{i,j,k,\ell=1}^{n-1}\rho_\ell u_\ell\bigg(\frac{\delta_{i\ell}}{\rho_\ell}
	+ \frac{1}{\rho_n}\bigg)\tau_{ij}Q_{jk}\rho_k u_k \\
	&= \sum_{i,j,k,\ell=1}^{n-1}\rho_\ell u_\ell Q_{i\ell}\tau_{ij}Q_{jk}(\rho_k u_k)
	= W^TQ^\top\tau QW,
\end{align*}
where $W=(\rho_1u_1,\ldots,\rho_{n-1}u_{n-1})^\top$. 
Since $(\tau_{ij})\in\R^{(n-1)\times(n-1)}$ is
positive definite and $Q$ is invertible, $Q^\top\tau Q$ is also
positive definite. We infer that there exists a constant $\mu>0$ such that
$$
  -\big(\eta_U(U) - \eta_U(M(u))\big)\cdot\big(R(U)-R(M(u))\big)
	\ge \mu|W|^2 =  \mu\sum_{i=1}^{n-1}|\rho_i u_i|^2.
$$
We claim that we may sum from $i=1$ to $n$ using another constant.
Indeed, we infer from
$$
  |\rho_n u_n|^2 = \bigg|-\sum_{i=1}^{n-1}\rho_iu_i\bigg|^2
	\le (n-1)\sum_{i=1}^{n-1}|\rho_iu_i|^2
$$
that
$$
  \sum_{i=1}^n|\rho_iu_i|^2 
	= \sum_{i=1}^{n-1}|\rho_iu_i|^2 + |\rho_nu_n|^2
	\le n\sum_{i=1}^{n-1}|\rho_iu_i|^2
$$
and therefore,
$$
  -\big(\eta_U(U) - \eta_U(M(u))\big)\cdot\big(R(U)-R(M(u))\big)
	\ge \frac{\mu}{n}\sum_{i=1}^n|\rho_iu_i|^2,
$$
and the result follows with $\nu=\mu/n$.
\end{proof}


\subsection{High-friction limit in the Euler-Korteweg case}\label{sec.EK}

We next justify the relaxation limit $\eps\to 0$ for energies $F_i$
depending on the particle density and its gradient. We place the assumption:

\begin{enumerate}[label=\bf (A\arabic*)]
  \setcounter{enumi}{5}
	\item \label{A6} $\bar{u} = (\bar{\brho},\bar{\rho} \bar{v})$ is a smooth 
	solution to \eqref{4.bar1}-\eqref{4.bar2} satisfying
	$\bar{u}$, $\partial_t \bar{u}$, $\nabla \bar{u}$,  $D^2 \bar{u}$, $D^3 
	\bar{\rho} \in L^\infty([0,T];L^\infty(\mathbb{R}^3))$.
\end{enumerate} 

\begin{proposition}[Relative energy inequality]\label{prop.rei2}
Let $(\brho^\eps,\bm{v}^\eps)$ be a dissipative weak solution to
\eqref{4.rho}-\eqref{4.rhov} satisfying assumption \ref{A1} on page \pageref{A1} 
and let $(\bar{\brho},\bar v)$ be a smooth solution to  
\eqref{4.bar1}-\eqref{4.bar2} satisfying assumption \ref{A6}. 
Let assumption \ref{N} on page \pageref{N} hold. Then
\begin{align}
  \E_{\rm tot}&(\brho,\bm{m}|\bar\brho,\bar{\bm{m}})(t)
	+ \frac{1}{2\eps}\int_0^t\int_{\R^3}\sum_{i,j=1}^n b_{ij}\rho_i^\eps\rho_j^\eps
	|v_i^\eps-v_j^\eps|^2 dxds \nonumber \\
	&\le \E_{\rm tot}(\brho^\eps,\bm{m}^\eps|\bar\brho,\bar{\bm{m}})(0)
	- \int_0^t\int_{\R^3}\sum_{i=1}^n\rho_i^\eps(v_i^\eps-\bar v)
	\otimes(v_i^\eps-\bar v):\na\bar v dxds \nonumber \\
	&\phantom{xx}{}- \int_0^t\int_{\R^3}\sum_{i=1}^n\Big(s_i(\rho_i^\eps,\na\rho_i^\eps
	|\bar\rho_i,\na\bar\rho_i)\diver\bar v
	+ r_i(\rho_i^\eps,\na\rho_i^\eps|\bar\rho_i,\na\bar\rho_i)
	\cdot\na\diver\bar v \nonumber \\
	&\phantom{xx}{}+ H_i(\rho_i^\eps,\na\rho_i^\eps
	|\bar\rho_i,\na\bar\rho_i):\na\bar v \Big)dxds \nonumber\\
	&\phantom{xx}{}- \int_0^t\int_{\R^3}\sum_{i=1}^n \rho_i^\eps(v_i^\eps-\bar v)
	\cdot\bigg(\frac{\diver\bar S}{\bar\rho} - \frac{\diver \bar S_i}{\bar\rho_i}
	\bigg)dxds. \label{4.rei2}
\end{align}
\end{proposition}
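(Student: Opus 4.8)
The plan is to reduce the proof to the machinery already developed for Proposition~\ref{prop.rei}, whose argument in fact only uses the \emph{reformulated} form \eqref{3.rhoeps2}--\eqref{3.rhoveps2} of the target system together with a remainder term, and never the constitutive relation \eqref{3.u}. The key preliminary step is therefore to cast the limiting system \eqref{4.bar1}--\eqref{4.bar2} in that format. Setting $\bar v_i := \bar v$ for all $i=1,\ldots,n$, the friction term $\sum_{j}b_{ij}\bar\rho_i\bar\rho_j(\bar v_i-\bar v_j)$ vanishes identically, and a direct computation --- expanding $\pa_t(\bar\rho_i\bar v)+\diver(\bar\rho_i\bar v\otimes\bar v)$ with \eqref{4.bar1}, substituting $(\pa_t+\bar v\cdot\na)\bar v=\diver\bar S/\bar\rho$ (which follows from \eqref{4.bar2} and $\pa_t\bar\rho+\diver(\bar\rho\bar v)=0$), and recalling $\diver S_i=-\rho_i\na(\delta\E/\delta\rho_i)$ --- yields
\begin{equation*}
  \pa_t(\bar\rho_i\bar v)+\diver(\bar\rho_i\bar v\otimes\bar v)
  =\diver\bar S_i - \frac1\eps\sum_{j=1}^n b_{ij}\bar\rho_i\bar\rho_j(\bar v_i-\bar v_j)
  +\bar R_i,\qquad
  \bar R_i:=\bar\rho_i\Big(\frac{\diver\bar S}{\bar\rho}-\frac{\diver\bar S_i}{\bar\rho_i}\Big),
\end{equation*}
with $\bar S_i=S_i(\bar\brho)$ and $\sum_{i=1}^n\bar R_i=0$. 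Assumption~\ref{A6} is precisely what guarantees that $\bar R_i/\bar\rho_i$ (which, through $\diver\bar S$, involves up to three spatial derivatives of $\bar\rho$) and the coefficients $\na\bar v$, $\na\diver\bar v$ entering the relative stresses all lie in $L^\infty([0,T]\times\R^3)$.

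With this reformulation, the proof is the repetition, line by line, of the argument for Proposition~\ref{prop.rei} under the substitutions $\widehat\rho_i\mapsto\bar\rho_i$, $\widehat v\mapsto\bar v$, $\widehat u_i\mapsto 0$, $\widehat v_i\mapsto\bar v$, $\widehat R_i\mapsto\bar R_i$. First, the dissipative weak solution inequality \eqref{3.dissweak}, tested with the Lipschitz cutoff $\theta$ of \eqref{3.theta} and passing $\delta\to0$, gives \eqref{eq:Energy}. Second, multiplying the nonconservative form of the reformulated momentum equation by $\bar\rho_i\bar v$, the mass equation by $\tfrac12|\bar v|^2$, summing over $i$ and using $\pa_t\bar\rho+\diver(\bar\rho\bar v)=0$ produces the smooth-solution energy identity, the analogue of \eqref{eq:Energy2}, whose friction term now vanishes because $\bar v_i-\bar v_j=0$. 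Third, one computes $-\sum_i\langle\delta\E_{\rm tot}/\delta\rho_i(\bar\brho,\bar{\bm m}),\rho_i^\eps-\bar\rho_i\rangle$ and $-\sum_i\langle\delta\E_{\rm tot}/\delta m_i(\bar\brho,\bar{\bm m}),\rho_i^\eps v_i^\eps-\bar\rho_i\bar v\rangle$ by inserting $\psi_i=\theta(\pa\bar F_i/\pa\rho_i-\diver\pa\bar F_i/\pa q_i-\tfrac12|\bar v|^2)$ and $\phi_i=\theta\bar v$ into the weak formulation of the difference of the two systems; the groups corresponding to $I_{11}+I_{12}$ and $I_{13}+I_2$ are handled exactly as before, using the identity \eqref{3.T1} relating the second derivatives of $F_i$ to the variational derivative of $S_i$ and the nonconservative momentum equation for $\bar v$. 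In the friction bookkeeping (Step~5 of the proof of Proposition~\ref{prop.rei}) every factor $\bar v_i-\bar v_j$ is now zero, so the $\rho_i^\eps\bar\rho_j$ corrections and the last term of \eqref{3.rei} drop out, leaving only $\tfrac{1}{2\eps}\int_0^t\int_{\R^3}\sum_{i,j}b_{ij}\rho_i^\eps\rho_j^\eps|v_i^\eps-v_j^\eps|^2\,dxds$ with the correct sign. Finally, subtracting the smooth-solution energy identity and this cross-term identity from \eqref{eq:Energy}, the $\int_0^t\int_{\R^3}\sum_i\bar R_i\cdot\bar v$ contributions cancel and what remains is exactly \eqref{4.rei2}; assumption~\ref{A1} makes the left-hand side, the initial data and the boundary terms finite and justifies $\delta\to0$.

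I expect the only genuinely delicate point to be the one already present in Proposition~\ref{prop.rei}, namely the identity \eqref{3.T1}: showing that the higher-order terms generated by differentiating in time the nonconservative Korteweg potential $\pa\bar F_i/\pa\rho_i-\diver\pa\bar F_i/\pa q_i$ reorganize, after integration by parts and the use of $\diver S_i=-\rho_i\na(\delta\E/\delta\rho_i)$, into the relative stresses $s_i(\cdot|\cdot)$, $r_i(\cdot|\cdot)$, $H_i(\cdot|\cdot)$ of $S_i$. Since that computation does not involve the friction term at all, it transfers verbatim from the proof of Proposition~\ref{prop.rei}, so the only new content here is the identification of $\bar R_i$ above and the observation that the friction terms of the limiting system collapse; beyond careful bookkeeping there is no further obstacle.
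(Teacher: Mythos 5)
Your proposal is correct and follows essentially the same route as the paper: the authors likewise recast \eqref{4.bar1}--\eqref{4.bar2} in the form \eqref{3.rhoeps2}--\eqref{3.rhoveps2} with $\bar v_i=\bar v$, identify the remainder $\bar R_i=\frac{\bar\rho_i}{\bar\rho}\diver\bar S-\diver\bar S_i$, and then invoke the argument of Proposition~\ref{prop.rei}, noting that the friction terms of the limiting system and the last term of \eqref{3.rei} vanish since $\bar v_i-\bar v_j=0$. Your identification of $\bar R_i$ and the resulting form of the final term in \eqref{4.rei2} agree exactly with the paper's.
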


\begin{proof}
The calculation is similar to the proof of Proposition \ref{prop.rei}. We can replace 
$\widehat\rho_i^\eps$, $\widehat v_i^\eps$ by $\bar \rho_i$, $\bar v_i$ in 
\eqref{3.rei}. To obtain the relative energy inequality, we need further to write 
the equation for $\bar\rho_i \bar v$ into the same form as \eqref{3.rhoveps2} 
and replace $\widehat R_i^\eps$ by $\bar{R}_i$, which is given by
\begin{align*}
	\partial_t(\bar\rho_i \bar v) + \diver(\bar \rho_i \bar v \otimes \bar v) 
	= - \bar{\rho}_i \nabla \frac{\delta \E}{\delta\bar \rho_i}(\bar\brho)  
	- \frac{1}{\eps} \sum_{j=1}^n b_{ij} \bar\rho_i\bar\rho_j(\bar v - \bar v) 
	+ \bar{R}_i.
\end{align*}
Using \eqref{4.bar1} and \eqref{4.bar2}, $\bar{R}_i$ can be calculated as
\begin{align*}
	\bar{R}_i &= (\partial_t \bar\rho_i + \diver(\bar\rho_i\bar v))\cdot \bar v 
	+ \bar \rho_i(\pa_t\bar v+\bar v \cdot \na\bar v) 
	+ \bar\rho_i \nabla \frac{\delta \E}{\delta\bar \rho_i}\\
	&=\frac{\bar\rho_i}{\bar \rho}(\pa_t(\bar\rho \bar v) + \na \cdot (\bar\rho 
	\bar v\otimes \bar v)) + \bar\rho_i \nabla \frac{\delta \E}{\delta\bar \rho_i} \\
	& = \frac{\bar\rho_i}{\bar\rho} \diver \bar{S} - \diver \bar{S}_i.
\end{align*}
Replacing $\widehat R_i^\eps$ with the above equation, \eqref{3.rei} becomes  
\eqref{4.rei2}. Notice that $\widehat v_i^\eps-\widehat v_j^\eps$ 
reduces to $\bar v - \bar v=0$ and the last term in \eqref{3.rei} vanishes.
\end{proof}

\begin{theorem}[Relaxation limit in the Korteweg case]\label{thm.korte}
Let $(\brho^\eps,\bm{v}^\eps)$ be a dissipative weak solution to
\eqref{4.rho}-\eqref{4.rhov} satisfying \ref{A1} on page \pageref{A1} and
\ref{A5} on page \pageref{A5} and let $(\bar\brho,\bar v)$
be a strong solution to \eqref{4.bar1}-\eqref{4.bar2} satisfying \ref{A6} on page
\pageref{A6}.
Suppose that for some constants $K>\kappa>0$, we have the uniform bounds
$\kappa\le\rho_i^\eps(x,t)\le K$ and 
$\bar\rho_i(x,t)\ge\kappa$ for all $(x,t)\in\R^3\times(0,T)$ and $i=1,\ldots,n$. 
Furthermore, let assumption \ref{N} hold.
We fix $T>0$ and set, as in Theorem \ref{thm.convCE}, 
$$
  \chi(t) = \int_{\R^3}\sum_{i=1}^n\bigg(
	\frac12\rho_i^\eps|v_i^\eps-\bar v|^2 
	+ (\rho_i^\eps-\bar\rho_i)^2+
	\frac{1}{2\kappa_i(\rho_i^\eps)}|\kappa_i(\rho_i^\eps)\nabla \rho_i^\eps 
	- \kappa_i(\bar\rho_i) \na\bar \rho_i|^2\bigg)(t)dx.
$$
Then there exists a constant $C>0$ such that for all $\eps>0$ and $t\in(0,T)$,
$$
  \chi(t) \le C(\chi(0)+\eps), \quad t\in(0,T).
$$
In particular, if $\chi(0)\to 0$ as $\eps\to 0$, we have
$$
  \sup_{t\in(0,T)}\chi(t) \to 0\quad\mbox{as }\eps\to 0.
$$
\end{theorem}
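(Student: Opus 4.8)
The plan is to run the relative-energy argument of Theorem \ref{thm.convCE} starting from the inequality \eqref{4.rei2} of Proposition \ref{prop.rei2}, keeping track of the fact that the zeroth-order target \eqref{4.bar1}--\eqref{4.bar2} is only an $O(\eps)$ approximation rather than $O(\eps^2)$. First I would establish the two-sided relation between the relative total energy and $\chi$: exactly as in the proof of Theorem \ref{thm.convCE}, the relative-potential identities \eqref{eq:relativeF}--\eqref{eq:kq2} together with assumption \ref{A4} and the bounds $\kappa\le\rho_i^\eps,\bar\rho_i\le K$ (the upper bound on $\bar\rho_i$ coming from \ref{A6}) give
$$
  C_1\chi(t)\le \E_{\rm tot}(\brho^\eps,\bm{m}^\eps|\bar\brho,\bar{\bm{m}})(t),\qquad
	\E_{\rm tot}(\brho^\eps,\bm{m}^\eps|\bar\brho,\bar{\bm{m}})(0)\le C_2\chi(0).
$$

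Next I would bound the first three integrals on the right-hand side of \eqref{4.rei2}. The kinetic term is $\le C\int_0^t\chi\,ds$ using $\|\na\bar v\|_{L^\infty}\le C$ from \ref{A6}; the stress terms are controlled by the relative-stress estimates \eqref{eq:sest}, \eqref{eq:rest}, \eqref{eq:Hest} (valid since \ref{A4}, \ref{A5} and $\kappa\le\bar\rho_i\le K$ hold), combined with $\|\diver\bar v\|_{L^\infty}+\|\na\diver\bar v\|_{L^\infty}+\|\na\bar v\|_{L^\infty}\le C$, so that they too are dominated by $C\int_0^t\chi\,ds$.

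The core of the proof is the last integral of \eqref{4.rei2},
$$
  J=-\int_0^t\!\!\int_{\R^3}\sum_{i=1}^n\rho_i^\eps(v_i^\eps-\bar v)\cdot\Big(\frac{\diver\bar S}{\bar\rho}-\frac{\diver\bar S_i}{\bar\rho_i}\Big)dxds
	=-\int_0^t\!\!\int_{\R^3}\sum_{i=1}^n\rho_i^\eps(v_i^\eps-\bar v)\cdot\frac{\bar d_i}{\bar\rho_i}\,dxds,
$$
where $\bar d_i:=\bar\rho_i\na\frac{\delta\E}{\delta\rho_i}(\bar\brho)-\frac{\bar\rho_i}{\bar\rho}\sum_{j}\bar\rho_j\na\frac{\delta\E}{\delta\rho_j}(\bar\brho)$ is the zeroth-order analogue of \eqref{2.deps}, satisfies $\sum_{i}\bar d_i=0$, and has $\bar d_i/\bar\rho_i\in L^\infty$ by \ref{A6}. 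I would write $v_i^\eps-\bar v=u_i^\eps+(v^\eps-\bar v)$ with $v^\eps:=\rho^{\eps,-1}\sum_j\rho_j^\eps v_j^\eps$ and $u_i^\eps:=v_i^\eps-v^\eps$, so that $\sum_i\rho_i^\eps u_i^\eps=0$. For the $(v^\eps-\bar v)$-part I use $\sum_i\bar d_i=0$ to replace $\sum_i(\rho_i^\eps/\bar\rho_i)\bar d_i$ by $\sum_i\frac{\rho_i^\eps-\bar\rho_i}{\bar\rho_i}\bar d_i$, and then $\int|v^\eps-\bar v|^2dx\le\kappa^{-1}\int\rho^\eps|v^\eps-\bar v|^2dx\le C\chi$ together with $\int|\rho_i^\eps-\bar\rho_i|^2dx\le\chi$ bounds it by $C\int_0^t\chi\,ds$. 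For the $u_i^\eps$-part I invoke the coercivity \eqref{4.h7} proved within Theorem \ref{thm.isen} (which rests on \ref{N} and $\rho_i^\eps\ge\kappa$), giving $\sum_i\rho_i^\eps|u_i^\eps|^2\le C\sum_{j,k}b_{jk}\rho_j^\eps\rho_k^\eps|v_j^\eps-v_k^\eps|^2$; with Cauchy--Schwarz and \ref{A1} this yields $\int\sum_i|\rho_i^\eps u_i^\eps|\,dx\le C\big(\int\sum_{j,k}b_{jk}\rho_j^\eps\rho_k^\eps|v_j^\eps-v_k^\eps|^2dx\big)^{1/2}$, and a further Cauchy--Schwarz in time plus Young's inequality produce a bound $\tfrac12 D+C\eps$, where $D:=\frac{1}{2\eps}\int_0^t\!\int_{\R^3}\sum_{j,k}b_{jk}\rho_j^\eps\rho_k^\eps|v_j^\eps-v_k^\eps|^2dxds$ is precisely the dissipation on the left of \eqref{4.rei2}.

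Collecting the estimates, $\tfrac12 D$ is absorbed on the left-hand side and one obtains $\chi(t)\le C\chi(0)+C\eps+C\int_0^t\chi(s)\,ds$, whence $\chi(t)\le C(\chi(0)+\eps)$ by Gronwall. The main obstacle is exactly the term $J$: in Theorem \ref{thm.convCE} the quantity $\widehat v_i^\eps-\widehat v_j^\eps=\widehat u_i^\eps-\widehat u_j^\eps$ is $O(\eps)$, which forces all residuals to be $O(\eps^2)$, whereas here the analogous residual $\bar d_i$ is only $O(1)$; one must trade one power of $\eps$ against the strong $O(1/\eps)$ friction dissipation through the coercivity bound $\sum_i|u_i^\eps|^2\lesssim\sum_{j,k}b_{jk}\rho_j^\eps\rho_k^\eps|v_j^\eps-v_k^\eps|^2$, which is the reason the convergence rate degrades from $\eps^2$ to $\eps$.
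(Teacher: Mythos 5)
Your proposal is correct and follows essentially the same route as the paper's proof: the same decomposition of the residual term into a relative-velocity part (traded against the $O(1/\eps)$ friction dissipation via the coercivity \eqref{4.h7}, costing one power of $\eps$) and a barycentric part absorbed into $C\int_0^t\chi\,ds$, followed by Gronwall. The only differences are cosmetic (order of Cauchy--Schwarz/Young applications).
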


\begin{proof}
We estimate the integrals on the right-hand side of the relative entropy
inequality \eqref{4.rei2}. The second and third terms can be estimated in the same 
way as \eqref{eq:J1} and \eqref{eq:J2}, and they are bounded by $C\int_0^t \chi(s) ds$. 
We split the last term on the right-hand side of \eqref{4.rei2} into two parts:
$$
  -\int_0^t\int_{\R^3}\sum_{i=1}^n\rho_i^\eps(v_i^\eps-\bar v)\cdot
	\bigg(\frac{\diver \bar S}{\bar\rho} - \frac{\diver \bar S_i}{\bar\rho_i}\bigg)dxds
	= L_1 + L_2,
$$
where
\begin{align*}
  L_1 &= -\int_0^t\int_{\R^3}\sum_{i=1}^n \rho_i^\eps(v_i^\eps-v^\eps)
	\cdot\bigg(\frac{\diver \bar S}{\bar\rho} 
	- \frac{\diver \bar S_i}{\bar\rho_i}\bigg)dxds, \\
	L_2 &= -\int_0^t\int_{\R^3}\sum_{i=1}^n \rho_i^\eps(v^\eps-\bar v)
  \cdot\bigg(\frac{\diver \bar S}{\bar\rho} 
	- \frac{\diver \bar S_i}{\bar\rho_i}\bigg)dxds.
\end{align*} 
We infer that
\begin{align*}
  L_1 &= \int_0^t\int_{\R^3}\sum_{i=1}^n \rho_i^\eps(v_i^\eps-v^\eps)
	\frac{\diver \bar S_i}{\bar\rho_i}dxds \\
	&\le \frac{\nu}{2\eps}\int_0^t\int_{\R^3}\sum_{i=1}^n(\rho_i^\eps)^2
	|v_i^\eps-v^\eps|^2 dxds
	+ C\eps\int_0^t\int_{\R^3}\sum_{i=1}^n
	\bigg(\frac{\diver \bar S_i}{\bar\rho_i}\bigg)^2 dxds \\
  &\le \frac{\nu}{2\eps}\int_0^t\int_{\R^3}\sum_{i=1}^n (\rho_i^\eps)^2
	|v_i^\eps-v^\eps|^2 dxds + C\eps t.
\end{align*}
Using  \eqref{4.h7}, we conclude that
$$
  L_1 \le \frac{1}{4\eps}\int_0^t\int_{\R^3}\sum_{i,j=1}^n b_{ij}\rho_i^\eps\rho_j^\eps
	|v_i^\eps-v_j^\eps|^2 + C\eps t.
$$

To estimate $L_2$,  recall that $\bar S=\sum_{i=1}^n\bar S_i$
and $\rho^\eps=\sum_{i=1}^n\rho_i^\eps$, yielding
\begin{align}
  L_2 &= -\int_0^t\int_{\R^3}(v^\eps-\bar v)\cdot\sum_{i=1}^n\bigg(
	\frac{\rho^\eps}{\bar\rho}\diver \bar S_i - \frac{\rho_i^\eps}{\bar\rho_i}
	\diver\bar S_i\bigg)dxds \nonumber \\
	&= -\int_0^t\int_{\R^3}\sum_{i=1}^n\bigg(\frac{1}{\bar\rho}
	- \frac{\rho_i^\eps}{\bar\rho_i\rho^\eps}\bigg)\rho^\eps
	(v^\eps-\bar v)\cdot(\diver \bar S_i)dxds \nonumber \\
	&\le \int_0^t\int_{\R^3}\rho^\eps|v^\eps-\bar v|^2 dxds
	+ C\int_0^t\int_{\R^3}\rho^\eps\sum_{i=1}^n\bigg(\frac{1}{\bar\rho}
	- \frac{\rho_i^\eps}{\bar\rho_i\rho^\eps}\bigg)^2 dxds. \label{4.K2}
\end{align}
To estimate the first term on the right-hand side, 
we need the uniform lower and upper bounds for $\rho_i^\eps$:
\begin{align*}
  \rho^\eps|v^\eps-\bar v|^2
	&= \frac{1}{\rho^\eps}\bigg|\sum_{i=1}^n\rho_i^\eps(v_i^\eps-\bar v)\bigg|^2
  \le \frac{n}{\rho^\eps}\sum_{i=1}^n(\rho_i^\eps)^2|v_i^\eps-\bar v|^2
	\le \frac{nK}{\kappa}\sum_{i=1}^n\rho_i^\eps|v_i^\eps-\bar v|^2.
\end{align*}
The last term in \eqref{4.K2} can be estimated according to
$$
  \sum_{i=1}^n\bigg(\frac{1}{\bar\rho}
	- \frac{\rho_i^\eps}{\bar\rho_i\rho^\eps}\bigg)^2
	= \sum_{i=1}^n\bigg(\frac{\rho^\eps-\bar\rho}{\rho^\eps\bar\rho}
	+ \frac{\bar\rho_i-\rho_i^\eps}{\rho^\eps\bar\rho_i}\bigg)^2 
	\le C\sum_{i=1}^n(\rho_i^\eps-\bar\rho_i)^2.
$$
Therefore,
$$
  L_2 \le C\int_0^t\int_{\R^3}\sum_{i=1}^n\big(\rho_i^\eps|v_i^\eps-\bar v|^2
	+ (\rho_i^\eps-\bar\rho_i)^2\big)dxds \le C\int_0^t\chi(s)ds.
$$

Finally, as in the proof of Theorem \ref{thm.convCE},
$\E_{\rm tot}(\brho^\eps,\bm{m}^\eps|\bar\brho,\bar{\bm{m}})(t)\ge C\chi(t)$.
We conclude that
\begin{equation}\label{4.estchi}
  \chi(t) + \frac{1}{4\eps}\int_0^t\int_{\R^3}\sum_{i=1}^n b_{ij}\rho_i^\eps\rho_j^\eps
	|v_i^\eps-v_j^\eps|^2 dxds \le \chi(0) + C\int_0^t\chi(s)ds + C\eps t.
\end{equation}
An application of Gronwall's lemma then finishes the proof. 
\end{proof}

\begin{remark}\rm
In the previous proof, the interaction term involving $b_{ij}$ was crucial to
estimate the term $L_1$. The symmetry of $(b_{ij})$ enables us to control the
kinetic energy by the interaction energy,
$$
  \int_{\R^3}\sum_{i=1}^n\rho_i^2|v_i-v|^2dx 
	\le \frac{1}{2\nu}\int_{\R^3}\sum_{i=1}^n b_{ij}\rho_i\rho_j|v_i-v_j|^2.
$$
In the single component case, the interaction energy vanishes, and we 
recover Theorem 3 in \cite{GLT17}. 
\end{remark}


\end{document}